\title{Complex moment-based methods\\for differential eigenvalue problems\thanks{This work was supported in part by the Japan Society for the Promotion of Science (JSPS), Grants-in-Aid for Scientific Research (Nos.~JP18K13453, JP19KK0255, JP20K14356, and JP21H03451).}}
\author{Akira Imakura\thanks{Faculty of Engineering, Information and Systems, University of Tsukuba, 1-1-1 Tennodai, Tsukuba, Ibaraki 305-8573, Japan} \thanks{\texttt{imakura@cs.tsukuba.ac.jp}} \and Keiichi Morikuni\footnotemark[2] \thanks{\texttt{morikuni@cs.tsukuba.ac.jp}} \and Akitoshi Takayasu\footnotemark[2] \thanks{\texttt{takitoshi@risk.tsukuba.ac.jp}}}
\date{}
\theoremstyle{plain}
\newtheorem{theorem}{Theorem}[section]
\newtheorem{remark}{Remark}[section]
\numberwithin{equation}{section}
\begin{document}
\maketitle
\begin{abstract}
This paper considers computing partial eigenpairs of differential eigenvalue problems (DEPs) such that eigenvalues are in a certain region on the complex plane.
Recently, based on a ``solve-then-discretize'' paradigm, an operator analogue of the FEAST method has been proposed for DEPs without discretization of the coefficient operators.
Compared to conventional ``discretize-then-solve'' approaches that discretize the operators and solve the resulting matrix problem, the operator analogue of FEAST exhibits much higher accuracy; however, it involves solving a large number of ordinary differential equations (ODEs).
In this paper, to reduce the computational costs, we propose operation analogues of Sakurai--Sugiura-type complex moment-based eigensolvers for DEPs using higher-order complex moments and analyze the error bound of the proposed methods.
We show that the number of ODEs to be solved can be reduced by a factor of the degree of complex moments without degrading accuracy, which is verified by numerical results.
Numerical results demonstrate that the proposed methods are over five times faster compared with the operator analogue of FEAST for several DEPs while maintaining almost the same high accuracy.
This study is expected to promote the ``solve-then-discretize'' paradigm for solving DEPs and contribute to faster and more accurate solutions in real-world applications.
\end{abstract}
\textbf{Keywords}: differential eigenvalue problem, solve-then-discretize paradigm, higher-order complex moments, ordinary differential equations, error bounds\\[3mm]
\section{Introduction}
This paper considers solving differential eigenvalue problems (DEPs)
\begin{equation}
	\mathcal{A} u_i = \lambda_i \mathcal{B} u_i, \quad \lambda_i \in \Omega \subset \mathbb{C}
	\label{eq:gep}
\end{equation}
with boundary conditions, where $\mathcal{A}$ and $\mathcal{B}$ are linear, ordinary differential operators acting on functions from a Hilbert space $\mathcal{H}$ and $\Omega$ is a prescribed simply connected open set.
This type of problems appears in various fields such as physics~\cite{polizzi2009density,kestyn2016pfeast} and materials science~\cite{iwase2017efficient,huang2021efficient, kurz2020solving}.
Here, $\lambda_i$ and $u_i$ is an eigenvalue and the corresponding eigenfunction, respectively.
We assume that the boundary $\Gamma$ of $\Omega$ is a rectifiable, simple closed curve and that the spectrum of \eqref{eq:gep} is discrete and does not intersect $\Gamma$, while only $m$ finite eigenvalues counting multiplicities are in $\Omega$.
We also assume that there are eigenfunctions of \eqref{eq:gep} that form a basis for the invariant subspace associated with $\lambda_i \in \Omega$.
\par
A conventional way to solve \eqref{eq:gep} is to discretize the operators $\mathcal{A}$ and $\mathcal{B}$ and solve the resulting matrix eigenvalue problem using some matrix eigensolver, e.g., the QZ and Krylov subspace methods \cite{chatelin2012eigenvalues}.
Fine discretization can reduce discretization error but lead to the formation of large matrix eigenvalue problems.
Owing to parallel efficiency, complex moment-based eigensolvers are practical choices for large eigenvalue problems such as Sakurai--Sugiura' s approach~\cite{sakurai2003projection} and FEAST eigensolvers~\cite{polizzi2009density}.
This class of eigensolvers constructs an approximation of the target invariant subspace using a contour integral and computes an approximation of the target eigenpairs using a projection onto the subspace.
Because of the high efficiency of parallel computation of the contour integral \cite{kestyn2016pfeast,iwase2017efficient}, which is the most time-consuming part, complex moment-based eigensolvers have attracted considerable attention.
\par
In contrast to the above ``discretize-then-solve'' paradigm, a ``solve-then-discretize'' paradigm emerged, motivated by mathematical software Chebfun~\cite{driscoll2014chebfun}.
Chebfun enables highly adaptive computation with operators and functions in the same manner as matrices and functions.
This paradigm has extended numerical linear algebra techniques in finite dimensional spaces to infinite-dimensional spaces~\cite{battles2004extension,trefethen2010householder,olver2014practical,townsend2015continuous,gilles2019continuous,mohr2021full}.
Under the circumstances, an operator analogue of FEAST was recently developed~\cite{horning2020feast} for solving \eqref{eq:gep} and dealing with operators~$\mathcal{A}$ and $\mathcal{B}$ without their discretization\footnote{An algorithm of a FEAST-like eigensolver for solving DEPs \eqref{eq:gep} without the discretization of the operators $\mathcal{A}$ and $\mathcal{B}$ was demonstrated in 2013 in the online document of Chebfun \cite{driscoll2014chebfun}.}. 
On one hand, the operator analogue of FEAST exhibits much higher accuracy than methods based on the traditional ``discretize-then-solve'' paradigm.
On the other hand, a large number of ordinary differential equations (ODEs) must be solved for the construction of invariant subspaces, which is computationally expensive, although the method can be efficiently parallelized.
\par
In this paper, we propose operation analogues of Sakurai--Sugiura's approach for DEPs~\eqref{eq:gep} in the ``solve-then-discretize'' paradigm.
The difference between the operator analogue of FEAST and the proposed methods lies in the order of complex moments used: the operator analogue of FEAST used only complex moments of order zero, whereas the proposed methods use complex moments of higher order.
The difference enables the proposed methods to reduce the number of ODEs to be solved by a factor of the degree of complex moments without degrading accuracy.
The proposed methods can be extended to higher dimensions in a straightforward manner for simple geometries.
\par
The remainder of this paper is organized as follows.
Section~\ref{sec:matrix_solvers} briefly introduces the complex moment-based matrix eigensolvers.
In Section~\ref{sec:propose}, we propose operation analogues of Sakurai--Sugiura's approach for DEPs~\eqref{eq:gep}.
We also introduce a subspace iteration technique and analyze an error bound.
Numerical experiments are reported in Section~\ref{sec:experiment}.
The paper concludes with Section~\ref{sec:conclusion}.
\par
We use the following notations for quasi-matrices. 
Let $V = [v_1, v_2, \dots, v_L]$, $W = \break [w_1, w_2, \dots ,w_L]$: $\mathbb{C}^L \rightarrow \mathcal{H}$ be quasi-matrices, whose columns are functions defined on an interval $[a, b]$, $a, b \in \mathbb{R}$.
Then, we define the range of $V$ by $\mathscr{R}(V) = \{ y \in \mathcal{H} \mid y = V {\bm x}, {\bm x} \in \mathbb{C}^L \}$.
	In addition, the $L \times L$ matrix~$X$, whose $(i, j)$ element is $X_{ij} = (v_i, w_j)_{\mathcal{H}}$, is expressed as $X = V^\mathsf{H} W$.
Here, $V^\mathsf{H}$ is the conjugate transpose of a quasi-matrix~$V$ such that its rows are the complex conjugates of functions $v_1, v_2, \dots , v_L$.
\section{Complex moment-based matrix eigensolvers}
\label{sec:matrix_solvers}
The complex moment-based eigensolvers proposed by Sakurai and Sugiura \cite{sakurai2003projection} are intended for solving matrix generalized eigenvalue problems:
\begin{align*}
	A {\bm x}_i = \lambda_i B {\bm x}_i, \quad
	A, B \in \mathbb{C}^{n \times n}, \quad
	{\bm x}_i \in \mathbb{C}^n \setminus \{ {\bm 0} \}, \quad
	\lambda_i \in \Omega \subset \mathbb{C},
	\label{eq:gep}
\end{align*}
where $zB-A$ is nonsingular in a boundary $\Gamma$ of the target region $\Omega$.
These eigensolvers use Cauchy's integral formula to form complex moments.
Complex moments can extract the target eigenpairs from random vectors or matrices.
\par
We denote the $k$th order complex moment by
\begin{equation*}
	M_k = \frac{1}{2 \pi \mathrm{i}} \oint_\Gamma z^k (zB-A)^{-1} B \mathrm{d} z,
\end{equation*}
where $\pi$ is the circular constant, $\mathrm{i}$ is the imaginary unit, and $\Gamma$ is a positively oriented closed Jordan curve of which $\Omega$ is the interior.
Then, the complex moment~$M_k$ applied to a matrix~$V \in \mathbb{C}^{n \times L}$ serves as a filter that stops undesired eigencomponents in the column vectors of $V$ from passing through.
To achieve this role of a complex moment, we introduce a transformation matrix~$S \in \mathbb{C}^{n \times LM}$
\begin{equation}
	S = [S_0, S_1, \dots, S_{M-1}] , \quad S_k = M_k V,
	\label{eq:set_S}
\end{equation}
where $V \in \mathbb{C}^{n \times L}$ and $M-1$ is the largest order of complex moments.
Note that $L$ and $M$ are regarded as parameters.
The special case~$M = 1$ in $S$ reduces to FEAST~\cite[equation~(3)]{polizzi2009density}.
Thus, the range $\mathscr{R}(S)$ of $S$ forms the eigenspace of interest (see e.g., \cite[Theorem~1]{imakura2016error}).
\par
Practical algorithms of the complex moment-based eigensolvers approximate the contour integral of the transformation matrix~$\widehat{S}_k \simeq S_k$ of \eqref{eq:set_S} using a quadrature rule
\begin{equation*}
	\widehat{S}_k = \sum_{j=1}^N \omega_j z_j^k (z_j B - A)^{-1} BV,
\end{equation*}
%
%where $z_j$ is a quadrature point and $\omega_j$ is its corresponding weight.
where $z_j, \omega_j \in \mathbb{C}$ $(j = 1, 2, \dots, N)$ are quadrature points and the corresponding weights, respectively.
\par
The most time-consuming part of complex moment-based eigensolvers involves solving linear systems at each quadrature point.
These linear systems can be independently solved so that the eigensolvers have good scalability, as demonstrated in \cite{kestyn2016pfeast,iwase2017efficient}.
For this reason, complex moment-based eigensolvers have attracted considerable attention, particularly in physics~\cite{polizzi2009density,kestyn2016pfeast}, materials science~\cite{iwase2017efficient,huang2021efficient,kurz2020solving}, power systems \cite{georgios2020comparison}, data science~\cite{imakura2019complex} and so on.
Currently, there are several methods, including direct extensions of Sakurai and Sugiura's approach \cite{sakurai2007cirr,ikegami2010filter,ikegami2010contour,imakura2014block,imakura2016relationships,imakura2017block,imakura2017structure}, the FEAST eigensolver \cite{polizzi2009density} developed by Polizzi, and its improvements \cite{tang2014feast,guttel2015zolotarev,kestyn2016pfeast}.
We refer to the study by \cite{imakura2016relationships} and the references therein, for relationship among typical complex moment-based methods: the methods using the Rayleigh--Ritz procedure \cite{sakurai2007cirr,ikegami2010contour}, the methods using Hankel matrices \cite{sakurai2003projection,ikegami2010filter}, the method using the communication avoiding Arnoldi procedure \cite{imakura2017block}, FEAST eigensolver \cite{polizzi2009density}, and so on.
\section{Complex moment-based methods}
\label{sec:propose}
In the ``solve-then-discretize'' paradigm, an operator analogue of the FEAST method was proposed \cite{horning2020feast} for solving \eqref{eq:gep} without requiring discretization of the operators $\mathcal{A}$ and $\mathcal{B}$.
The operator analogue of FEAST (contFEAST) is a simple extension of the matrix FEAST eigensolver and is based on an accelerated subspace iteration only with complex moments of order zero; see Algorithm~\ref{alg:feast}.
In each iteration, contFEAST requires solving a large number of ODEs to construct a subspace.
In this study, to reduce computational costs, we propose operator analogues of Sakurai--Sugiura-type complex moment-based eigensolvers: contSS-RR, contSS-Hankel, and contSS-CAA using complex moments of higher order.
\begin{algorithm}[t]
	\small
	\caption{contFEAST method}
	\label{alg:feast}
	\begin{algorithmic}[1]
		\Require $L, N \in \mathbb{N}, \delta \in \mathbb{R}, V : \mathbb{C}^{L} \rightarrow \mathcal{H}, (z_j, \omega_j)$ for $j = 1, 2, \dots, N$
		\Ensure Approximate eigenpairs $(\widehat\lambda_i, \widehat{u}_i)$ for $i = 1, 2, \ldots, L$
		\For{$\ell = 1, 2, \dots$}
		\State Compute $\widehat{S}_0 = \sum_{j=1}^{N} \omega_j (z_j \mathcal{B} -\mathcal{A} )^{-1} \mathcal{B} V$
		\State Compute QR factorization of $\widehat{S}_0$: $\widehat{S}_0 = \widehat{Q} \widehat{R}$
		\State Compute eigenpairs $(\theta_i, {\bm t}_i)$ of $\widehat{Q}^\mathsf{H} \mathcal{A} \widehat{Q} {\bm t}_i = \theta_i \widehat{Q}^\mathsf{H} \mathcal{B} \widehat{Q} {\bm t}_i$
		\Statex \qquad and compute $(\widehat\lambda_i, \widehat{u}_i) = (\theta_i, \widehat{Q} {\bm t}_i)$ for $i = 1, 2, \ldots, L$
		\State Set $V = [\widehat{u}_1, \widehat{u}_2, \dots, \widehat{u}_L]$
		\EndFor
	\end{algorithmic}
\end{algorithm}
\subsection{Complex moment subspace and its properties}
For the differential eigenvalue problem \eqref{eq:gep}, spectral projectors $\mathcal{P}_i$ and $\mathcal{P}_\Omega$ associated with a finite eigenvalue $\lambda_i$ and the target eigenvalues $\lambda_i \in \Omega$ are defined as
\begin{equation}
	\mathcal{P}_i = \frac{1}{2 \pi \textrm{i}} \oint_{\Gamma_i} (z \mathcal{B} - \mathcal{A})^{-1} \mathcal{B} \textrm{d}z, \quad
	\mathcal{P}_\Omega = \sum_{\lambda_i \in \Omega} \mathcal{P}_i =  \frac{1}{2 \pi \textrm{i}} \oint_{\Gamma} (z \mathcal{B} - \mathcal{A})^{-1} \mathcal{B} \textrm{d}z,
	\label{eq:defP}
\end{equation}
respectively, where $\Gamma_i$ is a positively oriented closed Jordan curve in which $\lambda_i$ lies and contour paths $\Gamma_i$ and $\Gamma_j$ do not intersect each other for $i \neq j$; see \cite[pp.178--179]{kato1995perturbation} for the case of $\mathcal{B}=\mathcal{I}$.
Here, spectral projectors $\mathcal{P}_i$ satisfy
\begin{equation*}
	\mathcal{P}_i \mathcal{P}_j = \delta_{ij} \mathcal{P}_i
\end{equation*}
where $\delta_{ij}$ is the Kronecker delta.
\par
Analogously to the complex moment-based eigensolvers for matrix eigenvalue problems, we define the $k$th order complex moment as
\begin{equation}
	\mathcal{M}_k = \frac{1}{2 \pi \textrm{i}} \oint_\Gamma z^k (z \mathcal{B} - \mathcal{A})^{-1} \mathcal{B} \textrm{d}z, \quad
	k = 1, 2, \ldots, M-1
	\label{eq:sk}
\end{equation}
and the transformation quasi-matrix as
\begin{equation}
	S = [S_0, S_1, \dots, S_{M-1}], \quad
	S_k = \mathcal{M}_k V
	\label{eq:defS}
\end{equation}
for $k = 0, 1, \dots, M-1$, where $M-1$ is the highest order of complex moments and $V: \mathbb{C}^{L} \rightarrow \mathcal{H}$ is a quasi-matrix.
Here, $L$ is a parameter.
Note the identity $\mathcal{P}_i = \mathcal{M}_0$ for $\Gamma = \Gamma_i$.
Then, the range $\mathscr{R}(S)$ has the following properties.
\begin{theorem}
	\label{thm:s}
	The columns of $S$ defined in \eqref{eq:defS} form a basis of the target eigenspace $\mathcal{X}_\Omega$ corresponding to $\Omega$, i.e.,
	\begin{equation}
		\mathscr{R}(S) = \mathcal{X}_\Omega = \mathscr{R} \left( \sum_{\lambda_i \in \Omega} \mathcal{P}_i \right),
		\label{eq:s=u_omega}
	\end{equation}
	if $\textrm{rank}(S) = m$, where $m$ is the number of eigenvalues, counting multiplicity, in $\Omega$ of \eqref{eq:gep}.
\end{theorem}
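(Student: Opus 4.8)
The plan is to establish the inclusion $\mathscr{R}(S)\subseteq\mathcal{X}_\Omega$ by writing $S$ in a factored form $S=PG$, where $P$ is a basis quasi-matrix of $\mathcal{X}_\Omega$ and $G$ is a constant matrix, and then to upgrade this inclusion to the asserted equality \eqref{eq:s=u_omega} by a rank--dimension argument that uses the hypothesis $\operatorname{rank}(S)=m$. This mirrors the matrix case \cite[Theorem~1]{imakura2016error}, with the eigendecomposition replaced by the spectral projectors in \eqref{eq:defP}.

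First I would evaluate the complex moments \eqref{eq:sk} by residues. Since the spectrum of \eqref{eq:gep} is discrete and disjoint from $\Gamma$, the contour $\Gamma$ can be deformed into the union of the small curves $\Gamma_i$ enclosing the eigenvalues $\lambda_i\in\Omega$, so that $\mathcal{M}_k=\sum_{\lambda_i\in\Omega}\frac{1}{2\pi\mathrm{i}}\oint_{\Gamma_i}z^k(z\mathcal{B}-\mathcal{A})^{-1}\mathcal{B}\,\mathrm{d}z$. Because, by assumption, the eigenfunctions associated with $\lambda_i\in\Omega$ form a basis of $\mathcal{X}_\Omega$, every such $\lambda_i$ is semisimple, so near $\lambda_i$ the operator $(z\mathcal{B}-\mathcal{A})^{-1}\mathcal{B}$ has a simple pole with residue $\mathcal{P}_i$ plus an analytic remainder; integrating the entire function $z^k$ against this local expansion gives $\mathcal{M}_k=\sum_{\lambda_i\in\Omega}\lambda_i^k\mathcal{P}_i$. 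In particular $\mathcal{M}_0=\mathcal{P}_\Omega$, in agreement with \eqref{eq:defP}, and $\mathcal{M}_k=\mathcal{M}_k\mathcal{P}_\Omega$ using $\mathcal{P}_i\mathcal{P}_j=\delta_{ij}\mathcal{P}_i$.

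Next I would fix a quasi-matrix $P=[u_1,\dots,u_m]:\mathbb{C}^m\to\mathcal{H}$ whose columns are eigenfunctions spanning $\mathcal{X}_\Omega$, repeated according to multiplicity, with eigenvalues $\mu_1,\dots,\mu_m$, so that $\mathcal{P}_i P=PE_i$ for the diagonal $0/1$ matrix $E_i$ selecting the columns of $P$ with eigenvalue $\lambda_i$. Since the columns of $\mathcal{P}_\Omega V$ lie in $\mathcal{X}_\Omega=\mathscr{R}(P)$, write $\mathcal{P}_\Omega V=PC$ with $C\in\mathbb{C}^{m\times L}$; then $S_k=\mathcal{M}_k V=\sum_{\lambda_i\in\Omega}\lambda_i^k\mathcal{P}_i P C=PD^kC$ with $D=\operatorname{diag}(\mu_1,\dots,\mu_m)$, and hence $S=P[C,DC,\dots,D^{M-1}C]=PG$ with $G\in\mathbb{C}^{m\times LM}$. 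This yields $\mathscr{R}(S)=\mathscr{R}(PG)\subseteq\mathscr{R}(P)=\mathcal{X}_\Omega$; moreover $P$ has full column rank, so $\operatorname{rank}(S)=\operatorname{rank}(G)\le m=\dim\mathcal{X}_\Omega$. When $\operatorname{rank}(S)=m$, the columns of $G$ span $\mathbb{C}^m$, whence $\mathscr{R}(S)=\mathscr{R}(PG)=\mathscr{R}(P)=\mathcal{X}_\Omega$, which is \eqref{eq:s=u_omega}.

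The step I expect to demand the most care is the residue evaluation of $\mathcal{M}_k$ in the operator setting: justifying the deformation of $\Gamma$ into the $\Gamma_i$ and, above all, the local expansion $(z\mathcal{B}-\mathcal{A})^{-1}\mathcal{B}=\mathcal{P}_i/(z-\lambda_i)+(\text{analytic})$ near each $\lambda_i\in\Omega$ --- in particular that $\mathcal{X}_\Omega$ is invariant under the pencil with a complementary invariant subspace, that $(z\mathcal{B}-\mathcal{A})^{-1}$ exists on a punctured neighborhood of each $\lambda_i$, and that the pole is simple, the last point being exactly the semisimplicity guaranteed by the basis-of-eigenfunctions assumption. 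Once these operator-theoretic facts are in hand, the remaining manipulations amount to finite-dimensional linear algebra on $\mathcal{X}_\Omega$ and should present no difficulty beyond bookkeeping.
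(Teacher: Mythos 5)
Your proposal is correct and follows essentially the same route as the paper's proof: both rest on the residue identity $\mathcal{M}_k=\sum_{\lambda_i\in\Omega}\lambda_i^k\mathcal{P}_i$ obtained from Cauchy's integral formula, followed by factoring $S$ through $\mathcal{X}_\Omega$ (the paper pulls out $\mathcal{P}_\Omega$, you write $S=PG$ with a basis quasi-matrix $P$) and invoking the hypothesis $\operatorname{rank}(S)=m$. Your version merely makes explicit what the paper compresses into one line --- the contour deformation, the semisimplicity ensuring a simple pole at each $\lambda_i$, and the rank--dimension argument via the constant matrix $G$ --- so it is a more detailed rendering of the same argument rather than a different one.
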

\begin{proof}
	Cauchy's integral formula shows
	\begin{equation*}
		\mathcal{M}_k = \sum_{\lambda_i \in \Omega} \lambda_i^k \mathcal{P}_i.
	\end{equation*}
	Therefore, from the definitions of $S$ and $S_k$, the quasi-matrix $S$ can be written as
	\begin{align*}
		S 
		&= \left[ \sum_{\lambda_i \in \Omega} \mathcal{P}_i V, \sum_{\lambda_i \in \Omega} \lambda_i \mathcal{P}_i V, \dots, \sum_{\lambda_i \in \Omega} \lambda_i^{M-1} \mathcal{P}_i V \right] \\
		&= \mathcal{P}_\Omega \sum_{\lambda_i \in \Omega} \left[ \mathcal{P}_i V, \lambda_i \mathcal{P}_i V, \dots, \lambda_i^{M-1} \mathcal{P}_i V \right],
	\end{align*}
	which provides \eqref{eq:s=u_omega} if $\textrm{rank}(S) = m$.
\end{proof}
	\begin{remark}
		Theorem~\ref{thm:s} shows that the target eigenpairs of \eqref{eq:gep} can be obtained by using a projection method onto $\mathscr{R}(S)$.
	\end{remark}
\begin{theorem}
	\label{thm:krylov}
	Let $S_0$ and $S$ be defined as in \eqref{eq:defS}.
	Then, the range $\mathscr{R}(S)$ and the block Krylov subspace
	\begin{equation*}
		\mathscr{K}_M(\mathcal{C},S_0) = \mathscr{R}([S_0, \mathcal{C}S_0, \dots, \mathcal{C}^{M-1}S_0])
	\end{equation*}
	are the same, i.e.,
	\begin{equation}
		\mathscr{R}(S) = \mathscr{K}_M(\mathcal{C},S_0),
		\label{eq:krylov}
	\end{equation}
	where
	\begin{equation*}
		\mathcal{C} = \sum_{\lvert \lambda_i \rvert < \infty} \lambda_i \mathcal{P}_i.
	\end{equation*}
	Here, $\mathcal{P}_i$ is defined in \eqref{eq:defP}.
	Moreover, the eigenvalue problem of linear operator $\mathcal{C}$ 
	\begin{equation}
		\mathcal{C} u_i = \lambda_i u_i
		\label{eq:sep}
	\end{equation}
	has the same finite eigenpairs as $\mathcal{A}u_i = \lambda_i\mathcal{B}u_i$.
\end{theorem}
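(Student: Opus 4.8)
The plan is to prove the statement in two stages: first the subspace identity \eqref{eq:krylov}, and then the coincidence of the finite eigenpairs of $\mathcal{C}$ with those of the pencil $(\mathcal{A},\mathcal{B})$. Both stages rely on two facts already at hand: the Cauchy-integral identity $\mathcal{M}_k = \sum_{\lambda_i\in\Omega}\lambda_i^k\mathcal{P}_i$ established in the proof of Theorem~\ref{thm:s}, which gives $S_k = \sum_{\lambda_i\in\Omega}\lambda_i^k\mathcal{P}_i V$, and the mutual orthogonality of the spectral projectors $\mathcal{P}_i\mathcal{P}_j = \delta_{ij}\mathcal{P}_i$ from \eqref{eq:defP}, which immediately yields $\mathcal{C}\mathcal{P}_i = \mathcal{P}_i\mathcal{C} = \lambda_i\mathcal{P}_i$ for each finite $\lambda_i$.

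For the first stage I would show, by induction on $k$, that $\mathcal{C}^k S_0 = S_k$ for $k = 0, 1, \dots, M-1$. The base case $k = 0$ is immediate. For the inductive step, $\mathcal{C}^{k+1}S_0 = \mathcal{C}S_k = \sum_{\lambda_i\in\Omega}\lambda_i^k(\mathcal{C}\mathcal{P}_i)V = \sum_{\lambda_i\in\Omega}\lambda_i^{k+1}\mathcal{P}_i V = S_{k+1}$. Here, although $\mathcal{C}$ is a sum over all finite eigenvalues, only the finitely many terms with $\lambda_i\in\Omega$ act nontrivially on $\mathscr{R}(S_0)\subseteq\mathscr{R}(\mathcal{P}_\Omega)$, so no convergence concern arises. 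Hence $[S_0, \mathcal{C}S_0, \dots, \mathcal{C}^{M-1}S_0] = [S_0, S_1, \dots, S_{M-1}] = S$ columnwise, so the ranges coincide and \eqref{eq:krylov} holds, with no rank assumption needed.

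For the second stage, let $(\lambda_i, u_i)$ be a finite eigenpair of $\mathcal{A}u_i = \lambda_i\mathcal{B}u_i$. Since $u_i$ lies in the invariant subspace $\mathscr{R}(\mathcal{P}_i)$, we have $\mathcal{P}_j u_i = \mathcal{P}_j\mathcal{P}_i u_i = \delta_{ij}u_i$, hence $\mathcal{C}u_i = \sum_{\lvert\lambda_j\rvert<\infty}\lambda_j\mathcal{P}_j u_i = \lambda_i u_i$, so every finite eigenpair of the pencil is an eigenpair of $\mathcal{C}$. Conversely, suppose $\mathcal{C}u = \mu u$ with $u\neq 0$. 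Applying $\mathcal{P}_k$ and using $\mathcal{P}_k\mathcal{C} = \lambda_k\mathcal{P}_k$ gives $(\lambda_k-\mu)\mathcal{P}_k u = 0$ for every finite $\lambda_k$, so $\mathcal{P}_j u = 0$ whenever $\lambda_j\neq\mu$; if $\mu$ coincides with a finite eigenvalue $\lambda_k\neq 0$, then $\mu u = \mathcal{C}u = \lambda_k\mathcal{P}_k u$ forces $u = \mathcal{P}_k u\in\mathscr{R}(\mathcal{P}_k)$, and the standing assumption that eigenfunctions of the pencil span each invariant subspace makes $u$ a genuine eigenfunction for $\lambda_k$. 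This gives the asserted correspondence.

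I expect the converse direction of the second stage to be the main obstacle: one must exclude the possibility that an eigenfunction of $\mathcal{C}$ is merely a generalized eigenfunction of $(\mathcal{A},\mathcal{B})$ — which is exactly what the diagonalizability hypothesis on the invariant subspaces is for — and one should note the degenerate eigenvalue $\mu = 0$ that $\mathcal{C}$ picks up on the part of $\mathcal{H}$ complementary to all finite invariant subspaces; this is vacuous when the pencil has no infinite eigenvalues (e.g.\ $\mathcal{B} = \mathcal{I}$) but deserves a remark in general. The first stage is, by contrast, a routine projector computation once $\mathcal{C}\mathcal{P}_i = \lambda_i\mathcal{P}_i$ is established.
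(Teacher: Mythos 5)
Your proof is correct and follows essentially the same route as the paper: the key step in both is the identity $S_k = \mathcal{C}^k S_0$, obtained from $\mathcal{M}_k = \sum_{\lambda_i\in\Omega}\lambda_i^k\mathcal{P}_i$ together with $\mathcal{P}_i\mathcal{P}_j=\delta_{ij}\mathcal{P}_i$ (the paper telescopes directly where you induct, a cosmetic difference). Your second stage is considerably more careful than the paper's one-line assertion that the eigenspaces coincide --- in particular your observations that the diagonalizability hypothesis is needed for the converse and that $\mathcal{C}$ acquires a spurious eigenvalue $\mu=0$ on the complement of the finite invariant subspaces are correct and worth keeping.
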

\begin{proof}
	The quasi-matrix $S_k$ is written as
	\begin{align*}
		S_k 
		&= \sum_{\lambda_i \in \Omega} \lambda_i^k \mathcal{P}_i V 
		= \left( \sum_{\lvert \lambda_i \rvert < \infty} \lambda_i \mathcal{P}_i \right) \sum_{\lambda_i \in \Omega} \lambda_i^{k-1} \mathcal{P}_i V  \\
		&= \left( \sum_{\lvert \lambda_i \rvert < \infty} \lambda_i \mathcal{P}_i \right)^k \sum_{\lambda_i \in \Omega} \mathcal{P}_i V 
		= \mathcal{C}^k S_0,
	\end{align*}
	which provides \eqref{eq:krylov}.
	Hence, the eigenspace of $\mathcal{C}$ and that of \eqref{eq:gep} are the same.
\end{proof}
	\begin{remark}
		Theorem~\ref{thm:krylov} shows that several techniques for block Krylov subspace can be used to form $\mathscr{R}(S)$ and the target eigenpairs of \eqref{eq:gep} can be obtained by solving \eqref{eq:sep}.
	\end{remark}
	\par
	Theorems~\ref{thm:s} and \ref{thm:krylov} are used to derive methods in Section~\ref{sec:derivation} and provide an error bound in Section~\ref{sec:errorbound}.
\subsection{Derivations of methods}
\label{sec:derivation}
Using Theorems \ref{thm:s} and \ref{thm:krylov}, based on the complex moment-based eigensolvers, SS-RR, SS-Hankel, and SS-CAA, we develop complex moment-based differential eigensolvers for solving \eqref{eq:gep} without the discretization of operators $\mathcal{A}$ and $\mathcal{B}$.
The proposed methods are projection methods based on $\mathscr{R}(S)$, which is a larger subspace than $\mathscr{R}(S_0)$ used in contFEAST (Algorithm~\ref{alg:feast}).
\par
In practice, we numerically deal with operators, functions, and the contour integrals.
The contour integral in \eqref{eq:sk} is approximated using the quadrature rule:
\begin{equation}
	\widehat{S} = [\widehat{S}_0, \widehat{S}_1, \dots, \widehat{S}_{M-1}], \quad
	\widehat{S}_k = \sum_{j=1}^N \omega_j z_j^k (z_j \mathcal{B} - \mathcal{A})^{-1} \mathcal{B} V,
	\label{eq:quadrature}
\end{equation}
where $z_j, \omega_j \in \mathbb{C}$ $(j = 1, 2, \dots, N)$ are quadrature points and the corresponding weights, respectively.
As well as contFEAST, we avoid discretizing the operators, but we construct polynomial approximations on the basis of the invariant subspace by approximately solving ODEs of the form
\begin{equation}
	(z_j \mathcal{B} - \mathcal{A}) y_{i,j} =  \mathcal{B} v_i, \quad
	i = 1, 2, \dots, L, \quad j = 1, 2, \dots, N
	\label{eq:ode}
\end{equation}
with boundary conditions.
Note that the number of ODEs to be solved does not depend on the degree of complex moments $M$.
\par
For real operators $\mathcal{A}$ and $\mathcal{B}$, if quadrature points and the corresponding weights are set symmetric about the real axis, $(z_j, \omega_j) = (\overline{z}_{j+N/2}, \overline{\omega}_{j+N/2}), j = 1,2,\dots, N/2$, we can halve the number of ODEs to be solved as follows:
\begin{equation}
	\widehat{S}_k = 2 \sum_{j=1}^{N/2} \textrm{Re}\left( \omega_j z_j^k (z_j \mathcal{B} - \mathcal{A})^{-1} \mathcal{B} V \right).
	\label{eq:sym}
\end{equation}
\par
As another efficient computation technique for real self-adjoint problems, we can avoid complex ODEs using the real rational filtering technique \cite{austin2015computing} for matrix eigenvalue problems.
Using the real rational filtering technique, quasi-matrix $S$ is approximated by \eqref{eq:quadrature} with the $N$ Chebyshev points of the first kind and the corresponding barycentric weights,
\begin{equation}
	z_j = \gamma + \rho \cos \left( \frac{(2j-1)\pi}{2N} \right), \quad
	\omega_j = (-1)^j \sin \left( \frac{(2j-1)\pi}{2N} \right),
	\label{eq:Chebyshev}
\end{equation}
for $j = 1,2, \dots, N$, where $\gamma$ and $\rho$ are the center and radius of the target interval.
Note that $z_j, \omega_j \in \mathbb{R}$ for $j = 1,2, \dots, N$.
\subsubsection{ContSS-RR method}
An operator analogue of the complex moment-based method using the Rayleigh--Ritz procedure for matrix eigenvalue problems \cite{sakurai2007cirr,ikegami2010contour} is presented.
Theorem~\ref{thm:s} shows that the target eigenpairs of \eqref{eq:gep} can be obtained by a Rayleigh--Ritz procedure based on $\mathscr{R}(S)$, i.e.,
\begin{equation*}
	S^\mathsf{H} \mathcal{A} S {\bm t}_i = \theta_i S^\mathsf{H} \mathcal{B} S {\bm t}_i,
\end{equation*}
where $(\lambda_i,u_i) = (\theta_i,S {\bm t}_i)$.
We approximate this Rayleigh--Ritz procedure using an $\mathcal{H}$-orthonormal basis of the approximated subspace $\mathscr{R}(\widehat{S})$.
Here, to reduce computational costs and improve numerical stability, we use a low-rank approximation of quasi-matrix $\widehat{S}$ based on its truncated singular value decomposition (TSVD) \cite{trefethen2010householder}, i.e.,
\begin{equation*}
	\widehat{S} = [U_\textrm{S1}, U_\textrm{S2}] \left[
	\begin{array}{cc}
		\Sigma_\textrm{S1} & O \\
		O & \Sigma_\textrm{S2}
	\end{array}
	\right] \left[
	\begin{array}{c}
		W_\textrm{S1}^\mathsf{H} \\
		W_\textrm{S2}^\mathsf{H}
	\end{array}
	\right] \approx
	U_\textrm{S1} \Sigma_\textrm{S1} W_\textrm{S1}^\mathsf{H},
\end{equation*}
where $\Sigma_\textrm{S1} \in \mathbb{R}^{d \times d}$ is a diagonal matrix whose diagonal entries are the $d$ largest singular values such that $\sigma_d / \sigma_1 \geq \delta \geq  \sigma_{d+1} / \sigma_1$ $(\sigma_i \geq \sigma_{i+1}, i = 1, 2, \dots, d)$ and $U_\textrm{S1}: \mathbb{C}^d \rightarrow \mathcal{H}$ and $W_\textrm{S1} \in \mathbb{C}^{LM \times d}$ are column-orthonormal (quasi-)matrices corresponding to the left and right singular vectors, respectively.
\par
Thus, the target problem \eqref{eq:gep} is reduced to a $d$-dimensional matrix generalized eigenvalue problem
\begin{equation*}
	U_\textrm{S1}^\mathsf{H} \mathcal{A} U_\textrm{S1} {\bm t}_i = \theta_i U_\textrm{S1}^\mathsf{H} \mathcal{B} U_\textrm{S1} {\bm t}_i,
\end{equation*}
where the approximated eigenpairs are computed as $(\widehat{\lambda}_i, \widehat{u}_i) = (\theta_i, U_\textrm{S1} {\bm t}_i)$.
The procedure of the contSS-RR method is summarized in Algorithm~\ref{alg:ss-rr}.
\begin{algorithm}[t]
	\small
	\caption{contSS-RR method}
	\label{alg:ss-rr}
	\begin{algorithmic}[1]
		\Require $L, M, N \in \mathbb{N}, \delta \in \mathbb{R}, V : \mathbb{C}^{L} \rightarrow \mathcal{H}, (z_j, \omega_j)$ for $j = 1, 2, \dots, N$
		\Ensure Approximate eigenpairs $(\widehat\lambda_i, \widehat{u}_i)$ for $i = 1, 2, \ldots, d$
		\State Compute $\widehat{S}_k = \sum_{j=1}^{N} \omega_j z_j^k (z_j \mathcal{B} -\mathcal{A} )^{-1} \mathcal{B} V$
		\State Set $\widehat{S} = [\widehat{S}_0, \widehat{S}_1, \dots, \widehat{S}_{M-1}]$
		\State Compute low-rank approximation of $\widehat{S}$ using the threshold $\delta$:
		\Statex $\widehat{S}= [U_\textrm{S1}, U_\textrm{S2}] [\Sigma_\textrm{S1}, O; O, \Sigma_\textrm{S2}] [W_\textrm{S1}, W_\textrm{S2}]^\mathsf{H} \approx U_\textrm{S1} \Sigma_\textrm{S1} W_\textrm{S1}^\mathsf{H}$
		\State Compute eigenpairs $(\theta_i, {\bm t}_i)$ of $U_\textrm{S1}^\mathsf{H} \mathcal{A} U_\textrm{S1} {\bm t}_i = \theta_i U_\textrm{S1}^\mathsf{H} \mathcal{B} U_\textrm{S1} {\bm t}_i$,
		\Statex and compute $(\widehat\lambda_i, \widehat{u}_i) = (\theta_i, U_\textrm{S1} {\bm t}_i)$ for $i = 1, 2, \ldots, d$
	\end{algorithmic}
\end{algorithm}
\subsubsection{ContSS-Hankel method}
An operator analogue of the complex moment-based method using Hankel matrices for matrix eigenvalue problems \cite{sakurai2003projection,ikegami2010filter} is presented.
Let $\mu_k \in \mathbb{C}^{L \times L}$ be a reduced complex moment of order $k$ defined as
\begin{equation*}
	\mu_k 
	= \frac{1}{2 \pi \textrm{i}} \oint \widetilde{V}^\mathsf{H} z^k (z \mathcal{B} - \mathcal{A})^{-1} \mathcal{B} V \textrm{d} z
	= \widetilde{V}^\mathsf{H} S_k
\end{equation*}
with $\widetilde{V}: \mathbb{C}^L \rightarrow \mathcal{H}$.
We also define block Hankel matrices
\begin{equation*}
	H_M^< = \left[
	\begin{array}{cccc}
		\mu_1 & \mu_2 & \cdots & \mu_{M} \\
		\mu_2 & \mu_3 & \cdots & \mu_{M+1} \\
		\vdots & \vdots & \ddots & \vdots \\
		\mu_{M} & \mu_{M+1} & \cdots & \mu_{2M-1}
	\end{array}
	\right], \quad
	H_M = \left[
	\begin{array}{cccc}
		\mu_0 & \mu_1 & \cdots & \mu_{M-1} \\
		\mu_1 & \mu_2 & \cdots & \mu_{M} \\
		\vdots & \vdots & \ddots & \vdots \\
		\mu_{M-1} & \mu_M & \cdots & \mu_{2M-2}
	\end{array}
	\right],
\end{equation*}
which have the following property.
\begin{theorem}
	\label{thm:hankel}
	If $\textrm{rank}(H_M) = \textrm{rank}(H_M^<) = m$, where $m$ is the number of eigenvalues in $\Omega$ of \eqref{eq:gep}, then the nonsingular part of a matrix pencil $zH_M - H_M^<$ and $z - \mathcal{P}_\Omega$ have the same spectrum, where $\mathcal{P}_\Omega$ is defined in \eqref{eq:defP}.
\end{theorem}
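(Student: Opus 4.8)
The plan is to carry over to the operator setting the factorization argument behind the matrix method using Hankel matrices (cf.\ \cite{ikegami2010filter,imakura2016relationships}): I would first obtain an explicit rank-$m$ factorization of both block Hankel matrices and then read off the regular (nonsingular) part of the pencil $zH_M - H_M^<$ from that factorization. By Cauchy's integral formula, exactly as in the proof of Theorem~\ref{thm:s}, together with the standing assumption that $\mathcal{X}_\Omega$ has a basis of eigenfunctions, I would choose a quasi-matrix $Q_\Omega = [u_1, \dots, u_m] \colon \mathbb{C}^m \to \mathcal{H}$ spanning $\mathcal{X}_\Omega$ with $\mathcal{C} Q_\Omega = Q_\Omega \Lambda_\Omega$, where $\Lambda_\Omega = \mathrm{diag}(\lambda_1, \dots, \lambda_m)$ collects the eigenvalues of \eqref{eq:gep} in $\Omega$ with multiplicity. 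Since $S_0 = \mathcal{M}_0 V = \mathcal{P}_\Omega V$ has all its columns in $\mathcal{X}_\Omega$, one may write $S_0 = Q_\Omega G$ with a coordinate matrix $G \in \mathbb{C}^{m \times L}$; then Theorem~\ref{thm:krylov} gives $S_k = \mathcal{C}^k S_0 = Q_\Omega \Lambda_\Omega^k G$ for every $k \ge 0$, and hence

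\begin{equation*}
	\mu_k = \widetilde{V}^\mathsf{H} S_k = \widetilde{G} \Lambda_\Omega^k G, \qquad \widetilde{G} := \widetilde{V}^\mathsf{H} Q_\Omega \in \mathbb{C}^{L \times m}.
\end{equation*}

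Substituting this into the block Hankel matrices, the $(p,q)$ block of $H_M$ is $\mu_{p+q} = (\widetilde{G}\Lambda_\Omega^p)(\Lambda_\Omega^q G)$ and that of $H_M^<$ is $\mu_{p+q+1} = (\widetilde{G}\Lambda_\Omega^p)\Lambda_\Omega(\Lambda_\Omega^q G)$, so with

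\begin{equation*}
	\mathcal{W}_M = \begin{bmatrix} \widetilde{G} \\ \widetilde{G}\Lambda_\Omega \\ \vdots \\ \widetilde{G}\Lambda_\Omega^{M-1} \end{bmatrix} \in \mathbb{C}^{LM \times m}, \qquad \mathcal{V}_M = [G, \Lambda_\Omega G, \dots, \Lambda_\Omega^{M-1} G] \in \mathbb{C}^{m \times LM},
\end{equation*}

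one obtains $H_M = \mathcal{W}_M \mathcal{V}_M$, $H_M^< = \mathcal{W}_M \Lambda_\Omega \mathcal{V}_M$, and therefore

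\begin{equation*}
	zH_M - H_M^< = \mathcal{W}_M (z I_m - \Lambda_\Omega) \mathcal{V}_M.
\end{equation*}

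Because $\mathcal{W}_M$ has only $m$ columns and $\mathcal{V}_M$ only $m$ rows, the hypothesis $\textrm{rank}(H_M) = m$ forces $\textrm{rank}(\mathcal{W}_M) = \textrm{rank}(\mathcal{V}_M) = m$, i.e., $\mathcal{W}_M$ has full column rank and $\mathcal{V}_M$ full row rank (and the companion hypothesis $\textrm{rank}(H_M^<) = m$ then additionally forces $\Lambda_\Omega$ to be nonsingular, consistent with $0 \notin \Omega$).

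To extract the nonsingular part, complete the columns of $\mathcal{W}_M$ and the rows of $\mathcal{V}_M$ to bases of $\mathbb{C}^{LM}$ and form the associated nonsingular matrices $P, Q \in \mathbb{C}^{LM \times LM}$ with $P \mathcal{W}_M = [I_m; O]$ and $\mathcal{V}_M Q = [I_m, O]$; then

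\begin{equation*}
	P (z H_M - H_M^<) Q = \begin{bmatrix} z I_m - \Lambda_\Omega & O \\ O & O \end{bmatrix},
\end{equation*}

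so $z H_M - H_M^<$ is strictly equivalent to the direct sum of the regular pencil $z I_m - \Lambda_\Omega$ and a purely singular zero block of size $LM - m$. Hence the nonsingular part of $z H_M - H_M^<$ has spectrum exactly $\{\lambda_i : \lambda_i \in \Omega\}$, counted with multiplicity; by Theorems~\ref{thm:s} and~\ref{thm:krylov} this coincides with the set of eigenvalues of \eqref{eq:gep} inside $\Omega$, i.e., with the spectrum of the regular part of $z - \mathcal{P}_\Omega$ restricted to $\mathcal{X}_\Omega$, as claimed.

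The step I expect to require the most care is this last one: giving a precise meaning to ``the nonsingular part of a (generically) singular matrix pencil'' and confirming that the block-diagonal form above exhibits its regular part --- this can be handled either by appealing to the Kronecker canonical form or by writing out the strict equivalence explicitly, as sketched. The reduction $\mu_k = \widetilde{G}\Lambda_\Omega^k G$ also genuinely uses the semisimplicity of the eigenvalues in $\Omega$ (without it $\mathcal{C}|_{\mathcal{X}_\Omega}$ is only similar to a Jordan matrix, and $\Lambda_\Omega$ must be replaced by that form throughout), whereas the intervening Hankel algebra is routine.
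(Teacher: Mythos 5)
Your argument is correct and is essentially the paper's own proof carried one step further: the paper likewise proves the theorem via a rank-$m$ factorization of both Hankel matrices through the target eigenspace, writing $H_M = \widetilde{S}^\mathsf{H} \mathcal{P}_\Omega S$ and $H_M^< = \widetilde{S}^\mathsf{H} \mathcal{P}_\Omega \mathcal{C} \mathcal{P}_\Omega S$ with $\widetilde{S} = [\widetilde{V}, \mathcal{C}^\mathsf{H} \widetilde{V}, \dots, (\mathcal{C}^\mathsf{H})^{M-1}\widetilde{V}]$, so that the pencil factors through $\mathcal{C}$ restricted to $\mathcal{X}_\Omega$. You merely pass to coordinates in an eigenbasis first (replacing $\mathcal{C}\rvert_{\mathcal{X}_\Omega}$ by $\Lambda_\Omega$, which is licensed by the paper's standing assumption that eigenfunctions span the invariant subspace) and, usefully, make explicit the final strict-equivalence step extracting the regular part of the pencil, which the paper's two-line proof leaves implicit.
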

\begin{proof}
	The complex moment $\mu_k$ can be written as
	\begin{equation*}
		\mu_k  
		= \widetilde{V}^\mathsf{H} \mathcal{P}_\Omega S_{k}
		= \widetilde{V}^\mathsf{H} \mathcal{P}_\Omega \mathcal{C}^1 \mathcal{P}_\Omega S_{k-1}
		= \dots 
		= \widetilde{V}^\mathsf{H} \mathcal{P}_\Omega \mathcal{C}^k \mathcal{P}_\Omega S_0,
	\end{equation*}
	where $\mathcal{C}$ is defined in Theorem~\ref{thm:krylov}.
	Letting
	\begin{equation*}
		\widetilde{S} = \left[ \widetilde{V}, \mathcal{C}^\mathsf{H} \widetilde{V}, \dots, (\mathcal{C}^\mathsf{H})^{M-1} \widetilde{V} \right],
	\end{equation*}
	the block Hankel matrices are written as
	\begin{equation*}
		H_M^< = \widetilde{S}^\mathsf{H} \mathcal{P}_\Omega \mathcal{C} \mathcal{P}_\Omega S, \quad
		H_M = \widetilde{S}^\mathsf{H} \mathcal{P}_\Omega S,
	\end{equation*}
	which proves Theorem~\ref{thm:hankel}.
\end{proof}
Theorem~\ref{thm:hankel} shows that the target eigenpairs of \eqref{eq:gep} can be computed via a matrix eigenvalue problem:
\begin{equation*}
	H_M^< {\bm y}_i = \theta_i H_M {\bm y}_i.
\end{equation*}
Note that from the equivalence
\begin{equation*}
	H_M^< {\bm y}_i = \theta_i H_M {\bm y}_i
	\quad \Leftrightarrow \quad
	( \mathcal{P}_\Omega \widetilde{S} )^\mathsf{H} \mathcal{C} ( \mathcal{P}_\Omega S ) {\bm y}_i = 
	\theta_i ( \mathcal{P}_\Omega \widetilde{S} )^\mathsf{H} ( \mathcal{P}_\Omega S ) {\bm y}_i,
\end{equation*}
this approach can be regarded as a Petrov--Galerkin-type projection for \eqref{eq:sep}, which has the same finite eigenpairs as $\mathcal{A}u_i = \lambda_i\mathcal{B}u_i$.
In practice, block Hankel matrices $H^<_M$ and $H_M$ are approximated by block Hankel matrices $\widehat{H}_M^<$ and $\widehat{H}_M$ whose block $(i,j)$ entries are $\widehat{\mu}_{i+j+1}$ and $\widehat{\mu}_{i+j}$, respectively, where
\begin{equation*}
	\widehat{\mu}_k
	= \sum_{j=1}^N \widetilde{V}^\mathsf{H} z_j^k \omega_j (z_j \mathcal{B} - \mathcal{A})^{-1} \mathcal{B} V
	= \widetilde{V}^\mathsf{H} \widehat{S}_k,
\end{equation*}
for $k = 1, 2, \dots, 2M-1$.
To reduce computational costs and improve numerical stability, we use a low-rank approximation of $\widehat{H}_M$ based on TSVD, i.e.,
\begin{equation*}
	\widehat{H}_M = [U_\textrm{H1}, U_\textrm{H2}] \left[
	\begin{array}{cc}
		\Sigma_\textrm{H1} & O \\
		O & \Sigma_\textrm{H2}
	\end{array}
	\right] \left[
	\begin{array}{c}
		W_\textrm{H1}^\mathsf{H} \\
		W_\textrm{H2}^\mathsf{H}
	\end{array}
	\right] \approx
	U_\textrm{H1} \Sigma_\textrm{H1} W_\textrm{H1}^\mathsf{H},
\end{equation*}
where $\Sigma_\textrm{H1} \in \mathbb{R}^{d \times d}$ is a diagonal matrix whose diagonal entries are the $d$ largest singular values such that $\sigma_d / \sigma_1 \geq \delta \geq  \sigma_{d+1} / \sigma_1$ $(\sigma_i \geq \sigma_{i+1}, i = 1, 2, \dots, d)$ and $U_\textrm{H1}, W_\textrm{H1} \in \mathbb{C}^{LM \times d}$ are column-orthonormal matrices corresponding to the left and right singular vectors, respectively.
\par
Then, the target problem \eqref{eq:gep} is reduced to a $d$-dimensional standard matrix eigenvalue problem of the form
\begin{equation*}
	U_\textrm{H1}^\mathsf{H} \widehat{H}_M^< W_\textrm{H1} \Sigma_\textrm{H1}^{-1} {\bm t}_i = \theta_i {\bm t}_i,
\end{equation*}
where the approximated eigenpairs can be computed as $(\widehat{\lambda}_i, \widehat{u}_i) = (\theta_i, \widehat{S} W_\textrm{H1} \Sigma_\textrm{H1}^{-1} {\bm t}_i)$.
The procedure of the contSS-Hankel method is summarized in Algorithm~\ref{alg:ss-hankel}.
\begin{algorithm}[t]
	\small
	\caption{contSS-Hankel method}
	\label{alg:ss-hankel}
	\begin{algorithmic}[1]
		\Require $L, M, N \in \mathbb{N}, \delta \in \mathbb{R}, V : \mathbb{C}^{L} \rightarrow \mathcal{H}, (z_j, \omega_j)$ for $j = 1, 2, \dots, N$
		\Ensure Approximate eigenpairs $(\widehat\lambda_i, \widehat{u}_i)$ for $i = 1, 2, \ldots, d$
		\State Compute $\widehat{S}_k = \sum_{j=1}^{N} \omega_j z_j^k (z_j \mathcal{B} -\mathcal{A} )^{-1} \mathcal{B} V$
		\State Set $\widehat{S} = [\widehat{S}_0, \widehat{S}_1, \dots, \widehat{S}_{M-1}]$ and $\widehat{\mu}_k = \widetilde{V}^\mathsf{H} \widehat{S}_k$
		\State Set block Hankel matrices $\widehat{H}_M^<$ and $\widehat{H}_M$
		\State Compute low-rank approximation of $\widehat{H}_M$ using the threshold $\delta$:
		\Statex $\widehat{H}_M= [U_\textrm{H1}, U_\textrm{H2}] [\Sigma_\textrm{H1}, O; O, \Sigma_\textrm{H2}] [W_\textrm{H1}, W_\textrm{H2}]^\mathsf{H} \approx U_\textrm{H1} \Sigma_\textrm{H1} W_\textrm{H1}^\mathsf{H}$
		\State Compute eigenpairs $(\theta_i, {\bm t}_i)$ of $U_\textrm{H1}^\mathsf{H} \widehat{H}_M^< W_\textrm{H1} \Sigma_\textrm{H1} {\bm t}_i = \theta_i {\bm t}_i$,
		\Statex and compute $(\widehat\lambda_i, \widehat{u}_i) = (\theta_i, \widehat{S} W_\textrm{H1} \Sigma_\textrm{H1}^{-1} {\bm t}_i)$ for $i = 1, 2, \ldots, d$
	\end{algorithmic}
\end{algorithm}
\subsubsection{ContSS-CAA method}
An operator analogue of the complex moment-based method using the communication avoiding Arnoldi procedure for matrix eigenvalue problems \cite{imakura2017block} is presented.
Theorem~\ref{thm:krylov} shows that the target eigenpairs of \eqref{eq:gep} can be obtained by using a block Arnoldi method with $\mathscr{R}(S) = \mathscr{K}_M(\mathcal{C},S_0)$ for \eqref{eq:sep}, which has the same finite eigenpairs as $\mathcal{A}u_i = \lambda_i\mathcal{B}u_i$.
In this algorithm, a quasi-matrix $Q : \mathbb{C}^{LM} \rightarrow \mathcal{H}$ whose columns form an orthonormal basis of $\mathscr{R}(S) = \mathscr{K}_M(\mathcal{C},S_0)$ and a block Hessenberg matrix $T_M = Q^\mathsf{H} \mathcal{C} Q$ are constructed, and the target eigenpairs are computed by solving the standard matrix eigenvalue problem
\begin{equation*}
	T_M {\bm t}_i = \theta_i {\bm t}_i.
\end{equation*}
Therefore, we have $(\lambda_i,u_i) = (\theta_i, Q {\bm t}_i)$.
\par
Further, we consider using a block version of the communication-avoiding Arnoldi procedure~\cite{hoemmen2010communication}.
Let $S_+= [S_0, S_1, \dots, S_M]: \mathbb{C}^{L(M+1)} \rightarrow \mathcal{H}$ be a quasi-matrix.
From Theorem~\ref{thm:krylov}, we have
\begin{equation*}
	\mathcal{C} S = S_+ D_1, \quad
	D_1 = \left[
	\begin{array}{c}
		O_{L,LM} \\
		I_{LM}
	\end{array}
	\right].
\end{equation*}
Here, based on the concept of a block version of the communication-avoiding Arnoldi procedure, using the QR factorizations
\begin{equation*}
	S_+ = Q_+ R_+, \quad
	S=Q R,
\end{equation*}
where $Q = Q_+(:,1\!:\!LM),  R = R_+(1\!:\!LM,1\!:\!LM)$, the block Hessenberg matrix $T_M$ is obtained by
\begin{align*}
	T_M 
	&= Q^\mathsf{H} S_+ D_1 R^{-1} \\
	&= Q^\mathsf{H} Q_+ R_+ D_1 R^{-1} \\
	&= [I_{LM}, O_{LM,L}] R_+ D_1 R^{-1} \\
	&= R_+(1\!:\!LM,L+1\!:LM+L) R^{-1}.
\end{align*}
\par
In practice, we approximate the block Hessenberg matrix $T_M$ by
\begin{equation*}
	\widehat{T}_M = \widehat{R}_+(1\!:\!LM,L+1\!:LM+L) \widehat{R}^{-1},
\end{equation*}
where
\begin{equation*}
	\widehat{S}_+ = [S_0, S_1, \dots, S_M]= \widehat{Q}_+ \widehat{R}_+, \quad
	\widehat{S} = \widehat{Q} \widehat{R},
\end{equation*}
are the QR factorizations of $\widehat{S}_+$ and $\widehat{S}$, respectively, $\widehat{Q} = \widehat{Q}_+(:,1\!:\!LM)$, and $\widehat{R}=\widehat{R}_+(1\!:\!LM,1\!:\!LM)$, and use a low-rank approximation of $\widehat{S}$ based on TSVD, i.e.,
\begin{equation*}
	\widehat{S} = \widehat{Q} \widehat{R}
	= \widehat{Q} [U_\textrm{R1}, U_\textrm{R2}] \left[
	\begin{array}{ll}
		\Sigma_\textrm{R1} & O \\
		O & \Sigma_\textrm{R2}
	\end{array}
	\right] \left[
	\begin{array}{ll}
		W_\textrm{R1}^\mathsf{H} \\
		W_\textrm{R2}^\mathsf{H}
	\end{array}
	\right] \approx \widehat{Q} U_\textrm{R1} \Sigma_\textrm{R1} W_\textrm{R1}^\mathsf{H},
\end{equation*}
where $\Sigma_\textrm{R1} \in \mathbb{R}^{d \times d}$ is a diagonal matrix whose diagonal entries are the $d$ largest singular values $\sigma_d / \sigma_1 \geq \delta \geq  \sigma_{d+1} / \sigma_1$ $(\sigma_i \geq \sigma_{i+1}, i = 1,2, \dots, d)$ and $U_\textrm{R1}, W_\textrm{R1} \in \mathbb{C}^{LM \times d}$ are column-orthonormal matrices corresponding to the left and right singular vectors of $\widehat{R}$, respectively.
\par
Then, the target problem \eqref{eq:gep} is reduced to a $d$-dimensional matrix standard eigenvalue problem of the form
\begin{equation*}
	U_\textrm{R1}^\mathsf{H} \widehat{T}_M U_\textrm{R1} {\bm t}_i = \theta_i {\bm t}_i.
\end{equation*}
The approximate eigenpairs are obtained as $(\widehat{\lambda}_i,\widehat{u}_i) = (\theta_i, \widehat{Q} U_\textrm{R1} {\bm t}_i)$.
The coefficient\break matrix~$U_\textrm{R1}^\mathsf{H} \widehat{T}_M U_\textrm{R1}$ is efficiently computed by
\begin{equation*}
	U_\textrm{R1}^\mathsf{H} \widehat{T}_M U_\textrm{R1} = U_\textrm{R1}^\mathsf{H} \widehat{R}_{+}(1\!:\!LM,L\!+\!1\!:\!LM\!+\!L) W_\textrm{R1} \Sigma_\textrm{R1}^{-1}.
\end{equation*}
The procedure of the contSS-CAA method is summarized in Algorithm~\ref{alg:ss-caa}.
\begin{algorithm}[t]
	\small
	\caption{contSS-CAA method}
	\label{alg:ss-caa}
	\begin{algorithmic}[1]
		\Require $L, M, N \in \mathbb{N}, \delta \in \mathbb{R}, V : \mathbb{C}^{L} \rightarrow \mathcal{H}, (z_j, \omega_j)$ for $j = 1, 2, \dots, N$
		\Ensure Approximate eigenpairs $(\widehat\lambda_i, \widehat{u}_i)$ for $i = 1, 2, \ldots, d$
		\State Compute $\widehat{S}_k = \sum_{j=1}^{N} \omega_j z_j^k (z_j \mathcal{B} -\mathcal{A} )^{-1} \mathcal{B} V$
		\State Set $\widehat{S} = [\widehat{S}_0, \widehat{S}_1, \dots, \widehat{S}_{M-1}]$ and $\widehat{S}_+ = [\widehat{S}_0, \widehat{S}_1, \dots, \widehat{S}_{M}]$
		\State Compute QR factorization of $\widehat{S}_+$: $\widehat{S}_+ = \widehat{Q}_+ \widehat{R}_+$
		\State Set $\widehat{R} = \widehat{R}_+(1\!:\!LM,1\!:\!LM)$, $\widehat{Q} = \widehat{Q}_+(:,1\!:\!LM)$
		\State Compute low-rank approximation of $R$ using the threshold $\delta$:
		\Statex $\widehat{R}= [U_\textrm{R1}, U_\textrm{R2}] [\Sigma_\textrm{R1}, O; O, \Sigma_\textrm{R2}] [W_\textrm{R1}, W_\textrm{R2}]^\mathsf{H} \approx U_\textrm{R1} \Sigma_\textrm{R1} W_\textrm{R1}^\mathsf{H}$
		\State Compute eigenpairs $(\theta_i, {\bm t}_i)$ of $U_\textrm{R1}^\mathsf{H} \widehat{R}_{+}(1\!:\!LM,L\!+\!1\!:\!LM\!+\!L) W_\textrm{R1} \Sigma_\textrm{R1}^{-1} {\bm t}_i = \theta_i {\bm t}_i$,
		\Statex and compute $(\widehat\lambda_i, \widehat{u}_i) = (\theta_i, \widehat{Q} U_\textrm{R1} {\bm t}_i)$ for $i = 1, 2, \ldots, d$
	\end{algorithmic}
\end{algorithm}
\subsection{Subspace iteration and error bound}
\label{sec:errorbound}
We consider improving the accuracy of the eigenpairs via a subspace iteration technique, as in the matrix version of complex moment-based eigensolvers.
We construct $\widehat{S}_0^{(\ell-1)}$ via the following iteration step:
\begin{equation}
	\widehat{S}^{(\nu)}_0 = \sum_{j=1}^{N} \omega_j (z_j \mathcal{B} - \mathcal{A})^{-1} \mathcal{B} \widehat{S}_0^{(\nu-1)}, \quad
	\nu = 1, 2, \ldots, \ell-1
	\label{eq:iter1}
\end{equation}
with the initial quasi-matrix $\widehat{S}_0^{(0)} = V$.
Then, instead of $\widehat{S}$ in each method, we use $\widehat{S}^{(\ell)}$ constructed from $\widehat{S}_0^{(\ell-1)}$ by
\begin{equation}
	\widehat{S}^{(\ell)} = [\widehat{S}_0^{(\ell)}, \widehat{S}_1^{(\ell)}, \ldots, \widehat{S}_{M-1}^{(\ell)} ], \quad
	\widehat{S}^{(\ell)}_k = \sum_{j=1}^{N} \omega_j z_j^k (z_j \mathcal{B} - \mathcal{A})^{-1} \mathcal{B} \widehat{S}_0^{(\ell-1)}.
	\label{eq:iter2}
\end{equation}
The orthonormalization of the columns of $\widehat{S}_0^{(\nu)}$ in each iteration may improve the numerical stability.
\par
Now, we analyze the error bound of the proposed methods with the subspace iteration technique as introduced in \eqref{eq:iter1} and \eqref{eq:iter2}.
We assume that $\mathcal{B}$ is invertible and all the eigenvalues are isolated.
Then, we have
\begin{equation*}
	(z \mathcal{B} - \mathcal{A})^{-1} \mathcal{B} = \sum_{i=1}^\infty \frac{1}{z - \lambda_i} \mathcal{P}_i,
\end{equation*}
where $\mathcal{P}_i$ is a spectral projector associated with $\lambda_i$ s.t. $\mathcal{P}_i \mathcal{P}_j = \delta_{ij} \mathcal{P}_i$ \cite[VII Section 6]{kato1995perturbation}.
We also assume that the numerical quadrature satisfies
\begin{equation*}
	\sum_{j=1}^N \omega_j z_j^k \left\{
	\begin{array}{ll}
		\neq 0, & (k = -1) \\
		= 0, & (k = 0, 1, \ldots, N-2)
	\end{array}
	\right..
\end{equation*}
\par
Under the above assumptions, for $k = 0, 1, \dots, M-1$, the quasi-matrix $\widehat{S}_k^{(\ell)}$ can be written as
\begin{align*}
	\widehat{S}_k^{(\ell)} 
	& = \sum_{j=1}^N z_j^k (z_j \mathcal{B} - \mathcal{A})^{-1} \mathcal{B} \widehat{S}^{(\ell-1)} \\
	&= \sum_{i=1}^\infty \left(\sum_{j=1}^N \frac{\omega_jz_j^k}{z_j - \lambda_i} \right) \left(\sum_{j=1}^N \frac{\omega_j}{z_j - \lambda_i} \right)^{\ell-1} \mathcal{P}_i V \\
	&= \sum_{i=1}^\infty \lambda_i^k \left(\sum_{j=1}^N \frac{\omega_j}{z_j - \lambda_i} \right)^\ell \mathcal{P}_i V \\
	&= \mathcal{C}^k \mathcal{F}^\ell V,
\end{align*}
where
\begin{equation*}
	\mathcal{F} = \sum_{i=1}^\infty f_N(\lambda_i) \mathcal{P}_i, \quad
	f_N(\lambda_i) = \sum_{j=1}^N \frac{\omega_j}{z - \lambda_i}.
\end{equation*}
From the definitions of $\mathcal{C}$ and $\mathcal{F}$, these operators are commutative, $\mathcal{C}\mathcal{F} = \mathcal{F}\mathcal{C}$.
Therefore, we have
\begin{equation}
	\widehat{S}^{(\ell)} = \mathcal{F}^\ell [V, \mathcal{C} V, \dots, \mathcal{C}^{M-1}V].
	\label{eq:iteration}
\end{equation}
\par
Here, $f_N(\lambda_i)$ is called the filter function; it is used for error analyses of complex moment-based matrix eigensolvers \cite{tang2014feast,guttel2015zolotarev,imakura2016error,imakura2016relationships}.
Fig.~\ref{fig:filter} shows the magnitude of the filter function~$\lvert f_N(\lambda_i) \rvert$ for the $N$-point trapezoidal rule with $N = 16, 32$, and $64$ for the unit circle region~$\Omega$.
Here, note that the oscillations at $\lvert f_N(\lambda) \rvert \approx  10^{-16}$ are due to roundoff errors.
The filter function has $\lvert f_N(\lambda) \rvert \approx 1$ inside $\Omega$, $\lvert f_N(\lambda) \rvert \approx 0$ far from $\Omega$, and $0 < \lvert f_N(\lambda) \rvert < 1$ outside but near the region.
Therefore, $\mathcal{F}$ is a bounded linear operator.
\begin{figure}[t]
	\centering
	\begin{minipage}{0.49\hsize}
		\centering
		\includegraphics[width=0.8\textwidth]{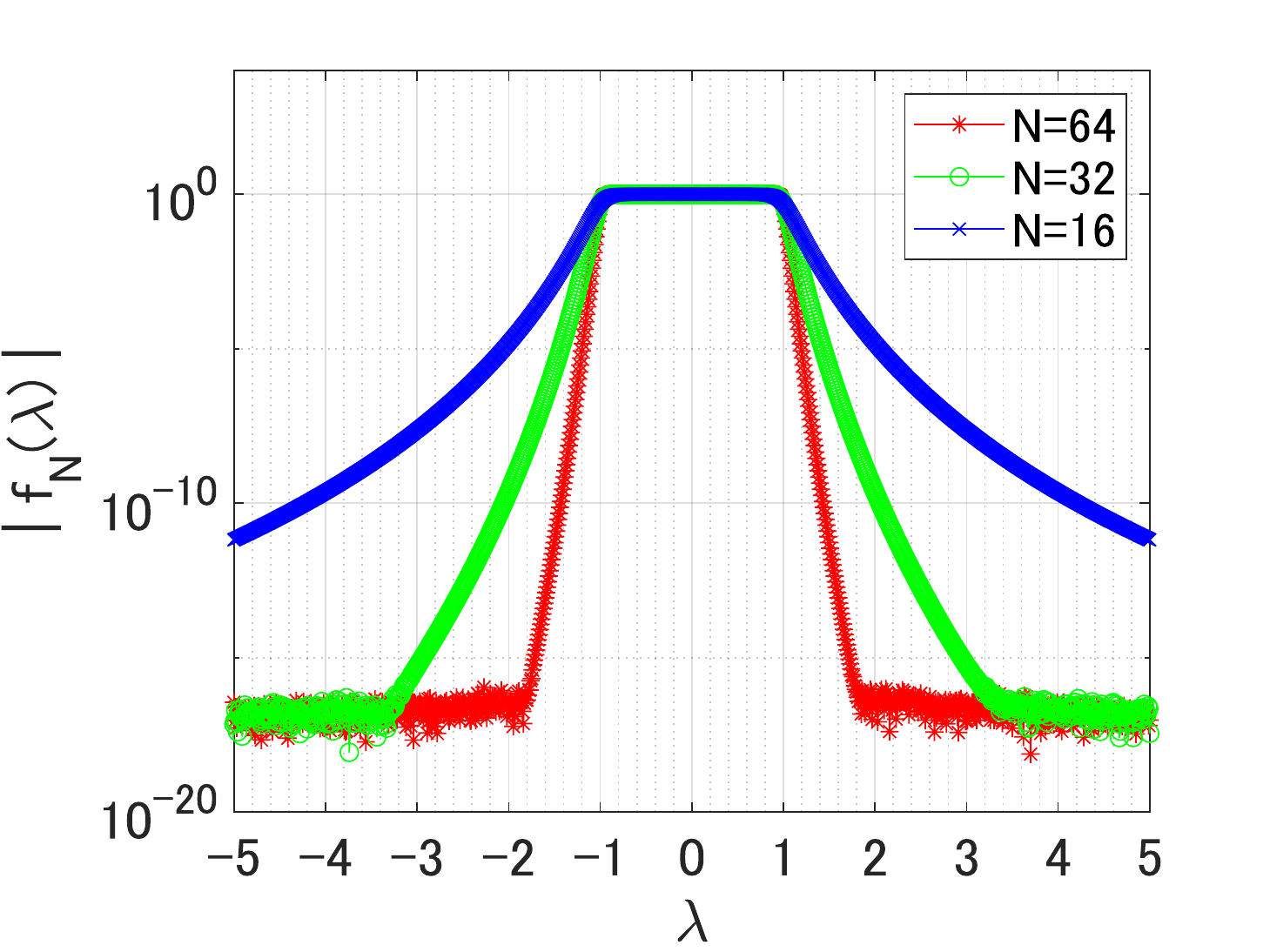}
		\subcaption{On the real axis}
	\end{minipage}
	\begin{minipage}{0.49\hsize}
		\centering
		\includegraphics[width=0.8\textwidth]{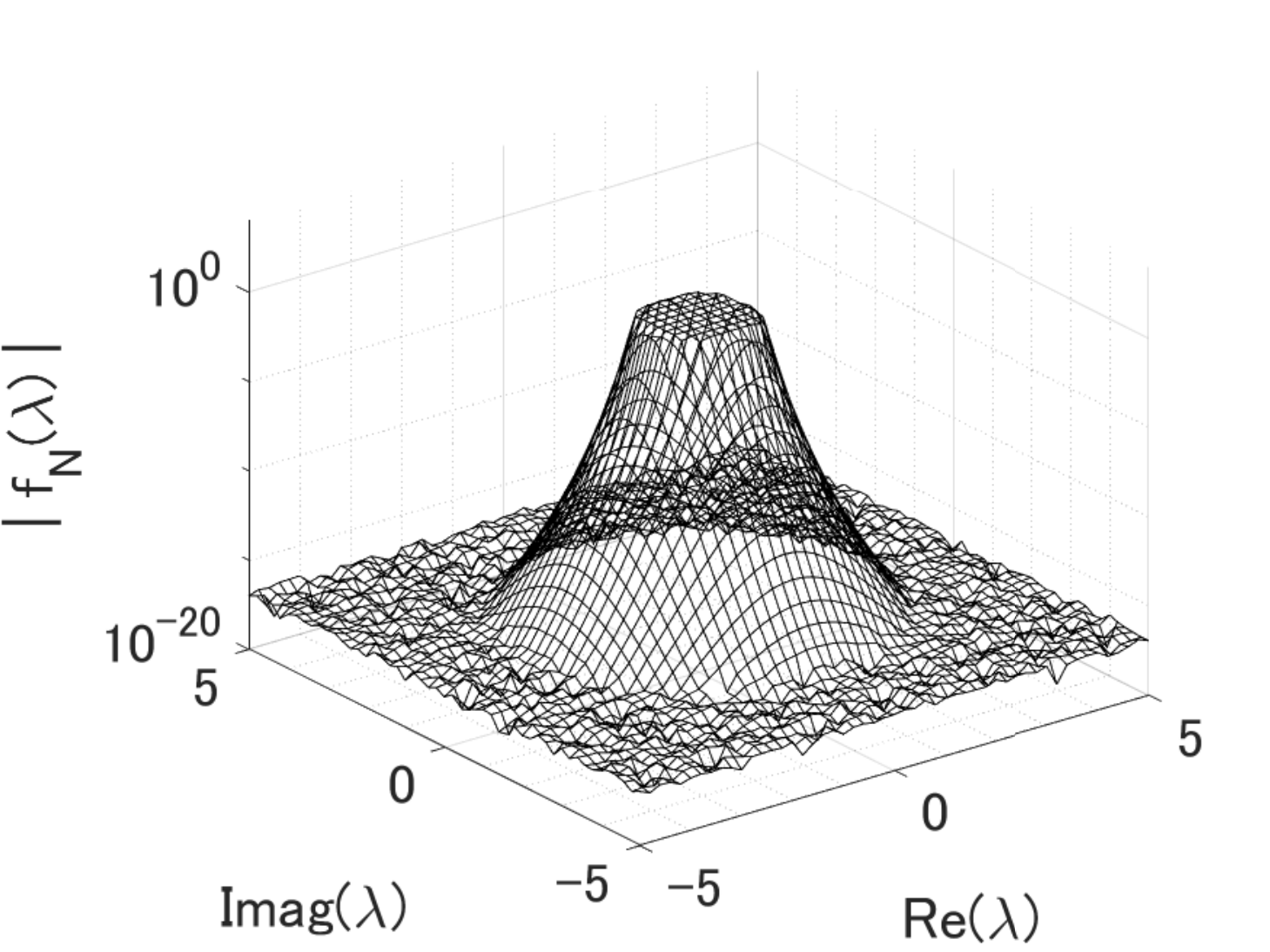}
		\subcaption{On the complex plane}
	\end{minipage}
	\caption{Magnitude of the filter functions for the $N$-point trapezoidal rule for the unit circle region $\Omega$.}
	\label{fig:filter}
\end{figure}
\par
Applying \cite[Theorem~5.1]{horning2020feast} to \eqref{eq:iteration} under the above assumptions, we have the following theorem for an error bound of the proposed methods.
\begin{theorem}
	\label{thm:error}
	Let $(\lambda_i,u_i)$ be exact finite eigenpairs of the differential eigenvalue problem $\mathcal{A} u_i = \lambda_i \mathcal{B} u_i, i = 1, 2, \dots, LM$.
	Assume that the filter function $f_N(\lambda_i)$ is ordered by decreasing magnitude $\lvert f_N(\lambda_i) \rvert \geq \lvert f_N(\lambda_{i+1}) \rvert$.
	We define as an orthogonal projector onto the subspaces~$\mathscr{R}( \widehat{S}^{(\ell)} )$ and the spectral projector with an invariant subspace $\textrm{span}\{ u_1, u_2, \ldots, u_{LM} \}$ by $\mathcal{P}^{(\ell)}$ and $\mathcal{P}_{LM}$, respectively.
	Assume that $\mathcal{P}_{LM} \widehat{S}^{(0)}$ has full rank, where $\widehat{S}_0$ is defined in \eqref{eq:iter2}.
	Then, for each eigenfunction $u_i, i = 1, 2, \dots$, $LM$, there exists a unique function $s_i \in \mathcal{K}_M^\square(\mathcal{C},V)$ such that $\mathcal{P}_{LM} s_i = u_i$.
	Thus, the following inequality is satisfied: 
	\begin{equation*}
		\| (I - \mathcal{P}^{(\ell)} ) u_i \|_\mathcal{H}
		\leq \alpha \beta_i \left \lvert \frac{f_N(\lambda_{LM+1})}{f_N(\lambda_i)} \right \rvert^\ell, \quad
		i = 1, 2, \ldots, LM, \quad
		\ell = 1, 2, \ldots,
		\label{eq:ineq1}
	\end{equation*}
	where $\alpha$ is a constant and $\beta_i = \| u_i - s_i \|_\mathcal{H}$.
\end{theorem}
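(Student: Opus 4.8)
The plan is to obtain the estimate as a direct consequence of the subspace-iteration error bound for the operator analogue of FEAST, namely \cite[Theorem~5.1]{horning2020feast}, once we recognize $\widehat{S}^{(\ell)}$ as an ordinary filtered subspace iterate started from an enlarged set of functions. Equation~\eqref{eq:iteration} already shows $\widehat{S}^{(\ell)} = \mathcal{F}^\ell\,[V, \mathcal{C}V, \dots, \mathcal{C}^{M-1}V]$, so putting $\widehat{S}^{(0)} = [V, \mathcal{C}V, \dots, \mathcal{C}^{M-1}V]$ — a quasi-matrix with $LM$ columns whose range is the block Krylov subspace $\mathcal{K}_M^\square(\mathcal{C},V)$ — we have $\widehat{S}^{(\ell)} = \mathcal{F}^\ell \widehat{S}^{(0)} = \mathcal{F}\,\widehat{S}^{(\ell-1)}$, which is exactly the $\ell$-th iterate of the contFEAST iteration, except that it is carried out on $LM$ functions rather than on $L$. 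Thus, at the level of the subspace generated, the proposed methods differ from contFEAST only in the choice and number of starting functions.

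Next I would verify that the hypotheses of \cite[Theorem~5.1]{horning2020feast} hold in this enlarged setting with subspace dimension $LM$. As established in the paragraph preceding the theorem, $\mathcal{F} = \sum_i f_N(\lambda_i)\mathcal{P}_i$ is a bounded linear operator that shares the spectral projectors $\mathcal{P}_i$ with $\mathcal{A}u = \lambda\mathcal{B}u$ and has eigenvalues $f_N(\lambda_i)$; the filter values are ordered by decreasing magnitude by assumption; and $\mathcal{P}_{LM}\widehat{S}^{(0)}$ has full rank, again by assumption, so no target direction is annihilated by the filter. Since $\mathcal{F}$ and $\mathcal{C}$ commute and the $\mathcal{P}_i$ are mutually orthogonal idempotents, the argument of the cited theorem goes through verbatim with $LM$ in place of $L$: it uses only boundedness of the filter, the spectral ordering, and the full-rank condition on $\mathcal{P}_{LM}\widehat{S}^{(0)}$, and is indifferent to whether the starting columns are raw random functions or the structured functions $\mathcal{C}^k v$.

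I would then record the well-posedness of $s_i$. Because $\mathcal{K}_M^\square(\mathcal{C},V) = \mathscr{R}(\widehat{S}^{(0)})$ is $LM$-dimensional and $\mathcal{P}_{LM}\widehat{S}^{(0)}$ has full rank, the restriction of the spectral projector $\mathcal{P}_{LM}$ to $\mathcal{K}_M^\square(\mathcal{C},V)$ is a bijection onto $\textrm{span}\{u_1,\dots,u_{LM}\}$; applying its inverse to $u_i$ produces the unique $s_i \in \mathcal{K}_M^\square(\mathcal{C},V)$ with $\mathcal{P}_{LM}s_i = u_i$, and $\beta_i = \|u_i - s_i\|_\mathcal{H}$ measures the failure of the block Krylov subspace to contain $u_i$ exactly. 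Invoking \cite[Theorem~5.1]{horning2020feast} with initial quasi-matrix $\widehat{S}^{(0)}$ of column dimension $LM$ and with $\mathcal{P}^{(\ell)}$ the orthogonal projector onto $\mathscr{R}(\widehat{S}^{(\ell)})$ then yields, for each $i=1,\dots,LM$, the claimed inequality $\| (I - \mathcal{P}^{(\ell)} ) u_i \|_\mathcal{H} \le \alpha \beta_i \, \lvert f_N(\lambda_{LM+1})/f_N(\lambda_i) \rvert^\ell$.

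I expect the only genuine obstacle to be the bookkeeping in the second paragraph: one must confirm that the proof of \cite[Theorem~5.1]{horning2020feast} is formulated for a generic initial subspace of the stated dimension, so that the substitution $L \mapsto LM$, $V \mapsto [V,\mathcal{C}V,\dots,\mathcal{C}^{M-1}V]$ is legitimate and the constant $\alpha$ remains uniform in $\ell$. Should the cited theorem be stated only for the specific starting quasi-matrix used in contFEAST, one would instead reprove it in the present setting, which is a routine modification: expand $\widehat{S}^{(\ell)} = \mathcal{F}^\ell\widehat{S}^{(0)}$ in the eigenbasis $\{\mathcal{P}_i V\}$, split off the $LM$ dominant components, and bound the tail using $\lvert f_N(\lambda_i) \rvert \le \lvert f_N(\lambda_{LM+1}) \rvert$ for $i > LM$ together with the boundedness of $\mathcal{F}$ on the complementary invariant subspace.
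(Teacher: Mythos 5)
Your proposal matches the paper's own argument: the paper establishes $\widehat{S}^{(\ell)} = \mathcal{F}^{\ell}[V,\mathcal{C}V,\dots,\mathcal{C}^{M-1}V]$ in \eqref{eq:iteration} and then simply invokes \cite[Theorem~5.1]{horning2020feast} with the enlarged $LM$-column starting block, exactly as you do. Your additional remarks on the well-posedness of $s_i$ and the substitution $L\mapsto LM$ only flesh out details the paper leaves implicit.
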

\par
Theorem~\ref{thm:error} indicates that, using a sufficiently large number of columns $LM$ in the transformation quasi-matrix $\widehat{S}$ such that $\lvert f_N(\lambda_{LM+1}) \rvert^\ell \approx 0$, the proposed methods achieve high accuracy for the target eigenpairs even if $N$ is small and some eigenvalues exist outside but near the region.
\subsection{Summary and advantages over existing methods}
We summarize the proposed methods and present their advantages over existing methods.
\subsubsection{Summary of the proposed methods}
ContSS-RR is a Rayleigh--Ritz-type projection method that explicitly solves \eqref{eq:gep}; on the other hand, contSS-Hankel and contSS-CAA are a Petrov--Galerkin-type projection method and block Arnoldi method that implicitly solve \eqref{eq:sep}, respectively.
If the computational cost for explicit projection, i.e., $U_\textrm{S1}^\mathsf{H} \mathcal{A} U_\textrm{S1}$ and $U_\textrm{S1}^\mathsf{H} \mathcal{B} U_\textrm{S1}$ in contSS-RR, is large, contSS-Hankel and contSS-CAA can be more efficient than contSS-RR.
\par
Orthogonalization of basis functions is required for contSS-RR (step 3 of Algorithm~\ref{alg:ss-rr}) and contSS-CAA(step 5 of Algorithm~\ref{alg:ss-caa}) for accuracy but not performed in contSS-Hankel.
This is the advantage of contSS-Hankel regarding computational costs over other methods.
In addition, this is advantageous for contSS-Hankel when applied to DEPs over a domain for which it is difficult to construct accurate orthonormal bases in such as triangles and tetrahedra domains.
\par
As well as contFEAST, since solving $LN$ ODEs \eqref{eq:ode} is the most-time consuming part of the proposed methods and is fully parallelizable, the proposed methods can be efficiently parallelized.
	\subsubsection{Advantages over contFEAST}
	\label{sec:adv_contFEAST}
	ContFEAST is a subspace iteration method based on the $L$ dimensional subspace $\mathscr{R}(\widehat{S}_0)$ for \eqref{eq:gep}.
	Instead, the proposed methods are projection methods based on the $LM$ dimensional subspace $\mathscr{R}(\widehat{S})$. 
	From Theorem~\ref{thm:error}, we can also observe that the proposed methods using higher-order complex moments achieve higher accuracy than contFEAST, since $ \lvert f_N(\lambda_{LM+1}) \rvert < \lvert f_N(\lambda_{L+1})\rvert$.
	In other words, the proposed methods can use a smaller number of initial functions $L$ than contFEAST to achieve almost the same accuracy.
	Since the number of ODEs to solve is $LN$ in each iteration, the reduction of $L$ drastically reduces the computational costs.
	\par
	Therefore, the proposed methods exhibit smaller elapsed time than contFEAST, while maintaining almost the same high accuracy, as experimentally verified in Section~\ref{sec:experiment}.
	\subsubsection{Advantages over complex moment-based matrix eigensolvers}
	Methods using a ``solve-then-discretize'' approach, including the proposed methods and contFEAST, automatically preserves the normality or self-adjointness of the problems with respect to a relevant Hilbert space $\mathcal{H}$.
	In addition, the stability analysis in \cite{horning2020feast} shows that the sensitivity of the eigenvalues is preserved by Rayleigh--Ritz-type projection methods with an $\mathcal{H}$-orthonormal basis for self-adjoint DEPs, but can be increased by methods using a ``discretize-then-solve'' approach.
	As well as contFEAST, contSS-RR follows this result.
	\par
	Based on these properties, the proposed methods exhibit much higher accuracy than the complex moment-based matrix eigensolvers using a ``solve-then-discretize'' approach, as experimentally verified in Section~\ref{sec:experiment}.
\section{Numerical experiments}
\label{sec:experiment}
In this section, we evaluate the performances of the proposed methods, contSS-RR (Algorithm~\ref{alg:ss-rr}), contSS-Hankel (Algorithm~\ref{alg:ss-hankel}), and contSS-CAA (Algorithm~\ref{alg:ss-caa}), and compare them with that of contFEAST (Algorithm~\ref{alg:feast}) for solving DEPs \eqref{eq:gep}.
Although the target problem of this paper is DEPs with ordinary differential operators, here we apply the proposed methods to DEPs with partial differential operators and evaluate their effectiveness.
\par
The compared methods use the $N$-point trapezoidal rule to approximate the contour integrals.
In Sections \ref{sec:proof_of_concept}--\ref{sec:parallel} (Experiments I--IV) for ordinary differential operators, the quadrature points for the $N$-point trapezoidal are on an ellipse with center $\gamma$, major axis $\rho$, and aspect ratio~$\alpha$, i.e.,
\begin{equation*}
	z_j = \gamma + \rho \left(\cos(\theta_j) + \alpha\textrm{i} \sin(\theta_j)\right), \quad
	\theta_j = \frac{2 \pi}{N} \left(j-\frac{1}{2}\right), \quad
	j = 1, 2, \ldots, N.
\end{equation*}
The corresponding weights are set as
\begin{equation*}
	\omega_j = \frac{\rho}{N} \left(\alpha\cos(\theta_j) + \textrm{i} \sin(\theta_j)\right), \quad
	j = 1, 2, \ldots, N.
\end{equation*}
Here, for real problems, we used \eqref{eq:sym} to reduce the number of ODEs to be solved.
In Section~\ref{sec:partial_differential} (Experiment V) for partial differential operators, we used the real rational filtering technique~\eqref{eq:Chebyshev} to avoid complex partial differential equations (PDEs).
For the proposed methods, we set $\delta = 10^{-14}$ for the threshold of the low-rank approximation.
In all the methods, we set $V: \mathbb{C}^{L} \rightarrow \mathcal{H}$ to a random quasi-matrix, whose columns are randomly generated functions represented by using 32 Chebyshev points on the same domain with the target problem.
\par
Methods were implemented using MATLAB and Chebfun \cite{driscoll2014chebfun}.
ODEs and PDEs were solved by using the ``$\backslash$'' command of Chebfun.
All the numerical experiments were performed on a serial computer with the Microsoft (R) Windows (R) 10 Pro Operating System, an 11th Gen Intel(R) Core(TM) i7-1185G7 @ 3.00GHz CPU, and 32GB RAM.
\subsection{Experiment I: proof of concept}
\label{sec:proof_of_concept}
For a proof of concept of the proposed methods, i.e., to show an advantage of the proposed method in the ``solve-then-discretize'' paradigm over a ``discretize-then-solve'' approach in terms of accuracy, we tested on the one dimensional Laplace eigenvalue problem
\begin{equation}
	- \frac{ \textrm{d}^2 }{ \textrm{d}x^2 } u_i = \lambda_i u_i, \quad
	u_i(0) = u_i(\pi) = 0.
	\label{eq:laplace}
\end{equation}
Note that the true eigenpairs are $(\lambda_i, u_i) = (i^2, \sin (ix)), i \in \mathbb{Z}_+$.
We computed four eigenpairs such that $\lambda_i \in [0, 20]$.
\par
First, we apply standard ``discretize-then-solve'' approaches for solving \eqref{eq:laplace} in which the coefficient operator is discretized by a three-point central difference.
The obtained matrix eigenvalue problem of size $n$ is
\begin{figure}[t!]
\centering
\begin{minipage}{0.49\hsize}
	\centering
	\includegraphics[width=0.8\textwidth]{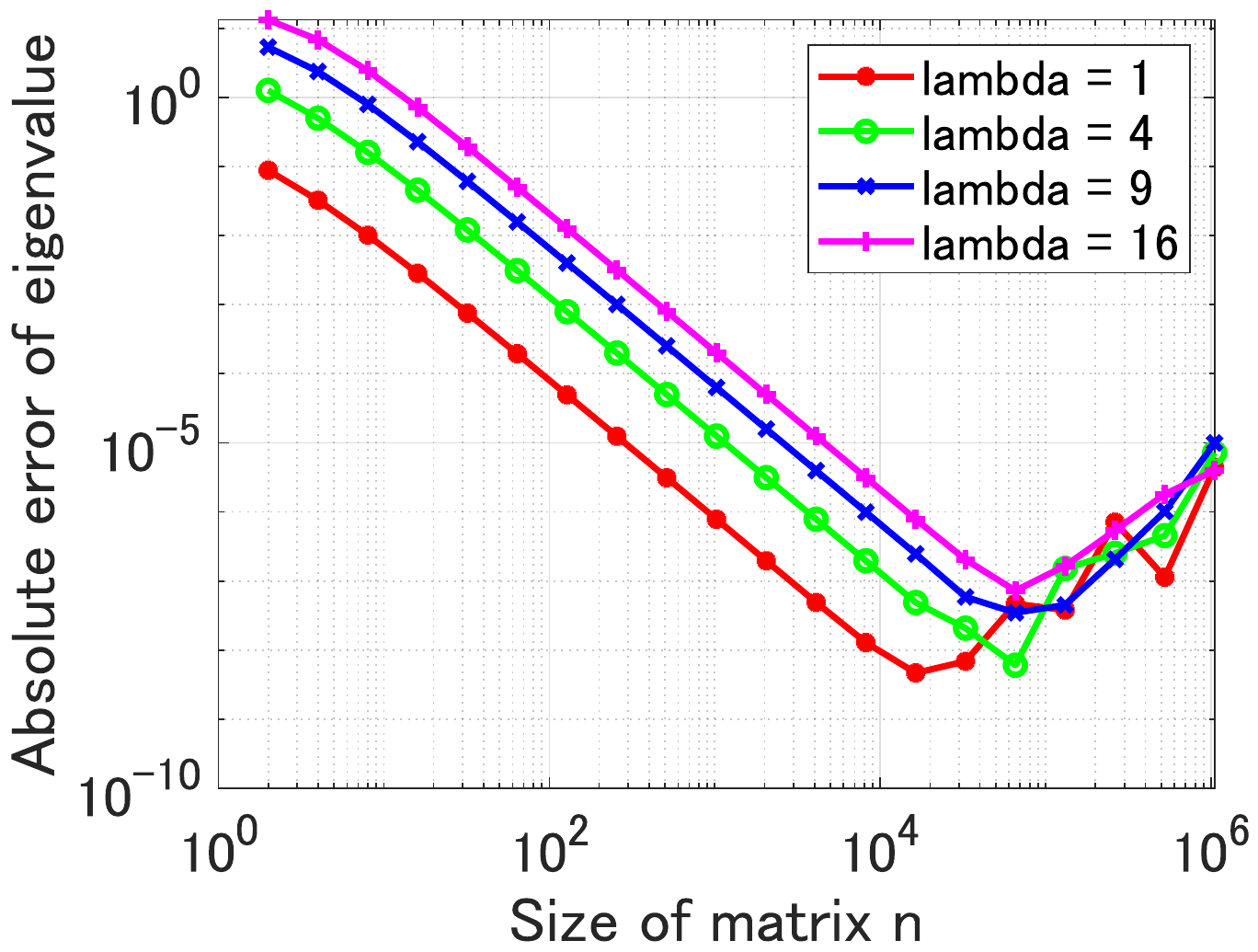}\\
	\subcaption{Using \eqref{eq:laplace_eig}.}
\end{minipage}
\begin{minipage}{0.49\hsize}
	\centering
	\includegraphics[width=0.8\textwidth]{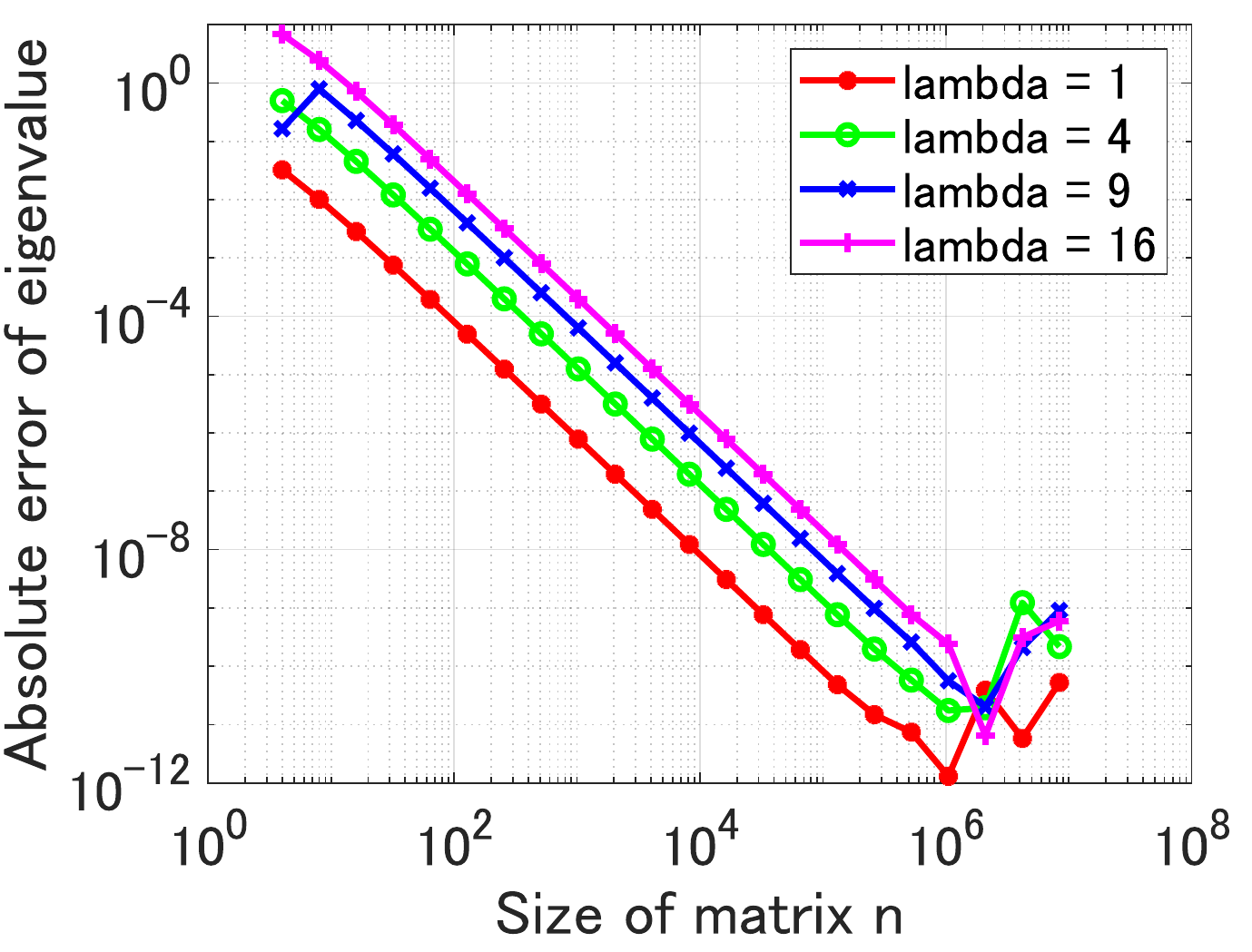}
	\subcaption{The SS-RR method for the discretized problem~\eqref{eq:laplace_problem}}
\end{minipage}
\caption{Absolute error of eigenvalue of ``discretize-then-solve'' approaches for the Laplace eigenvalue problem \eqref{eq:laplace}.}
\label{fig:Laplace}
\end{figure}
\begin{equation}
	\frac{1}{h^2} \left[
	\begin{array}{cccc}
		2 & -1 \\
		-1 & \ddots & \ddots \\
		& \ddots & \ddots & -1 \\
		&  & -1 & 2 
	\end{array}
	\right]
	{\bm u}^{(n)} = \lambda^{(n)} {\bm u}^{(n)}, \quad
	h = \frac{\pi}{n+1},
	\label{eq:laplace_problem}
\end{equation}
and its eigenvalues can be written as
\begin{equation}
	\lambda_i^{(n)} = \left(2 - 2 \cos \left(\frac{i \pi}{n+1} \right) \right) \left( \frac{n+1}{\pi}\right)^2.
	\label{eq:laplace_eig}
\end{equation}
Note that we have $\lim_{n \rightarrow \infty} \lambda_i^{(n)} = i^2$.
We computed the eigenvalues using \eqref{eq:laplace_eig} with increasing $n$.
We also applied SS-RR \cite{ikegami2010contour} with $(L,M,N) = (3,2,16)$ to the discretized problem \eqref{eq:laplace_problem} for each $n$.
\par
The absolute errors of approximate eigenvalues computed by the ``discretize-then-solve'' approaches are shown in Fig.~\ref{fig:Laplace}.
The errors decrease with increasing $n$; however, the errors turn to increase at $n \approx 10^5$ when using \eqref{eq:laplace_eig} due to rounding error and $n \approx 10^6$ for SS-RR due to quadrature and rounding errors.
The error reaches a minimum approximately $10^{-8}$ and $10^{-10}$ when using \eqref{eq:laplace_eig} and SS-RR, respectively.
\par
Next, we apply contSS-RR with $(L,M,N) = (3,2,16)$ to \eqref{eq:laplace}.
Here, we set $(\gamma,\rho,\alpha) = (10,10,1)$ for the contour path.
The obtained eigenvalues and eigenfunction are shown in Table~\ref{table:Laplace_eig} and Fig.~\ref{fig:Laplace_eigenvector}, respectively.
In contrast to the ``discretize-then-solve'' approach, the proposed method achieves much higher accuracy (absolute errors are approximately $10^{-14}$), which shows effectiveness of the ``solve-then-discretize'' approach over the ``discretize-then-solve'' approach.
This is one of the greatest advantages of the proposed method over complex moment-based matrix eigensolvers.
\begin{figure}[t]
	\centering
	\includegraphics[width=0.4\textwidth]{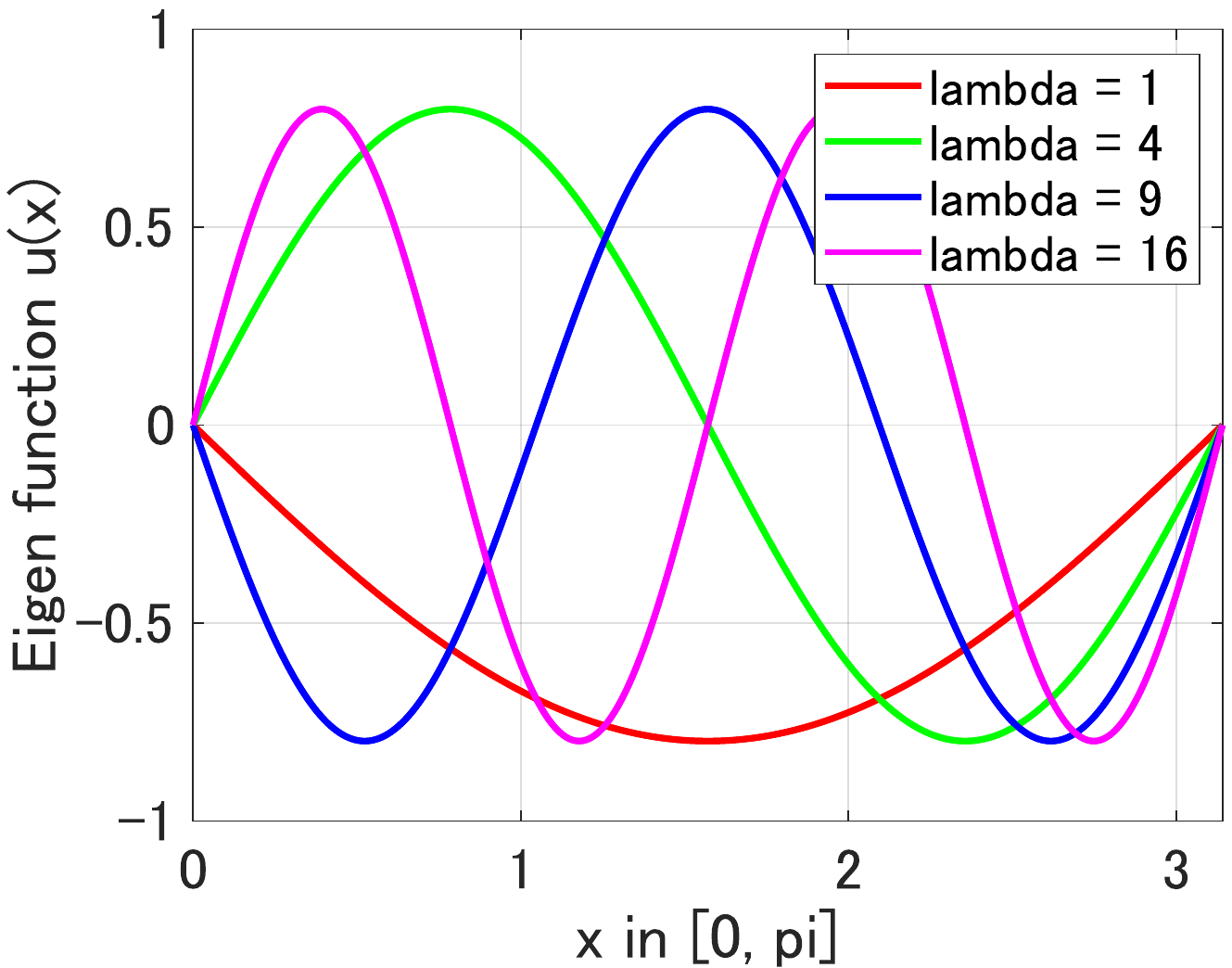}
	\caption{Obtained eigenfunction of the contSS-RR for the Laplace eigenvalue problem \eqref{eq:laplace}.}
	\label{fig:Laplace_eigenvector}
\end{figure}
\begin{table}[t]
	\centering
	\caption{True and obtained eigenvalues of the contSS-RR method for the Laplace eigenvalue problem \eqref{eq:laplace}.}
	\label{table:Laplace_eig}
	\begin{tabular}{crc} 
		\toprule
		True eigenvalue & \multicolumn{1}{c}{Obtained eigenvalue} & Absolute error \\ \cmidrule{1-3}
		1.0  &  0.999999999999997 & $3.00 \times 10^{-15}$ \\
		4.0  &  4.000000000000006 & $6.22 \times 10^{-15}$ \\
		9.0  &  9.000000000000020 & $1.95 \times 10^{-14}$ \\
		16.0 & 16.000000000000011 & $1.07 \times 10^{-14}$ \\
		\bottomrule
	\end{tabular}
\end{table}
\par
These results demonstrate that the proposed methods work well for solving DEPs without discretization of the coefficient operator.
\subsection{Experiment II: parameter dependence}
\begin{figure}[t]
	\centering
	\begin{minipage}{0.49\hsize}
		\centering
		\includegraphics[width=0.8\textwidth]{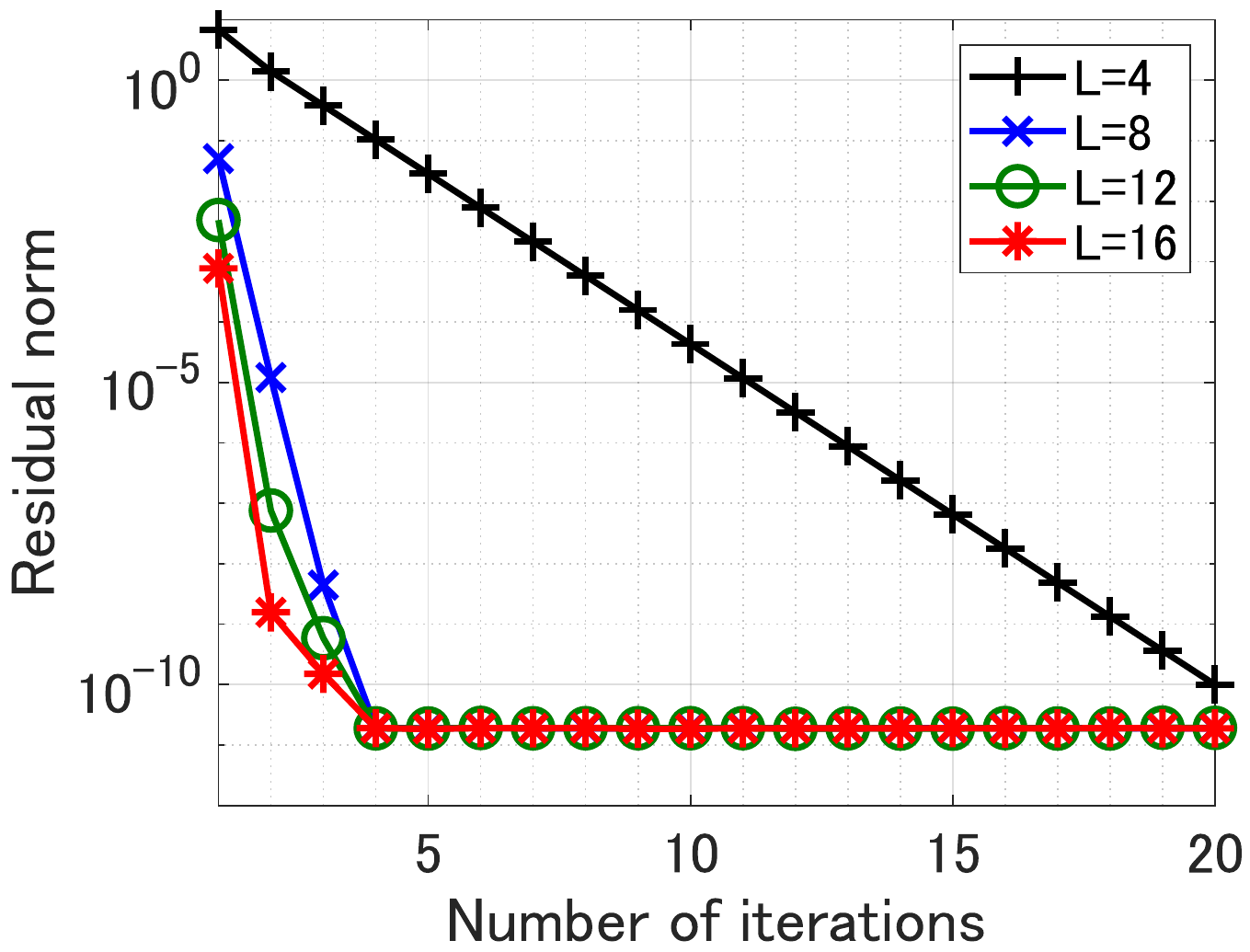}
		\subcaption{contFEAST varying $L$}
	\end{minipage}
	\begin{minipage}{0.49\hsize}
		\centering
		\includegraphics[width=0.8\textwidth]{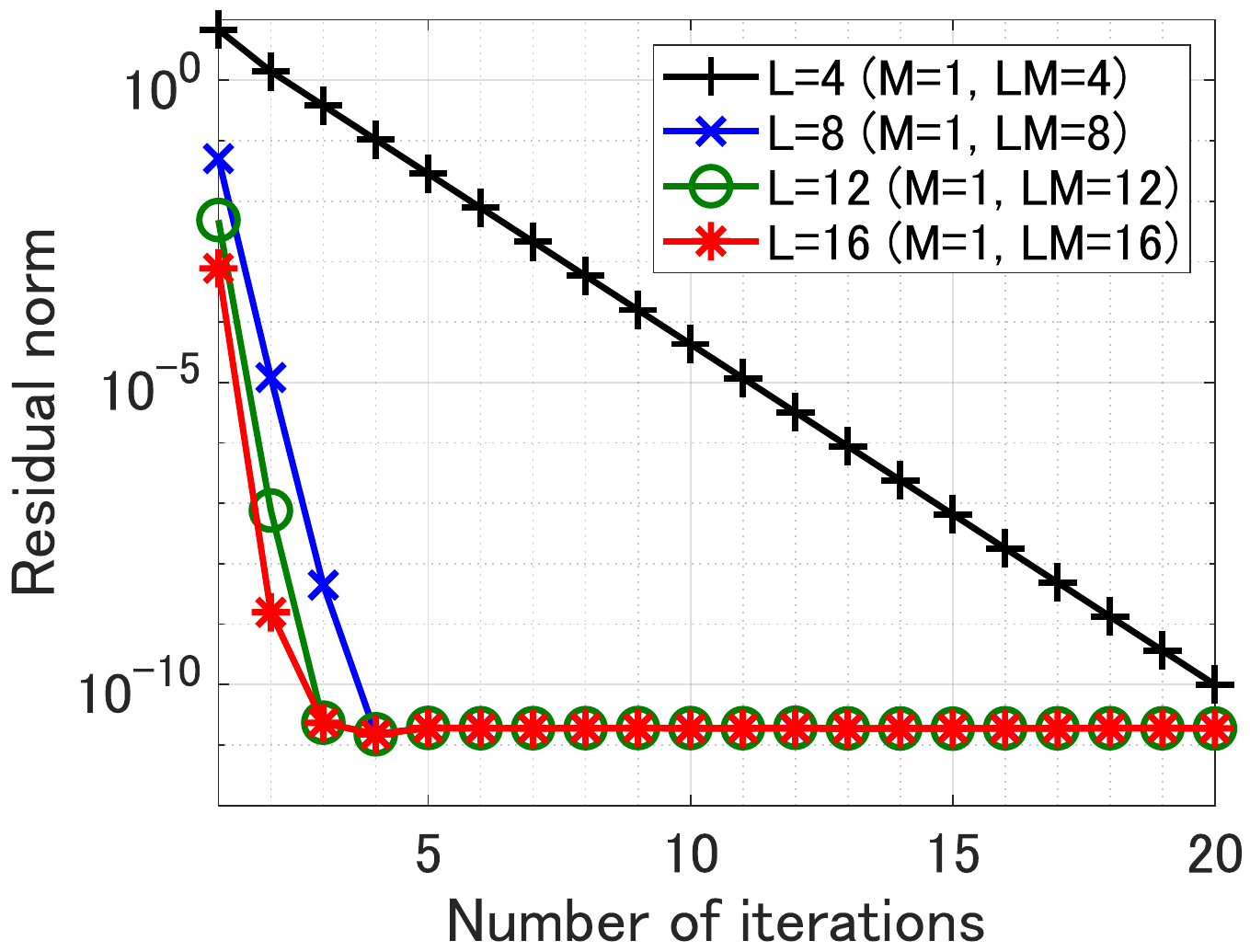}
		\subcaption{contSS-RR with $M=1$ varying $L$}
	\end{minipage}
	\begin{minipage}{0.49\hsize}
		\centering
		\includegraphics[width=0.8\textwidth]{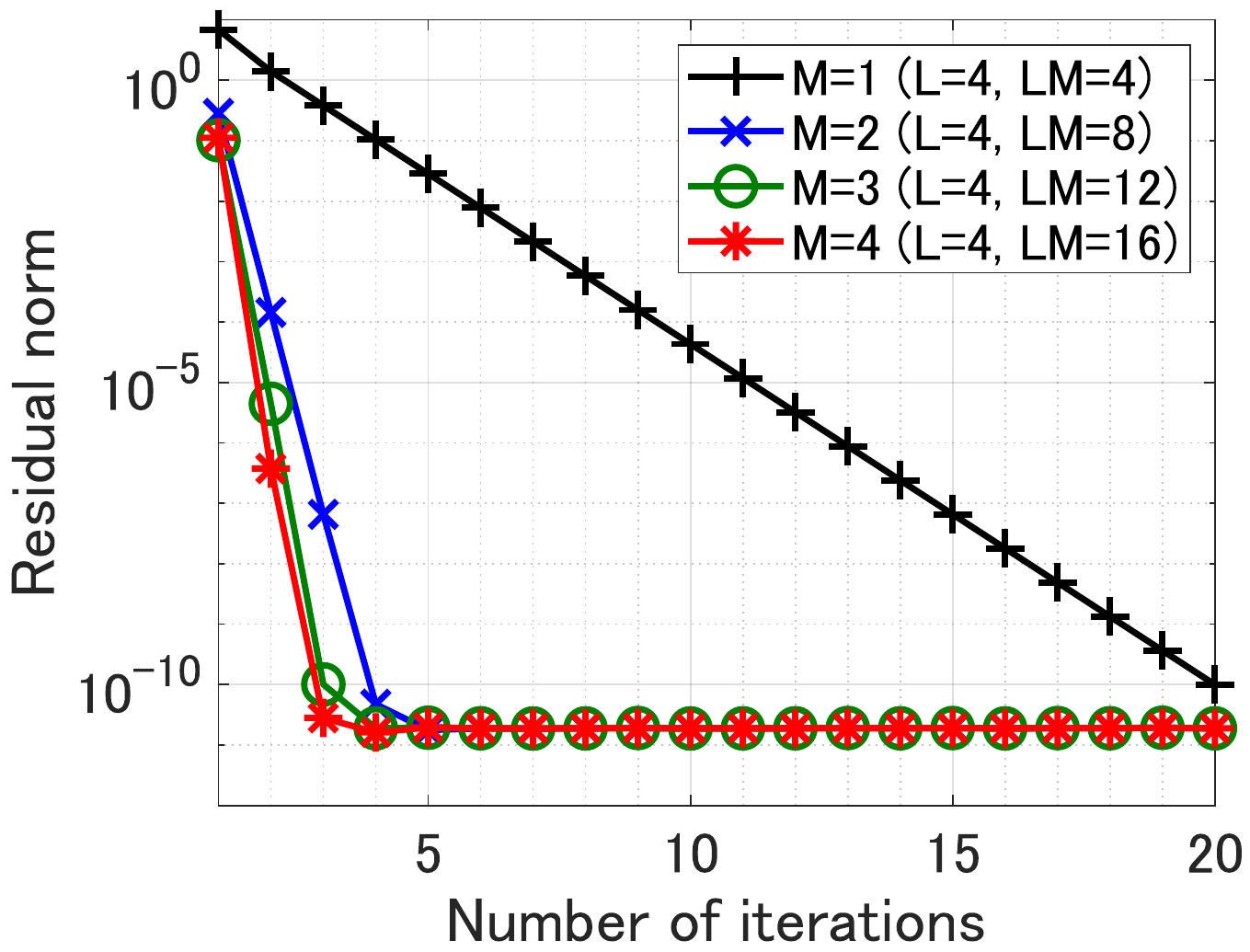}\\
		\subcaption{contSS-RR with $L=4$ varying $M$}
	\end{minipage}
	\begin{minipage}{0.49\hsize}
		\centering
		\includegraphics[width=0.8\textwidth]{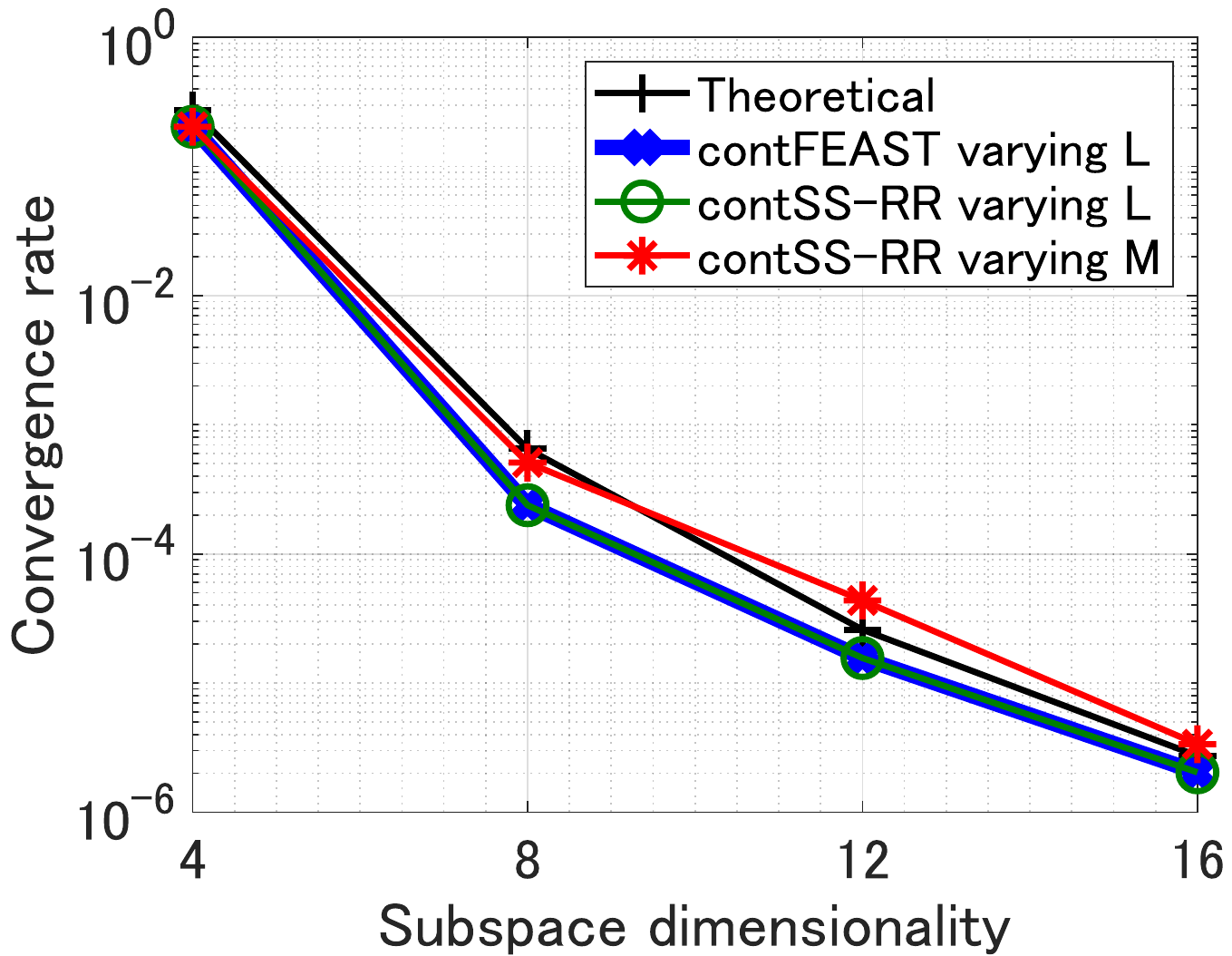}\\
		\subcaption{Convergence rate}
	\end{minipage}
	\caption{Convergence for the Laplace eigenvalue problem \eqref{eq:laplace}.}
	\label{fig:Laplace_iter}
\end{figure}
We evaluate the parameter dependence of contSS-RR with the subspace iteration technique and contFEAST on the convergence.
We computed the same four eigenpairs $\lambda_i \in [0,20]$ of \eqref{eq:laplace} using the same $(\gamma,\rho,\alpha) = (10,10,1)$ for the contour path as used in Section~4.1.
We evaluate the convergence of contSS-RR with $(L,N) = (4,4)$ varying $M = 1, 2, 3, 4$ and with $(M,N) = (1,4)$ varying $L=4, 8, 12, 16$, and contFEAST with $N=4$ varying $L=4, 8, 12, 16$.
\par
We show the residual history of each method in Fig.~\ref{fig:Laplace_iter}(a)--(c) regarding residual\break norm~$\| r_i \|_\mathcal{H} = \| \mathcal{A} \widehat{u}_i - \widehat{\lambda}_i \mathcal{B} \widehat{u}_i \|_\mathcal{H}$.
The convergences of contFEAST and contSS-RR with $M=1$ are almost identical and improve with an increase in $L$ (Fig.~\ref{fig:Laplace_iter}(a) and (b)).
We also observed that, in contSS-RR, increasing $M$ also improves the convergence to the same degree as increasing $L$ (Fig.~\ref{fig:Laplace_iter}(c)). 
\par
We also show in Fig.~\ref{fig:Laplace_iter}(d) the theoretical convergence rate obtained from Theorem~\ref{thm:error}, i.e., $\max_{\lambda_i \in \Omega} \lvert f_N(\lambda_{LM+1}) / f_N(\lambda_i) \rvert$, and the evaluated convergence rate of each method, i.e., the ratio of the residual norm between the first and second iterations.
Although, in contSS-RR, increasing $L$ indicates a slightly smaller convergence rate than increasing $M$, the both evaluated convergence rates are almost the same as the theoretical convergence rate obtained from Theorem~\ref{thm:error}.
\par
These results demonstrate that the proposed method with $M \geq 2$ achieves fast convergence even with a small value of $L$.
This contributes to the reduction in elapsed time, which will be shown in Section~4.3.
\subsection{Experiment III: performance for real-world problems}
Next, we evaluate the performances of the proposed methods without iteration ($\ell=1$) and compare them with those of contFEAST and the ``eigs'' function \cite{driscoll2008chebop} in Chebfun for the following six eigenvalue problems: two for computing real outermost eigenvalues, two for computing real interior eigenvalues and two for computing complex eigenvalues.
\begin{figure}[!t]
	\centering
	\begin{minipage}{0.49\hsize}
		\centering
		\includegraphics[width=0.8\textwidth]{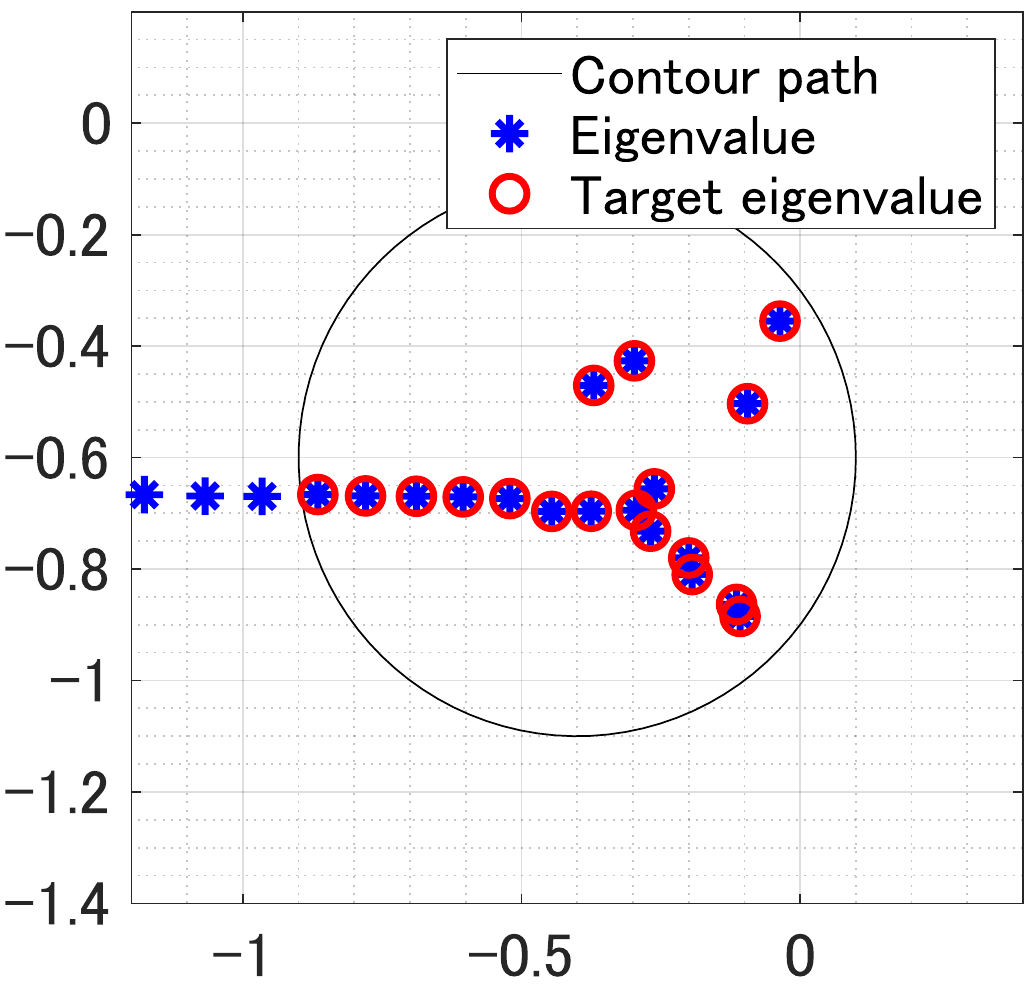}\\
		\subcaption{$Re=1000$}
	\end{minipage}
	\begin{minipage}{0.49\hsize}
		\centering
		\includegraphics[width=0.8\textwidth]{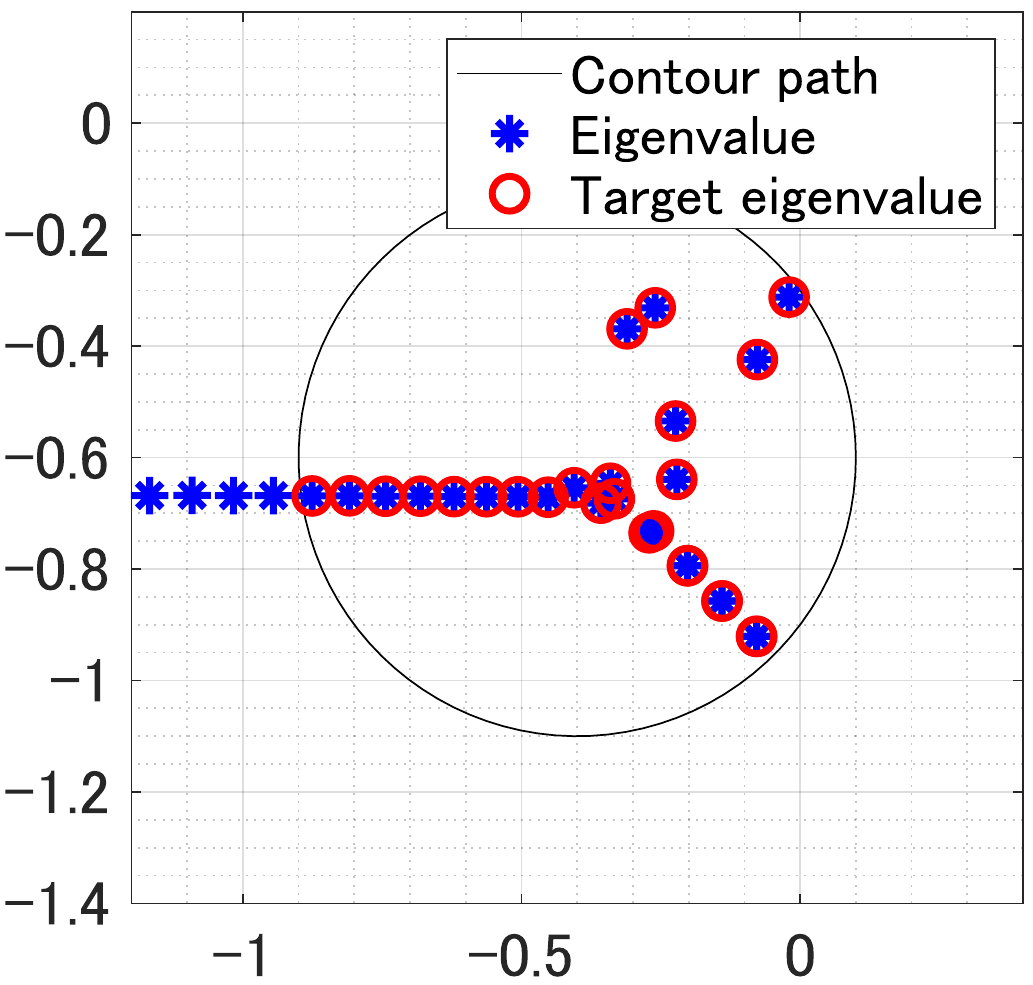}\\
		\subcaption{$Re=2000$}
	\end{minipage}
	\caption{Eigenvalues computed by the ``eigs'' function in Chebfun for the Orr--Sommerfeld eigenvalue problem.}
	\label{fig:OS_eig}
\end{figure}
\begin{itemize}
	\item \textbf{Real Outermost}: Mathieu eigenvalue problem \cite{Mathieu1868}:
	\begin{equation*}
		\left( - \frac{ \textrm{d}^2 }{ \textrm{d}x^2 } + 2 q \cos(2x) \right) u = \lambda u, \quad
		u(0) = u(\pi/2) = 0
	\end{equation*}
	with $q=2$.
	It has only real eigenvalues.
	We computed 15 eigenpairs corresponding to outermost eigenvalues $\lambda_i \in [0,1000]$.
	\item \textbf{Real Outermost}: Schr\"{o}dinger eigenvalue problem \cite[Chapter 6]{ExploringODEs2017}:
	\begin{equation*}
		\left( - \frac{\hbar}{2m} \frac{ \textrm{d}^2 }{ \textrm{d}x^2 } + V(x) \right) u = \lambda u, \quad
		u(-1) = u(1) = 0
	\end{equation*}
	with a double-well potential $V(x) = 1.5, x \in [-0.2, 0.3]$, where we set $\hbar/2m = 0.01$.
	It has only real eigenvalues.
	We computed 19 eigenpairs corresponding to outermost eigenvalues $\lambda_i \in [0,10]$.
	\item \textbf{Real Interior}: Bessel eigenvalue problem \cite{watson1995treatise}:
	\begin{equation*}
		\left( x^2 \frac{ \textrm{d}^2 }{ \textrm{d}x^2 } + x \frac{ \textrm{d} }{ \textrm{d}x} - \alpha^2 \right) u = - \lambda x^2 u, \quad
		u(0) = u(1) = 0
	\end{equation*}
	with $\alpha = 1$.
	It has only real eigenvalues.
	We computed 11 eigenpairs corresponding to interior eigenvalues $\lambda_i \in [500,3000]$.
	\item \textbf{Real Interior}: Sturm--Liouville-type eigenvalue problem:
	\begin{equation*}
		\left( - \frac{ \textrm{d}^2 }{ \textrm{d}x^2 } + x^2 \right) u = \lambda \cosh(x) u, \quad
		u(-1) = u(1) = 0,
	\end{equation*}
	which is used in \cite{horning2020feast}.
	It has only real eigenvalues.
	We computed 12 eigenpairs corresponding to interior eigenvalues $\lambda_i \in [200,1000]$.
	\item \textbf{Complex}: Orr--Sommerfeld eigenvalue problem \cite{schmid01:STS}:
	\begin{align*}
		&  \left\{ \frac{1}{ Re } \left(\frac{ \textrm{d}^2 }{ \textrm{d}x^2 } - \alpha^2\right)^2 - \textrm{i} \alpha \left[ U \left( \frac{ \textrm{d}^2}{ \textrm{d}x^2} - \alpha^2 \right) + U'' \right] \right\} u = \lambda \left( \frac{ \textrm{d}^2}{ \textrm{d}x^2} - \alpha^2 \right) u, \\
		&  u(-1) = u(1) = 0 
	\end{align*}
	with $\alpha = 1$ and $U = 1-x^2$.
	We solved two cases with $Re = 1000$ and $Re = 2000$. 
	They have complex eigenvalues.
	We computed 18 eigenpairs for $Re=1000$ and 28 eigenpairs for $Re=2000$ shown in Fig.~\ref{fig:OS_eig}.
\end{itemize}
Tables~\ref{table:path} and \ref{table:parameters} give the contour path and values of parameters for each problem.
	The ``eigs'' function in Chebfun with parameters $k$ and $\sigma$ computes $k$ closest eigenvalues to $\sigma$ and the corresponding eigenfunctions.
	We set the parameters $k$ and $\sigma$ to the number of input functions~$L$ of contFEAST in Table~\ref{table:parameters} and the center of contour path $\gamma$ in Table~\ref{table:path}, respectively.
\par
Figs.~\ref{fig:ex3_RE_res} and \ref{fig:ex3_CO_res} show the residual norms $\| r_i \|_\mathcal{H} = \| \mathcal{A} \widehat{u}_i - \widehat{\lambda}_i \mathcal{B} \widehat{u}_i \|_\mathcal{H}$ for each problem and Fig.~\ref{fig:ex3_time} shows the elapsed times for each problem.
In Fig.~\ref{fig:ex3_time}, ``Solve ODEs'', ``Orthonormalization'', ``Matrix Eig'', and ``MISC'' denote the elapsed times for solving ODEs \eqref{eq:ode}, orthonormalization of the column vectors of $\widehat{S}$, construction and solution of the matrix eigenvalue problem, and other parts including computation of the contour integral, respectively.
\begin{table}[t]
	\centering
		\caption{Contour path and the number of target eigenpairs.}
		\label{table:path}
		\begin{tabular}{lrrrc}  \toprule
			\multicolumn{1}{c}{Problem} & \multicolumn{3}{c}{Contour path} & \multicolumn{1}{c}{\# eigs}\\ 
			& \multicolumn{1}{c}{$\gamma$} & \multicolumn{1}{c}{$\rho$} & \multicolumn{1}{c}{$\alpha$} & \multicolumn{1}{c}{$m$} \\ \midrule
			Mathieu          & $500$ & $500$ & $0.1$ & $15$ \\ 
			Schr\"{o}dinger  & $5$   & $5$   & $0.1$ & $19$ \\
			Bessel           & $1750$ & $1250$ & $0.1$ & $11$ \\ 
			Sturm--Liouville & $600$ & $400$ & $0.1$ & $12$ \\ 
			Orr--Sommerfeld ($Re=1000$) & $-0.4 - 0.6 \textrm{i}$ & $0.5$ & $1.0$ & $18$ \\ 
			Orr--Sommerfeld ($Re=2000$) & $-0.4 - 0.6 \textrm{i}$ & $0.5$ & $1.0$ & $28$ \\ 
			\bottomrule
		\end{tabular}
\end{table}
\begin{table}[t]
	\centering
		\caption{Parameters.}
		\label{table:parameters}
		\begin{tabular}{lrrrrrr}  \toprule
			\multicolumn{1}{c}{Problem} & \multicolumn{6}{c}{Parameters}  \\ 
			& \multicolumn{3}{c}{contSS}  & \multicolumn{3}{c}{contFEAST} \\ \cmidrule(rl){2-4} \cmidrule(rl){2-4} \cmidrule(rl){5-7}
			& \multicolumn{1}{c}{$L$} & \multicolumn{1}{c}{$M$} & \multicolumn{1}{c}{$N$} & \multicolumn{1}{c}{$L$} & \multicolumn{1}{c}{$N$} & \multicolumn{1}{c}{$\ell$} \\ \midrule
			Mathieu          & $5$  & $8$ & $16$ & $20$ & $16$ & $1$--$3$ \\ 
			Schr\"{o}dinger  & $5$  & $8$ & $16$ & $20$ & $16$ & $1$--$3$ \\
			Bessel           & $5$  & $8$ & $16$ & $15$ & $16$ & $1$--$3$ \\ 
			Sturm--Liouville & $5$  & $8$ & $16$ & $15$  & $16$ & $1$--$3$ \\ 
			Orr--Sommerfeld ($Re=1000$) & $10$ & $8$ & $32$ & $20$ & $32$ & $1$--$3$ \\ 
			Orr--Sommerfeld ($Re=2000$) & $20$ & $8$ & $32$ & $40$ & $32$ & $1$--$3$ \\ 
			\bottomrule
		\end{tabular}
\end{table}
\par
First, we discuss the accuracy of the presented methods.
For the real outermost and interior problems (Fig.~\ref{fig:ex3_RE_res}), the residual norms of contFEAST decrease with more iterations reaching $\| r_i \|_\mathcal{H} \approx 10^{-10}$ at $\ell=3$ for the target eigenpairs.
ContSS-RR and contSS-CAA demonstrate almost the same high accuracy ($\| r_i \|_\mathcal{H} \approx 10^{-10}$) as contFEAST with $\ell=3$; on the other hand, contSS-Hankel shows lower accuracy than the others except for the Bessel eigenvalue problem.
The residual norms for the eigenvalues outside the target region tend to be large depending on the distance from the target region.
The experimental results exhibit a similar trend for both outermost and interior problems. 
\begin{figure}[!pt]
	\centering
	\begin{minipage}{0.99\hsize}
		\centering
		\includegraphics[width=0.4\textwidth]{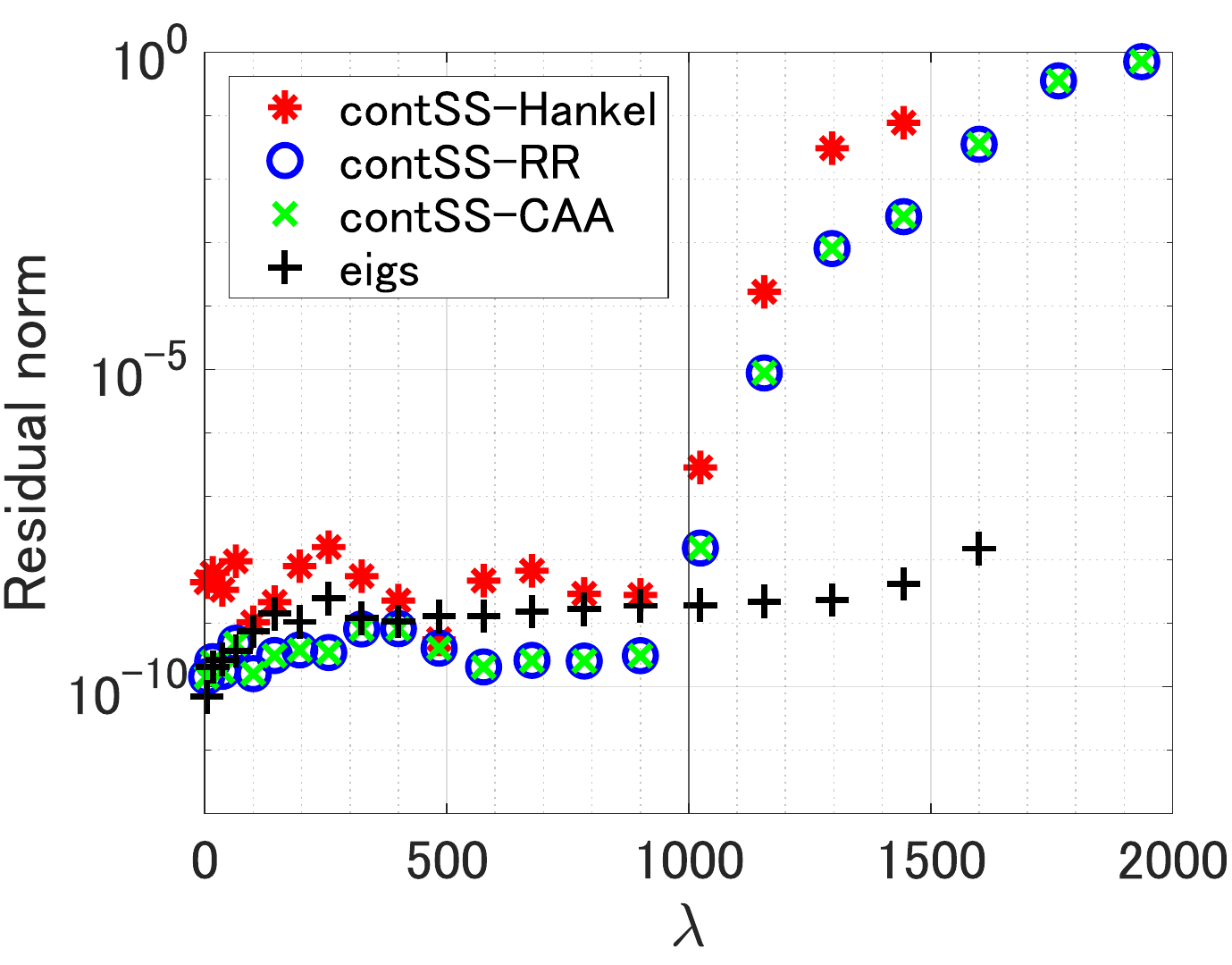}
		\hspace{0.08\textwidth}
		\includegraphics[width=0.4\textwidth]{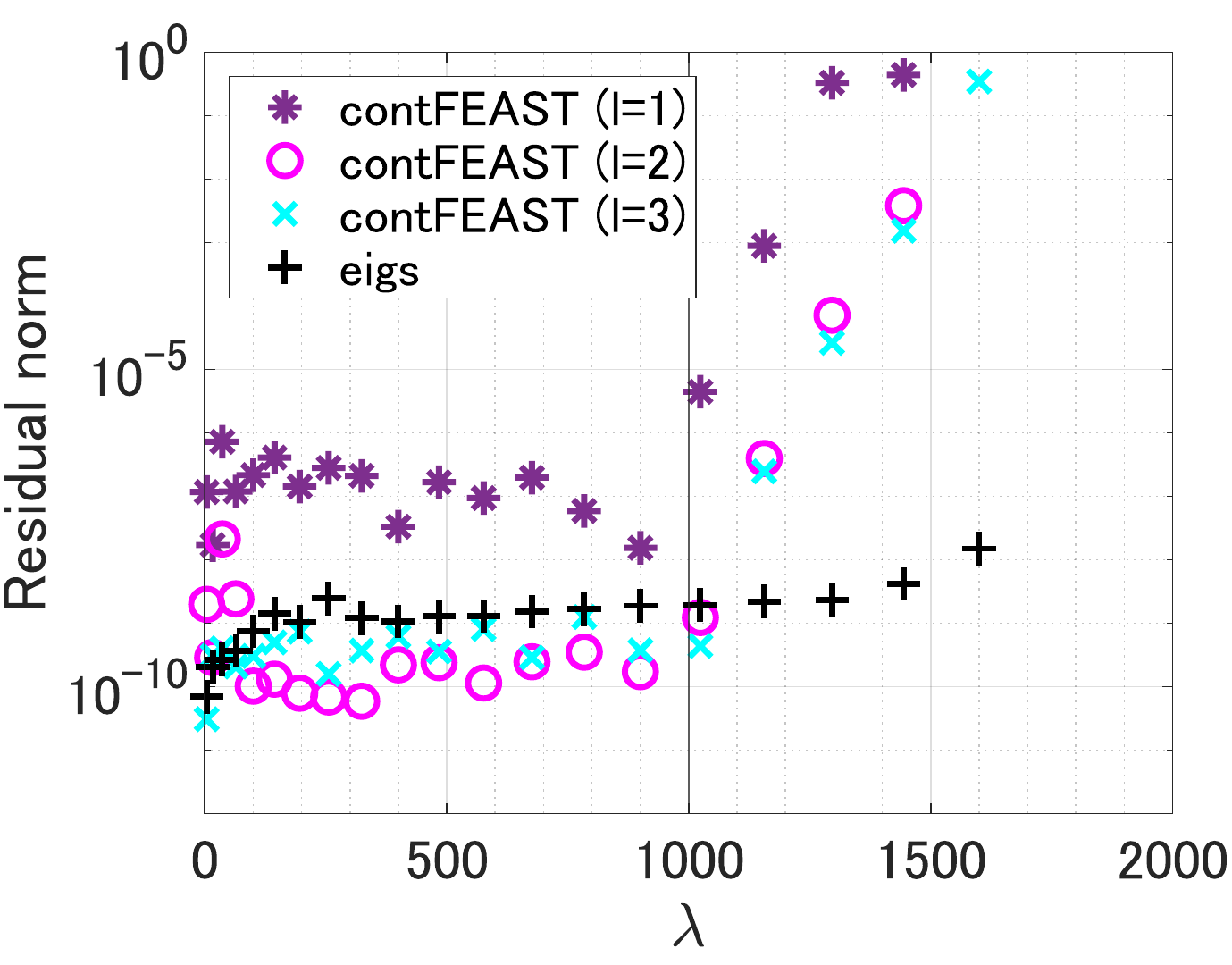}
		\subcaption{\textbf{Real {Outermost}:} Mathieu eigenvalue problem}
	\end{minipage}
	\begin{minipage}{0.99\hsize}
		\centering
		\includegraphics[width=0.4\textwidth]{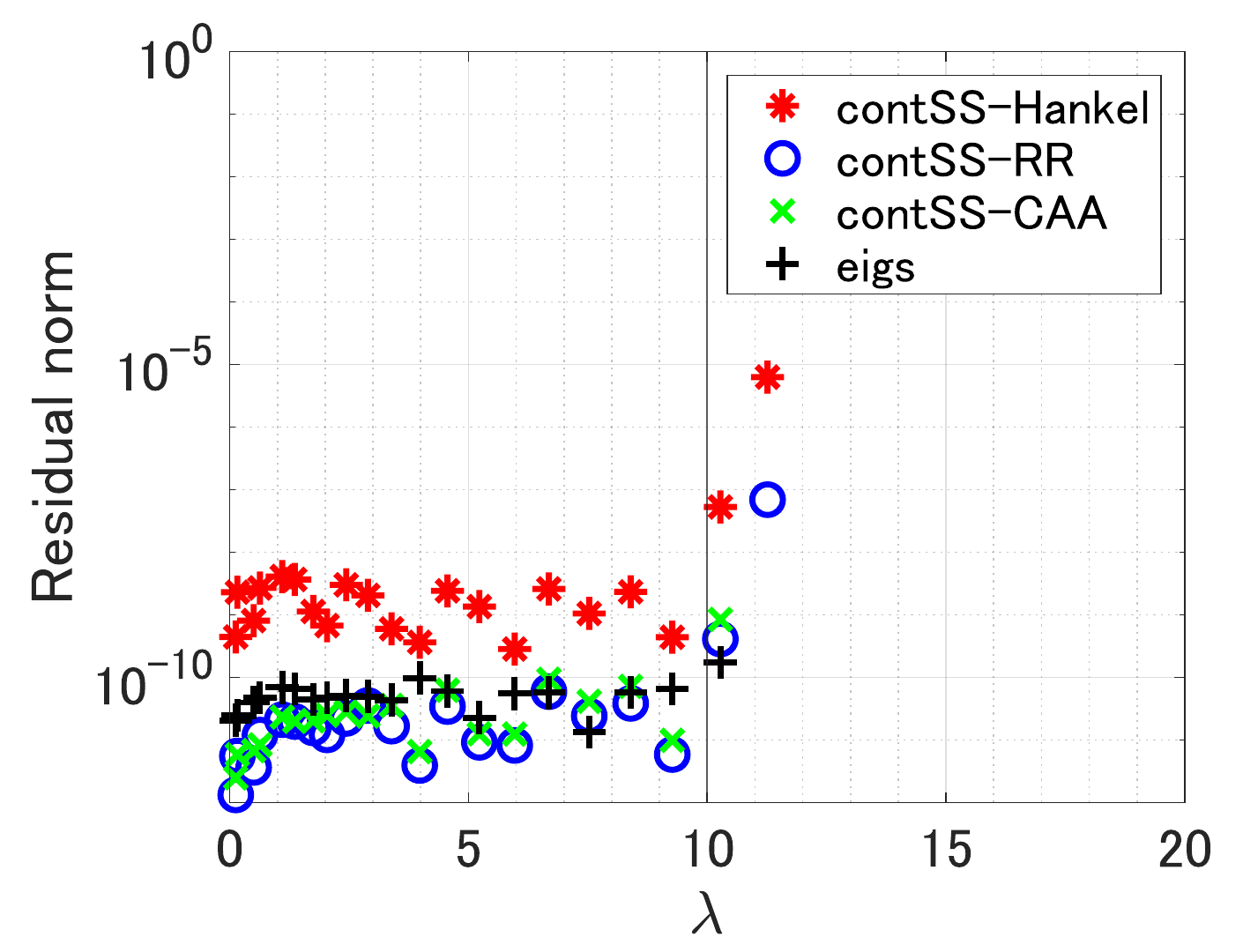}
		\hspace{0.08\textwidth}
		\includegraphics[width=0.4\textwidth]{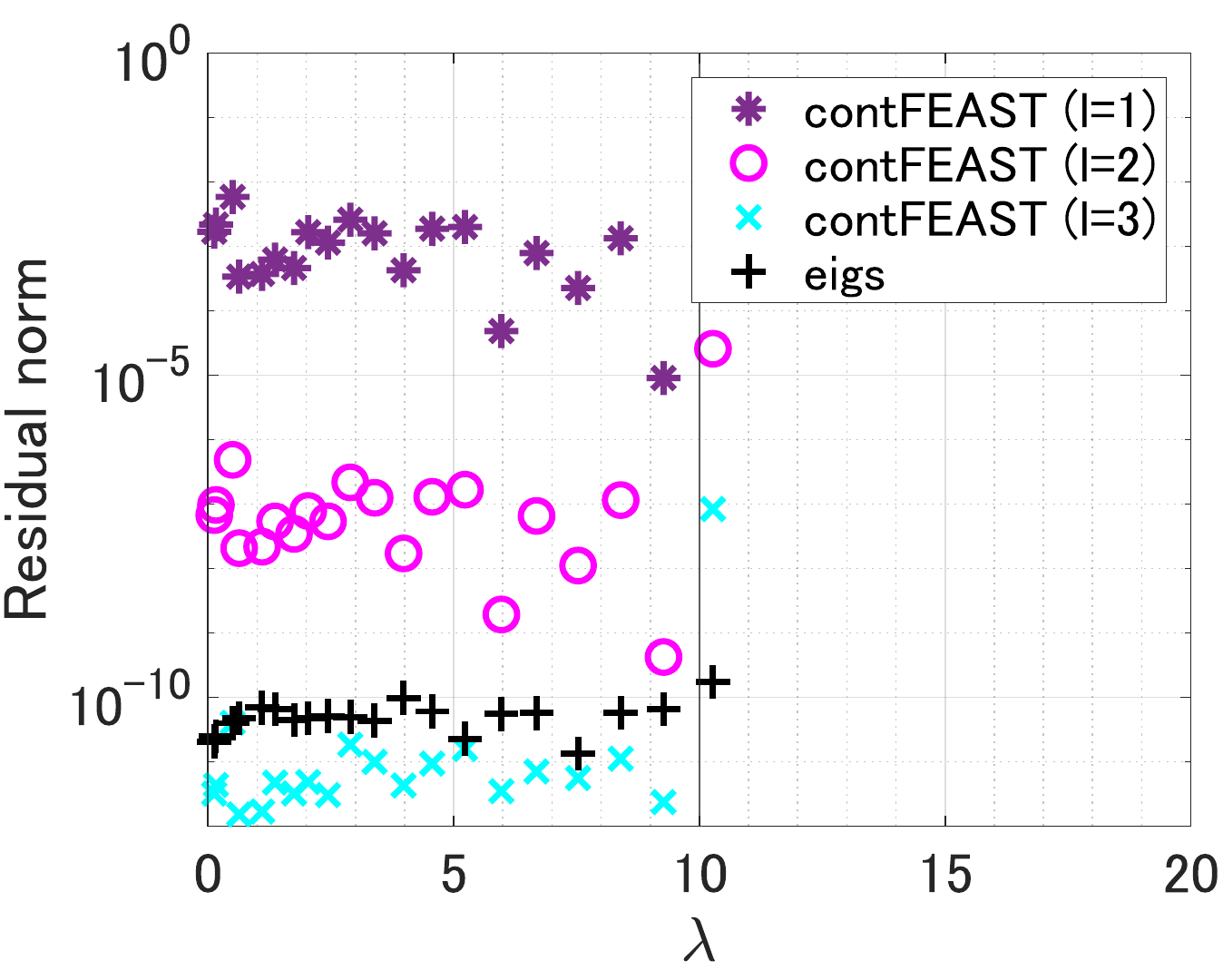}
		\subcaption{\textbf{Real {Outermost}:} Schr\"{o}dinger eigenvalue problem}
	\end{minipage}
	\begin{minipage}{0.99\hsize}
		\centering
		\includegraphics[width=0.4\textwidth]{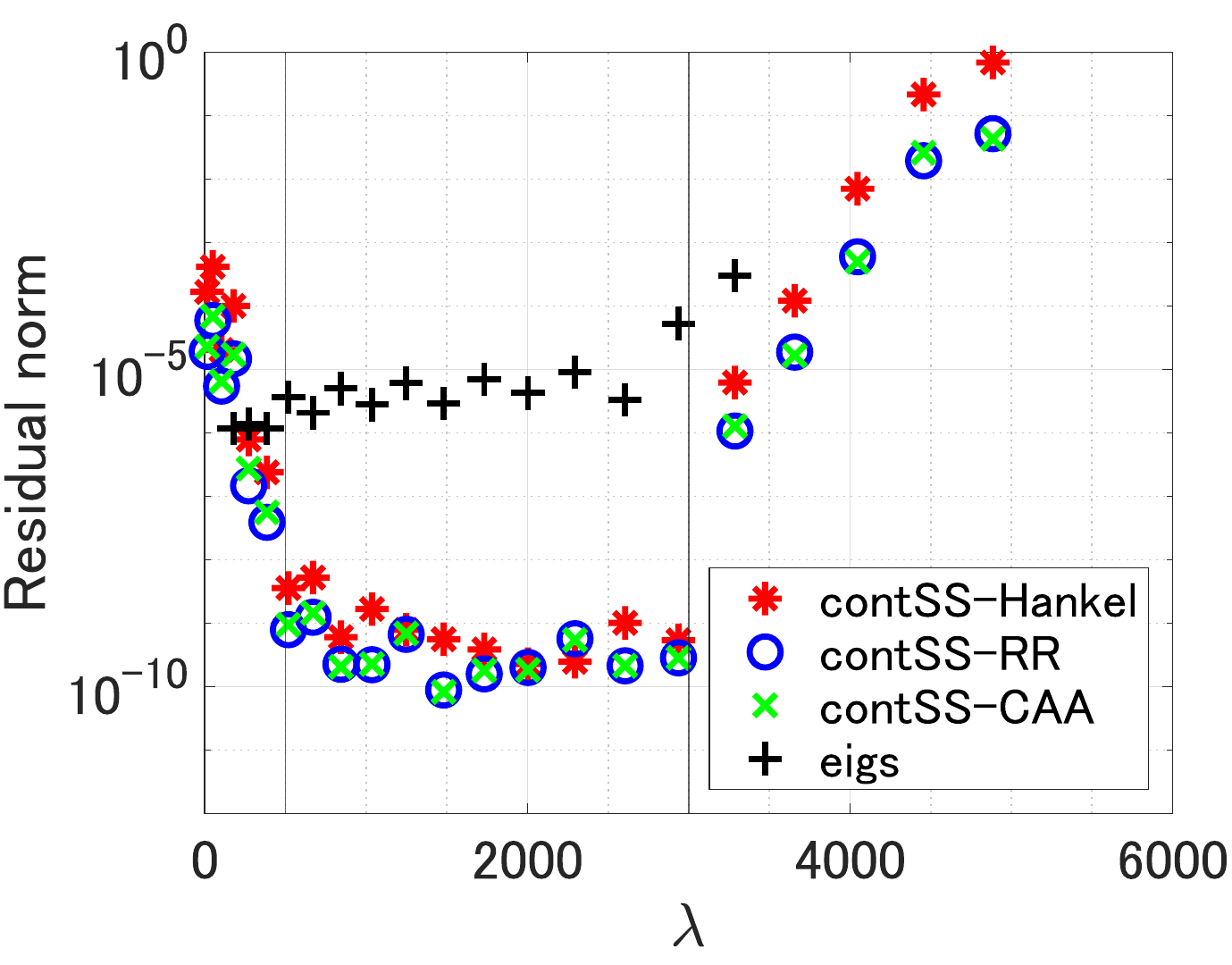}
		\hspace{0.08\textwidth}		
		\includegraphics[width=0.4\textwidth]{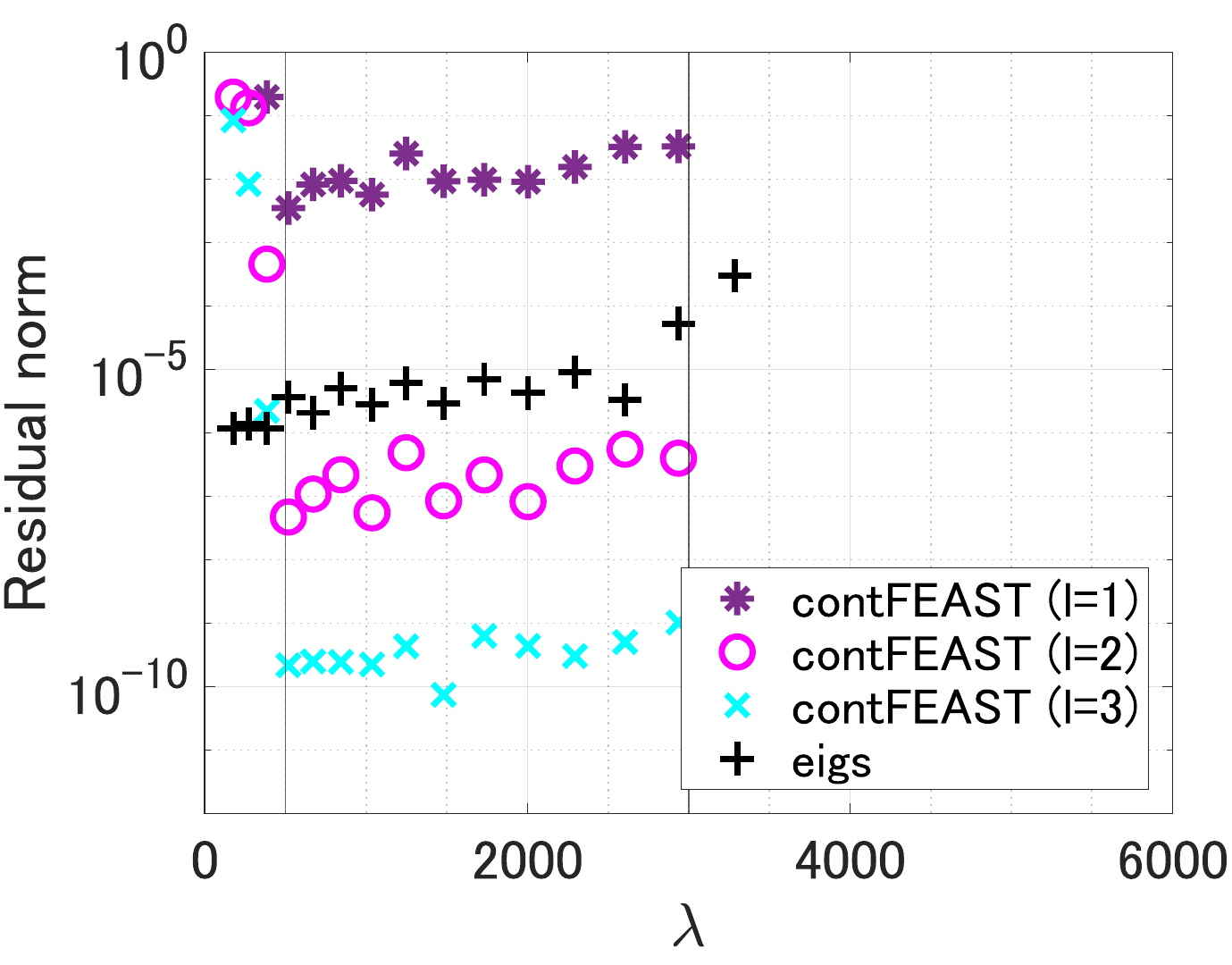}
		\subcaption{\textbf{Real Interior:} Bessel eigenvalue problem}
	\end{minipage}
	\begin{minipage}{0.99\hsize}
		\centering
		\includegraphics[width=0.4\textwidth]{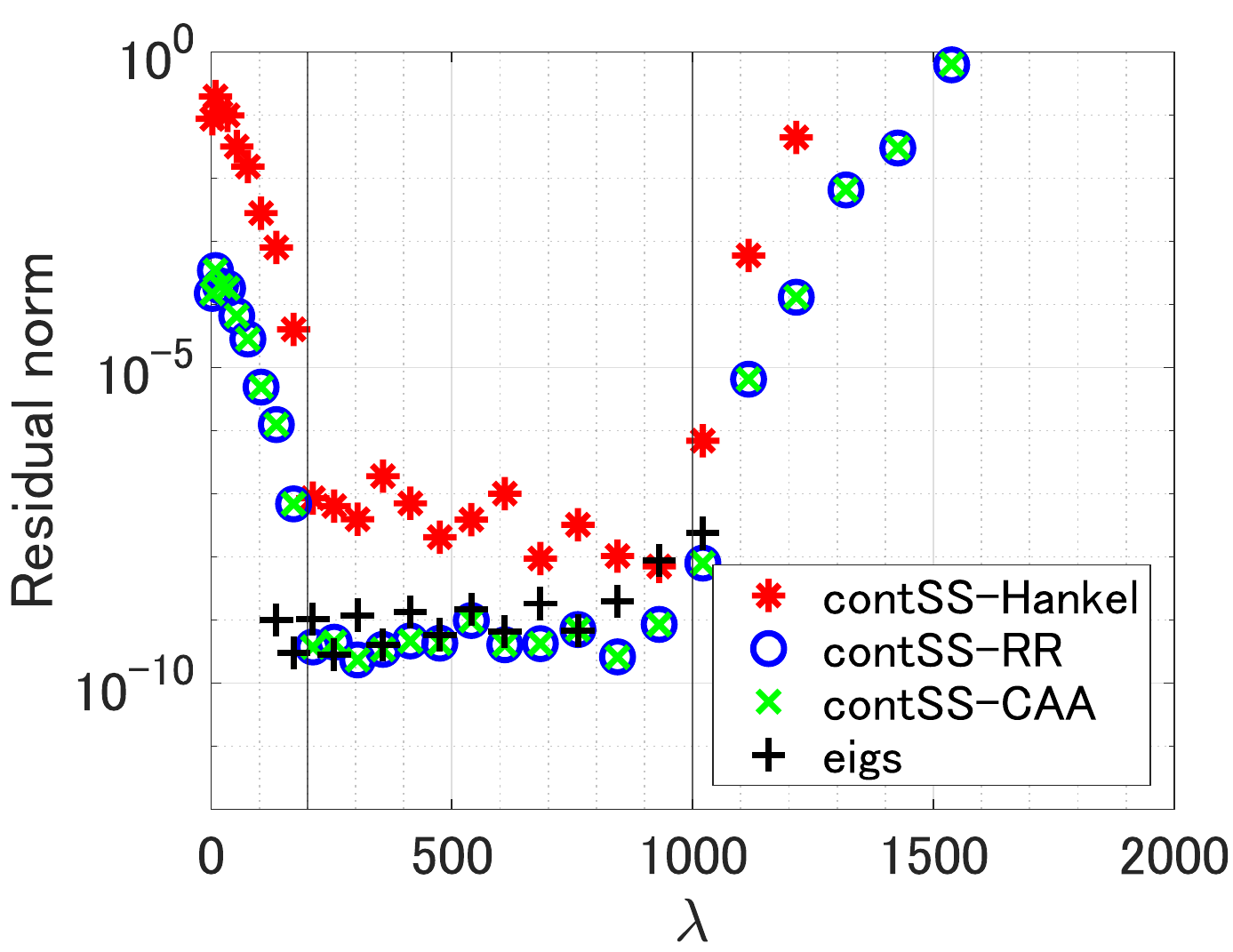}
		\hspace{0.08\textwidth}		
		\includegraphics[width=0.4\textwidth]{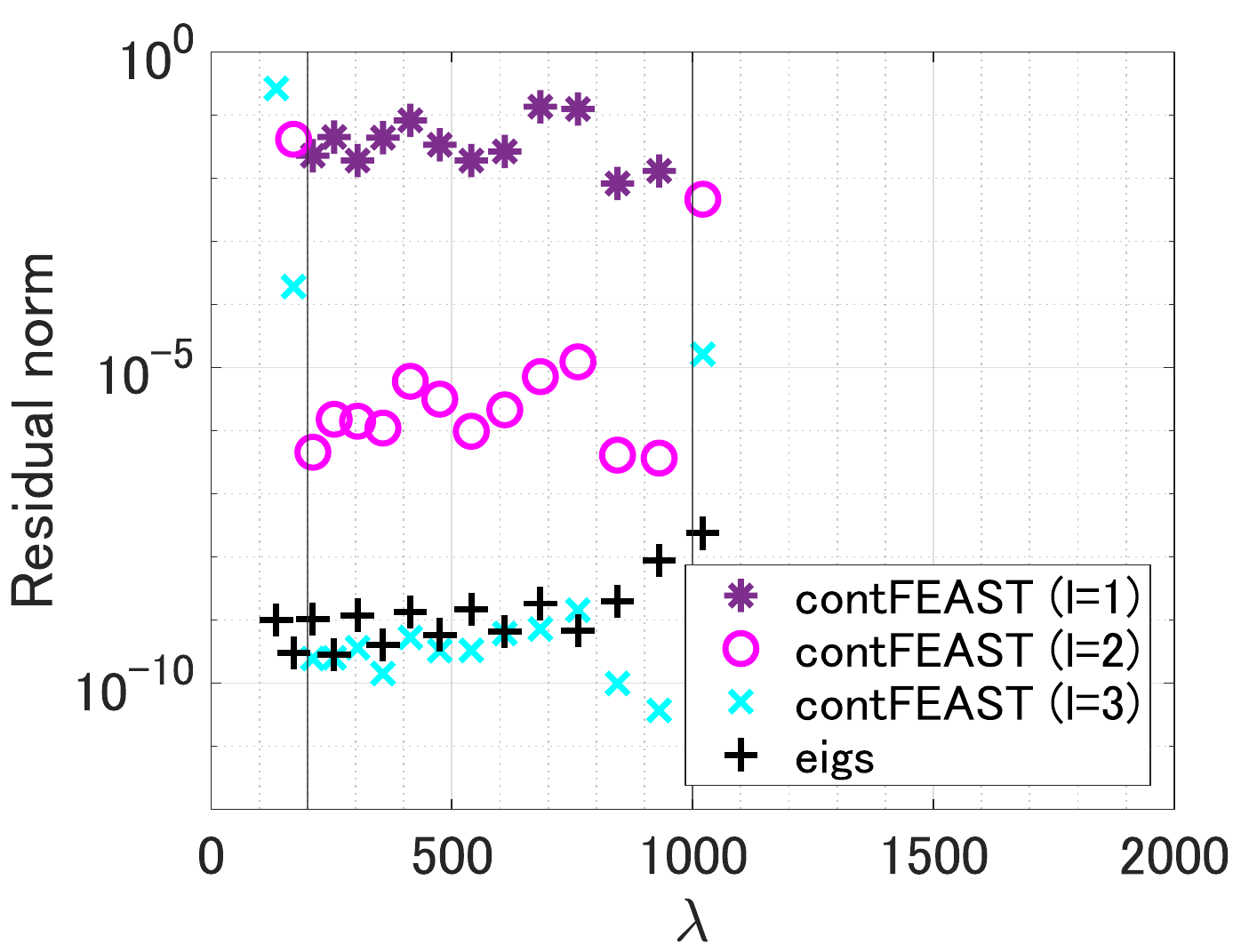}
		\subcaption{\textbf{Real Interior:} Sturm--Liouville-type eigenvalue problem}
	\end{minipage}
	\caption{Residual norm for real outermost and interior problems.}
	\label{fig:ex3_RE_res}
\end{figure}
\begin{figure}[!t]
	\centering
	\begin{minipage}{0.99\hsize}
		\centering
		\includegraphics[width=0.4\textwidth]{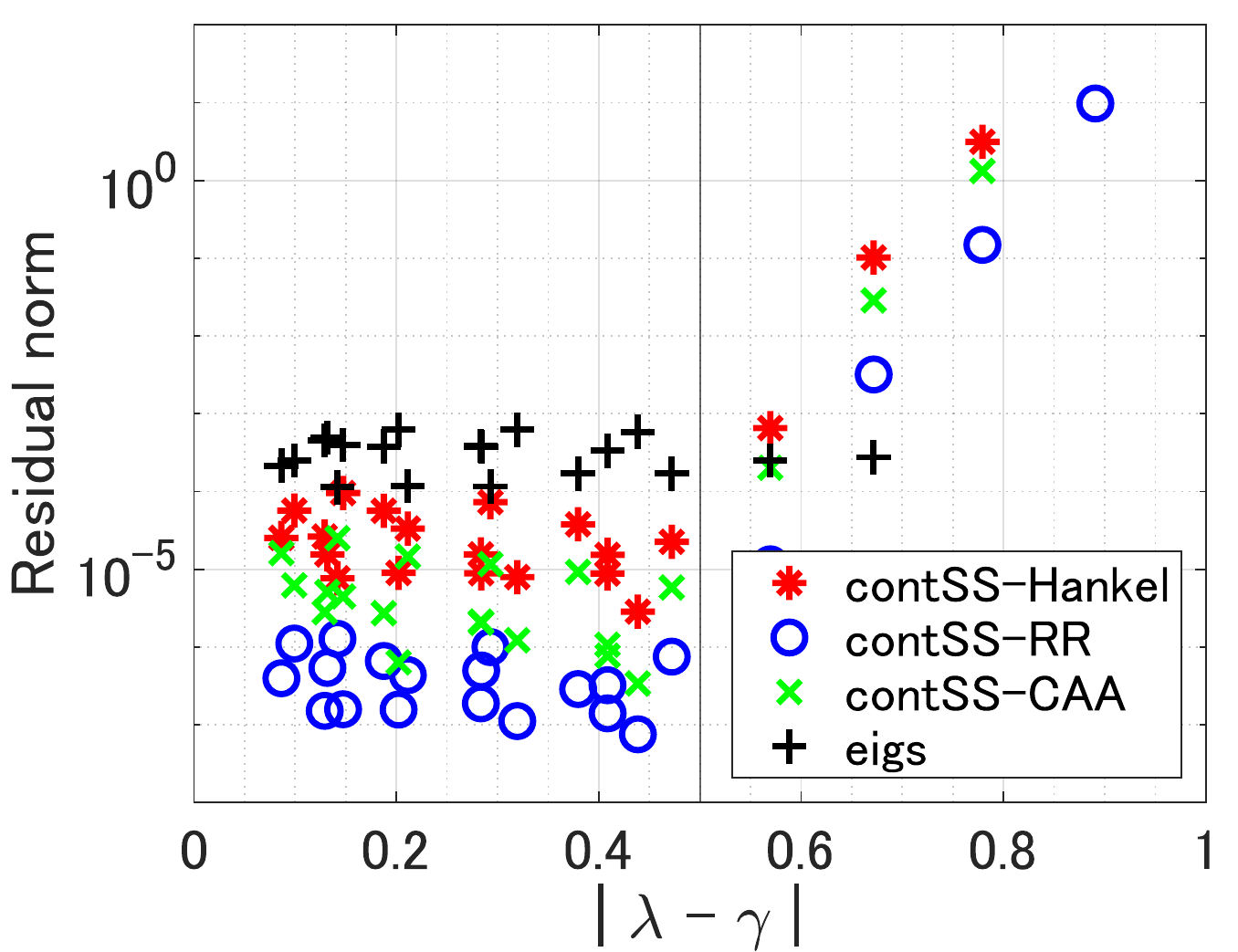}
		\hspace{0.08\textwidth}	
		\includegraphics[width=0.4\textwidth]{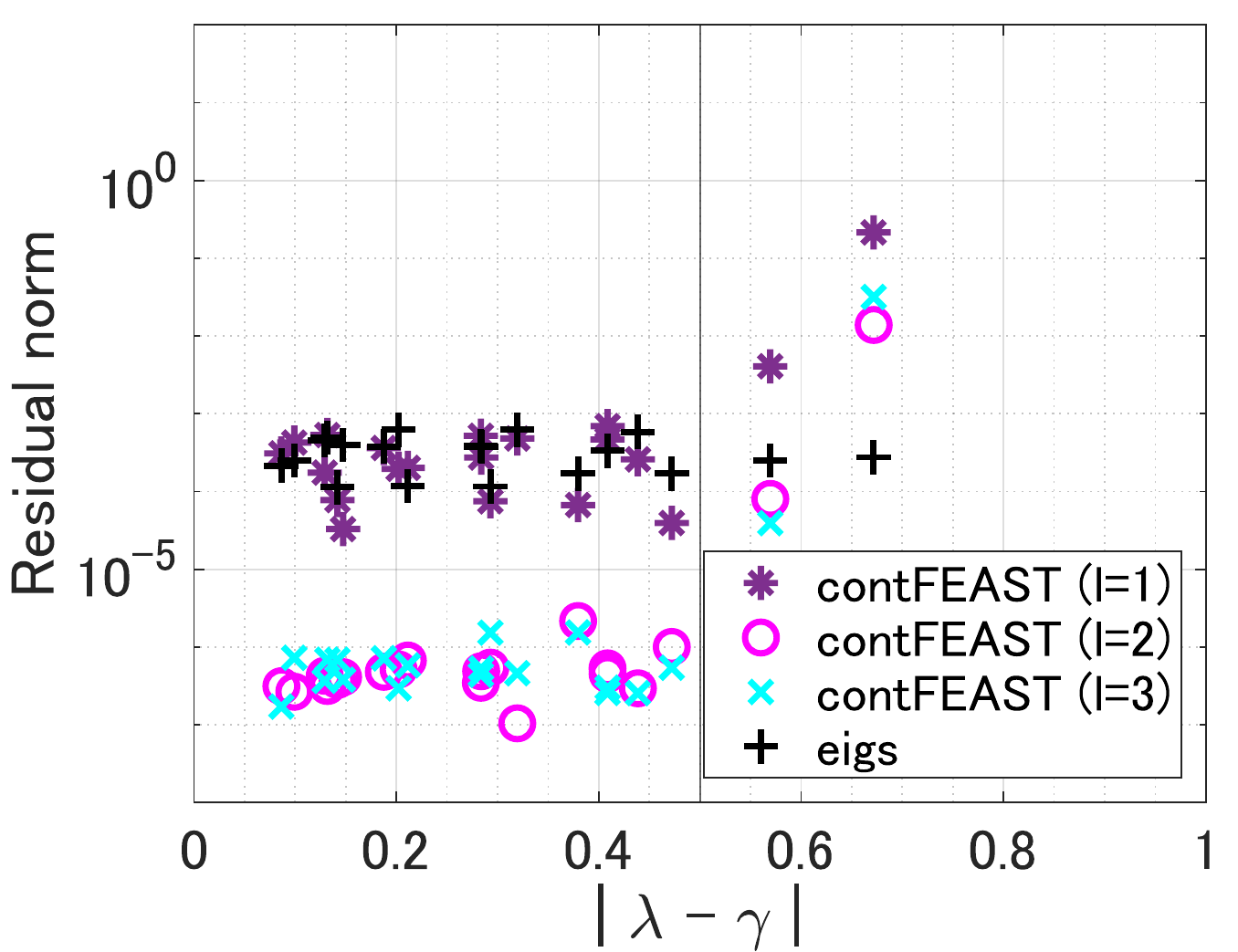}
		\subcaption{\textbf{Complex:} Orr--Sommerfeld eigenvalue problem ($Re = 1000$)}
	\end{minipage}
	\begin{minipage}{0.99\hsize}
		\centering
		\includegraphics[width=0.4\textwidth]{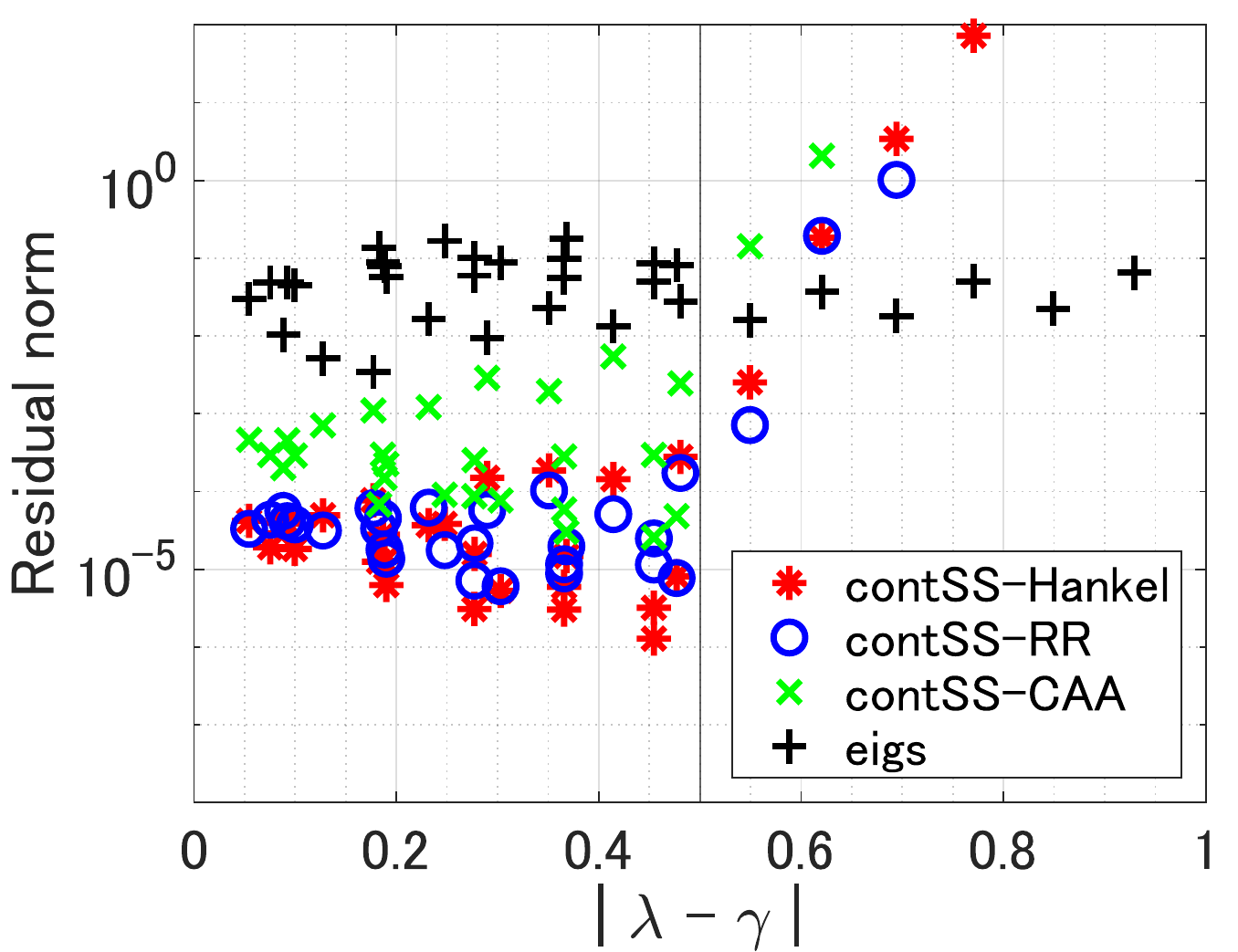}
		\hspace{0.08\textwidth}			
		\includegraphics[width=0.4\textwidth]{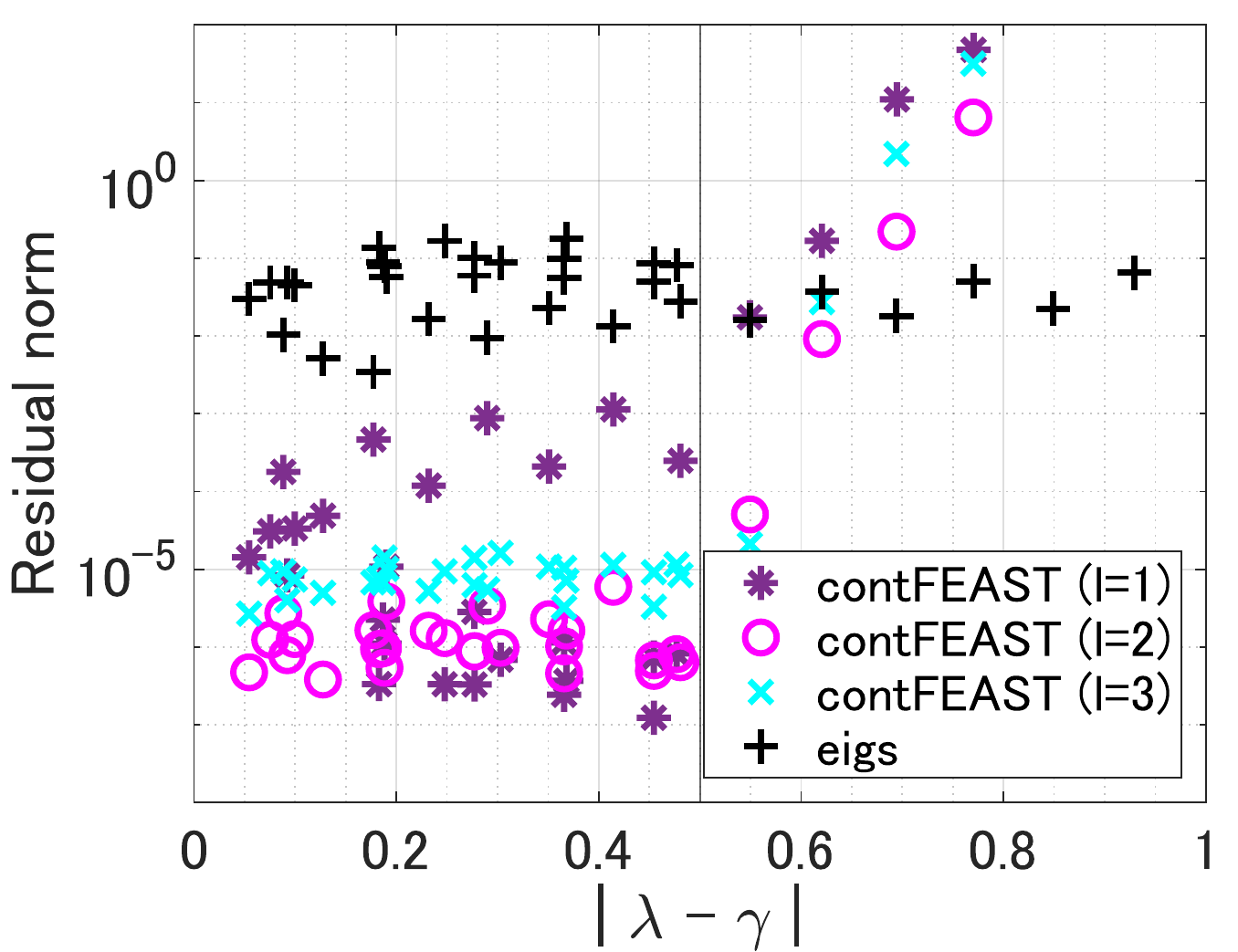}
		\subcaption{\textbf{Complex:} Orr--Sommerfeld eigenvalue problem ($Re = 2000$)}
	\end{minipage}
	\caption{Residual norm for complex problems.}
	\label{fig:ex3_CO_res}
\end{figure}
\begin{figure}[!t]
	\centering
	\begin{minipage}{0.49\hsize}
		\centering
		\includegraphics[width=0.8\textwidth]{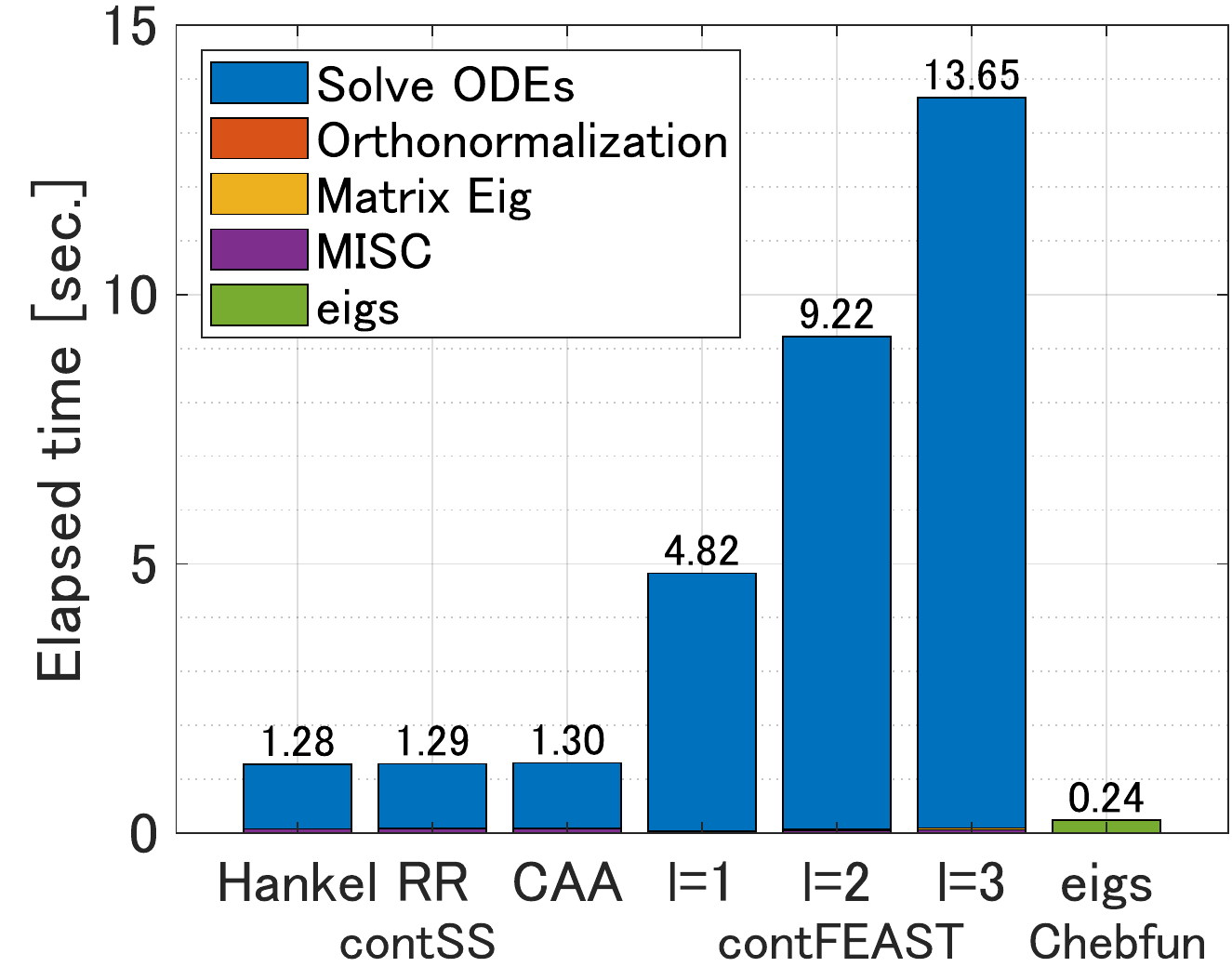}
		\subcaption{Mathieu eigenvalue problem}
	\end{minipage}
	\begin{minipage}{0.49\hsize}
		\centering
		\includegraphics[width=0.8\textwidth]{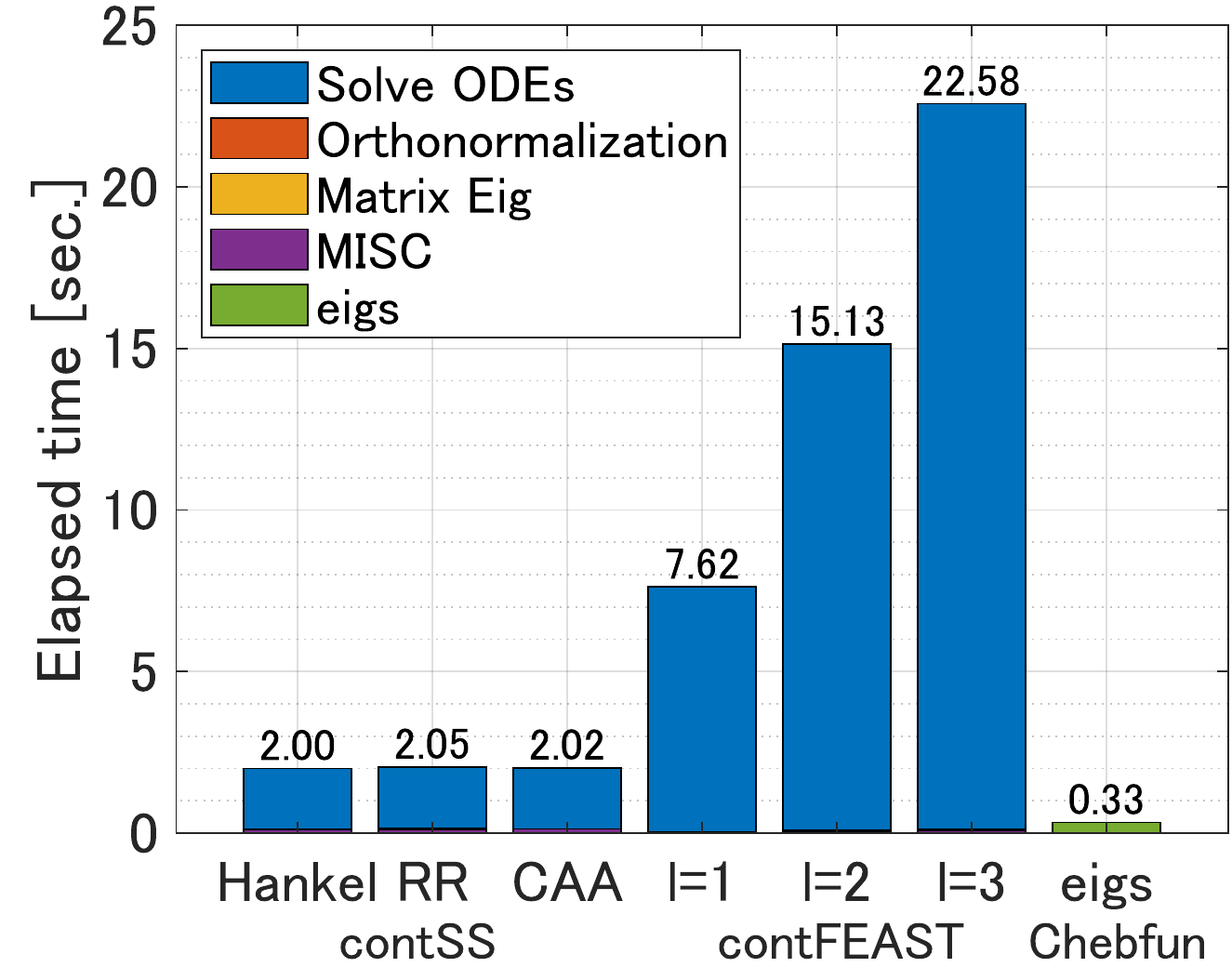}
		\subcaption{Schr\"{o}dinger eigenvalue problem}
	\end{minipage}
	\begin{minipage}{0.49\hsize}
		\centering
		\includegraphics[width=0.8\textwidth]{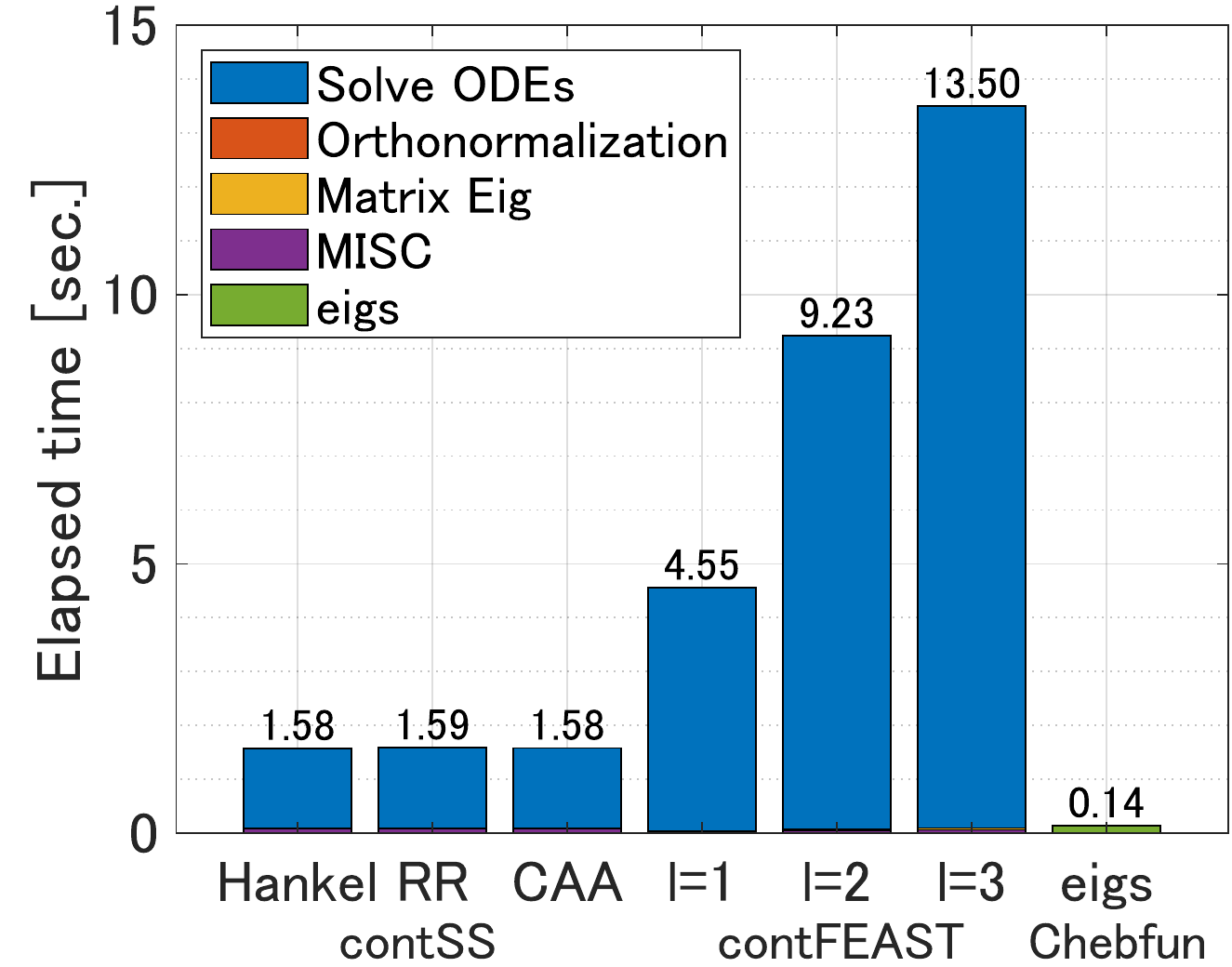}
		\subcaption{Bessel eigenvalue problem}
	\end{minipage}
	\begin{minipage}{0.49\hsize}
		\centering
		\includegraphics[width=0.8\textwidth]{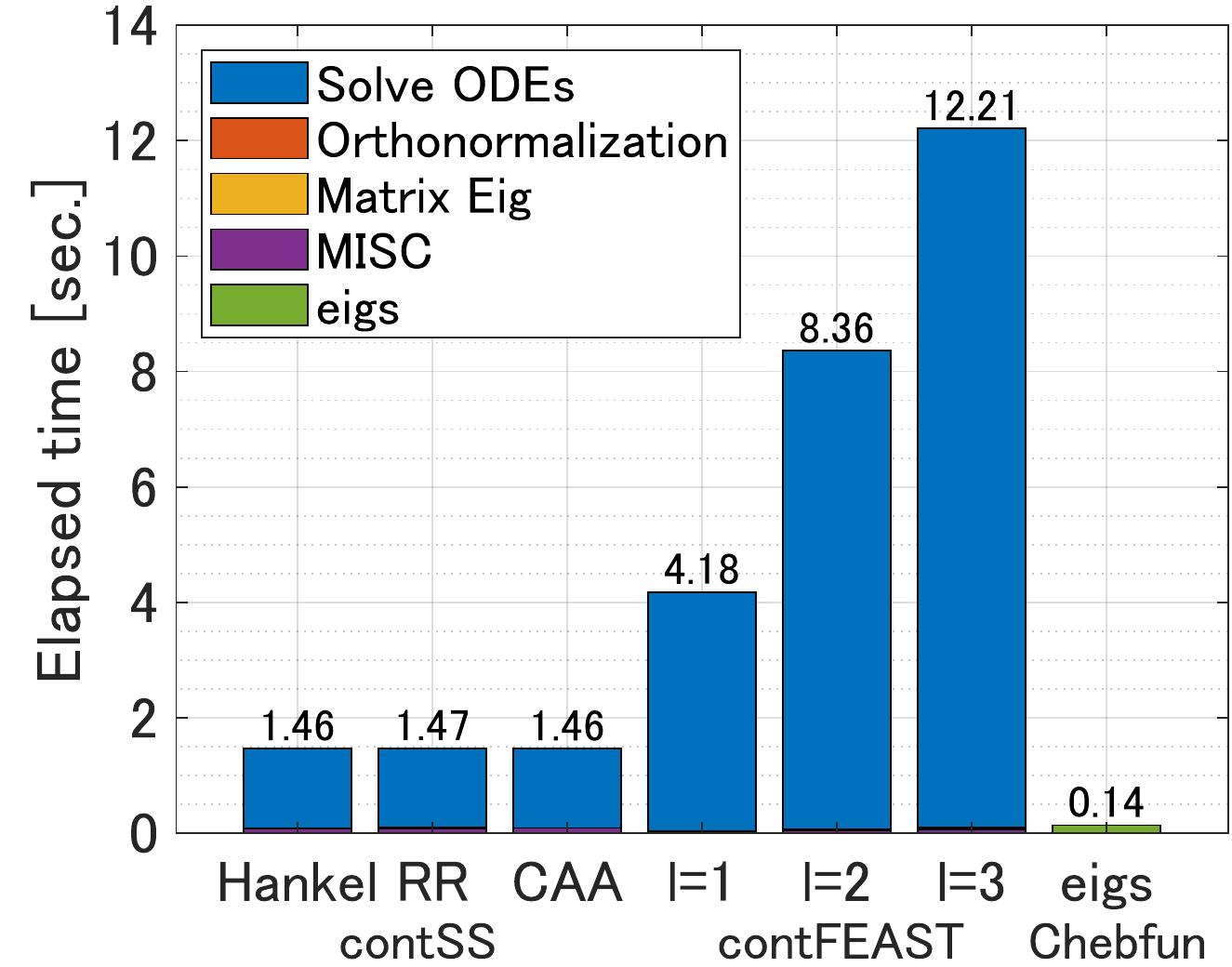}
		\subcaption{Sturm--Liouville-type eigenvalue problem}
	\end{minipage}
	\begin{minipage}{0.49\hsize}
		\centering
		\includegraphics[width=0.8\textwidth]{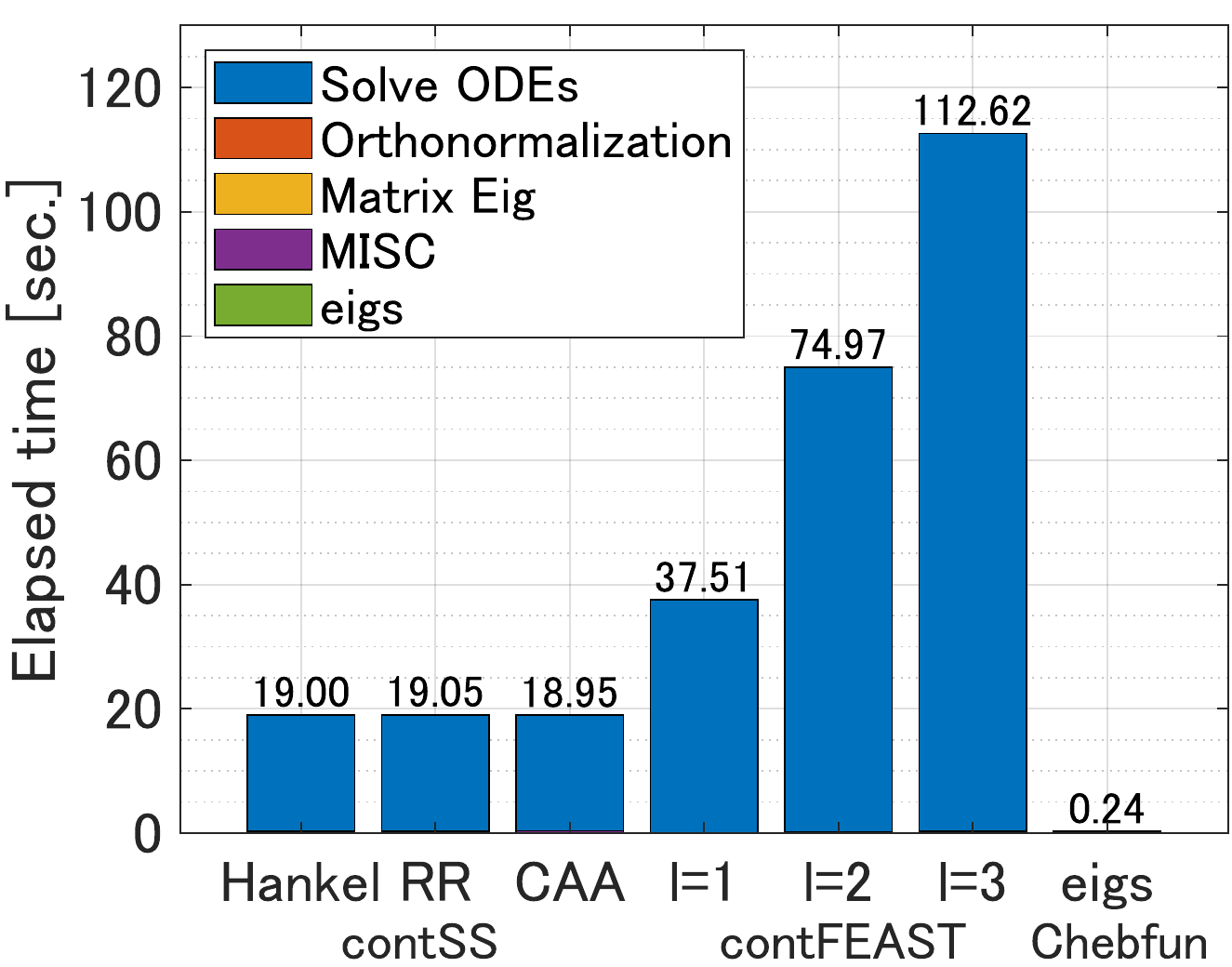}
		\subcaption{Orr--Sommerfeld eigenvalue problem ($Re=1000$)}
	\end{minipage}
	\begin{minipage}{0.49\hsize}
		\centering
		\includegraphics[width=0.8\textwidth]{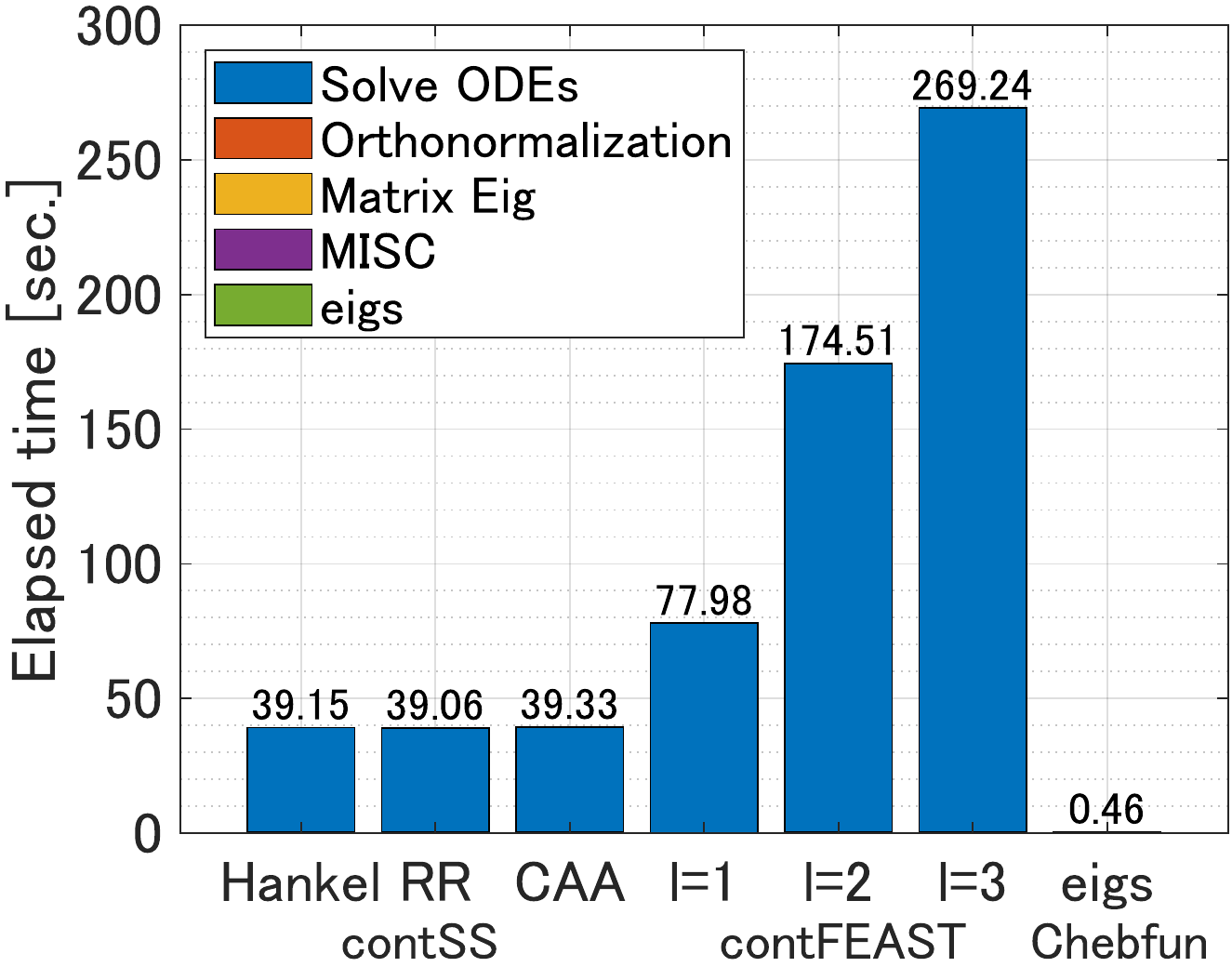}
		\subcaption{Orr--Sommerfeld eigenvalue problem ($Re=2000$)}
	\end{minipage}
	\caption{Elapsed time for each problem.}
	\label{fig:ex3_time}
\end{figure}
\par
For the complex problems (Fig.~\ref{fig:ex3_CO_res}), the residual norms of contFEAST stagnate at $\| r_i \|_\mathcal{H} \approx 10^{-7}$ for $Re=1000$ and $\| r_i \|_\mathcal{H} \approx 10^{-6}$ for $Re=2000$ in $\ell=2$.
ContSS-RR achieves almost the same accuracy as contFEAST; on the other hand, SS-Hankel and contSS-CAA are less accurate than contFEAST and contSS-RR.
\par
Next, we discuss the elapsed times of the methods (Fig.~\ref{fig:ex3_time}).
For the complex moment-based methods, most of the elapsed time is spent on solving the ODEs.
The total elapsed time of contFEAST increases in proportion to the number of iterations $\ell$.
Although contSS-RR and contSS-CAA account for larger portions of elapsed time for orthonormalization of the basis functions of $\mathscr{R}(\widehat{S})$ because they use a larger dimensional subspace (Section~\ref{sec:adv_contFEAST}), the proposed methods exhibit much less total elapsed times than contFEAST.
The proposed methods are over eight times faster than contFEAST with $\ell=3$ for real problems and over four times faster than contFEAST with $\ell=2$ for complex problems, while maintaining almost the same high accuracy.
\par
We also compare the performance of the proposed methods with that of the ``eigs'' function in Chebfun.
As shown in Fig.~\ref{fig:ex3_time}, the ``eigs'' function is much faster than the proposed methods and contFEAST.
On the other hand, Figs.~\ref{fig:ex3_RE_res} and \ref{fig:ex3_CO_res} show that the ``eigs'' function exhibits significant losses of accuracy in several cases ($\| r_i \|_\mathcal{H} \approx 10^{-5}$ for the Bessel eigenvalue problem, $\| r_i \|_\mathcal{H} \approx 10^{-4}$ for the Orr--Sommerfeld eigenvalue problems with $Re=1000$, and $\| r_i \|_\mathcal{H} \approx 10^{-2}$ for the Orr--Sommerfeld eigenvalue problems with $Re=2000$) and is unrobust in accuracy relative to the complex moment-based methods.
\subsection{Experiment IV: parallel performance}
	\label{sec:parallel}
	As demonstrated in Section~4.3, the most time-consuming part of the complex moment-based methods is the solutions of $LN$ ODEs \eqref{eq:ode}.
	Since these $LN$ ODEs can be solved independently, the methods are expected to have high parallel performance.
	\par
	Here, we estimated the strong scalability of the methods by using the following performance model.
	We assume that the elapsed time $T_\textrm{ODE}^{(j)}$ for solving ODEs \eqref{eq:ode} depends on the quadrature point $z_j$ but is independent of the right-hand side $\mathcal{B} v_i$.
	We also assume that the elapsed time~$T_\textrm{QP}$ for other computation at each quadrature point is independent of the quadrature point $z_j$.
	In addition, we let $T_\textrm{other}$ be the elapsed time for computation of other parts in each method, respectively.
	The $LN$ ODEs are solved in parallel by $P$ processes, computations at quadrature points are parallelized in $\min(P,N)$ processes, and other parts are computed in serial.
	\par
	Then, using the measured elapsed times $T_\textrm{ODE}^{(j)}, T_\textrm{QP}$, and $T_\textrm{other}$, we estimate the total elapsed time $T_\textrm{total}(P)$ of each method in $P$ processes as
	\begin{equation*}
		T_\textrm{total}(P) = \left\{
		\begin{array}{ll}
			\displaystyle
			\max_{p=1,2,\ldots,P} \ell \left( \sum_{j \in \mathscr{J}_p} L T_\textrm{ODE}^{(j)} + T_\textrm{QP} \right) + T_\textrm{other} & \quad (P \leq N), \\ \\
			\displaystyle
			\max_{j=1,2,\ldots,N} \ell \lceil LN/P \rceil T_\textrm{ODE}^{(j)} + T_\textrm{QP} + T_\textrm{other} & \quad (P > N),
		\end{array}
		\right.
	\end{equation*}
	where $\mathscr{J}_p$ is the index set of quadrature points equally assigned to each process $p$ and $\lceil \cdot \rceil$ denotes the ceiling function.
	\begin{figure}[t]
		\centering
		\begin{minipage}{0.49\hsize}
			\centering
			\includegraphics[width=0.8\textwidth]{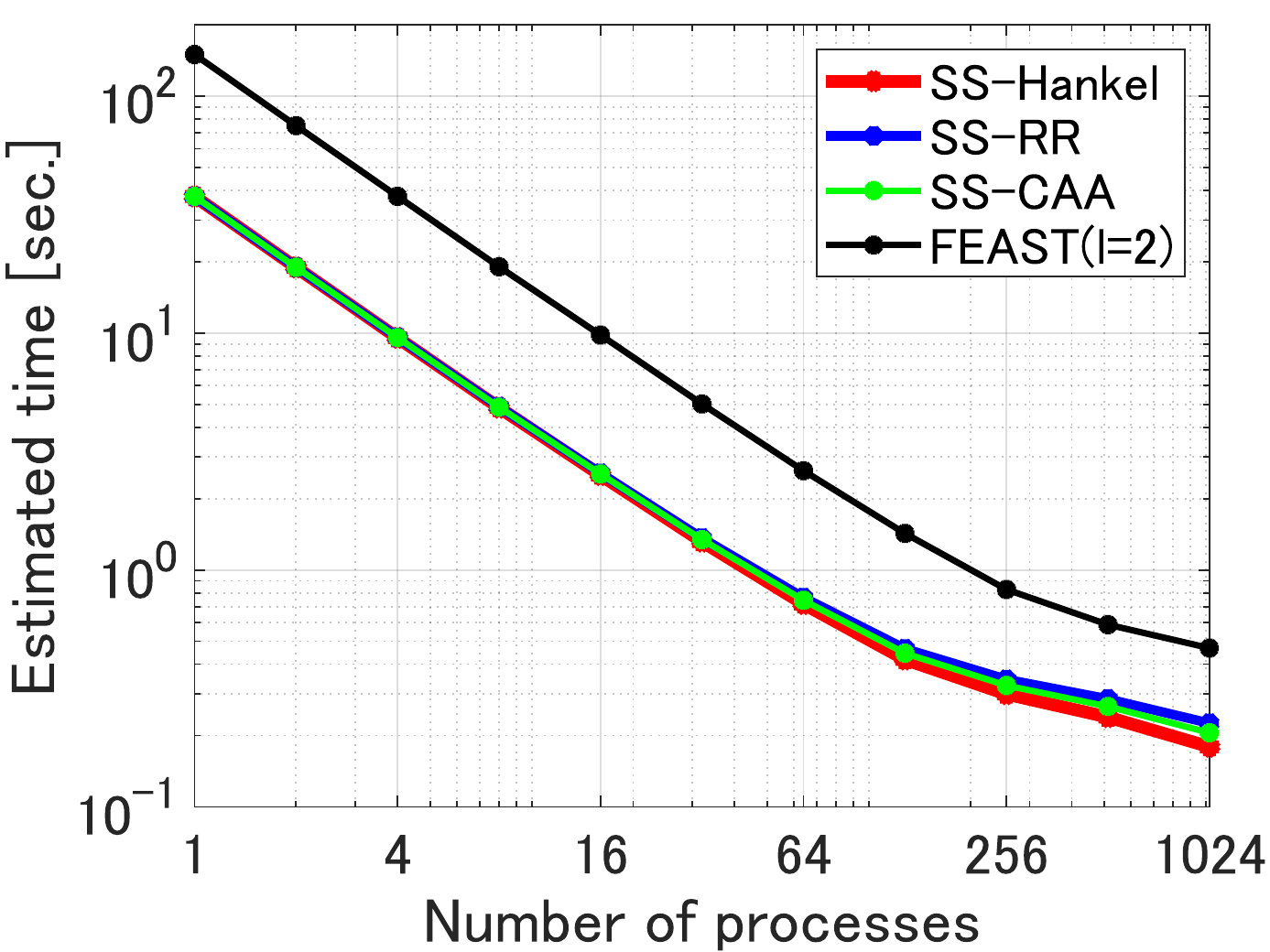}
			\subcaption{Estimated elapsed time}
		\end{minipage}
		\begin{minipage}{0.49\hsize}
			\centering
			\includegraphics[width=0.8\textwidth]{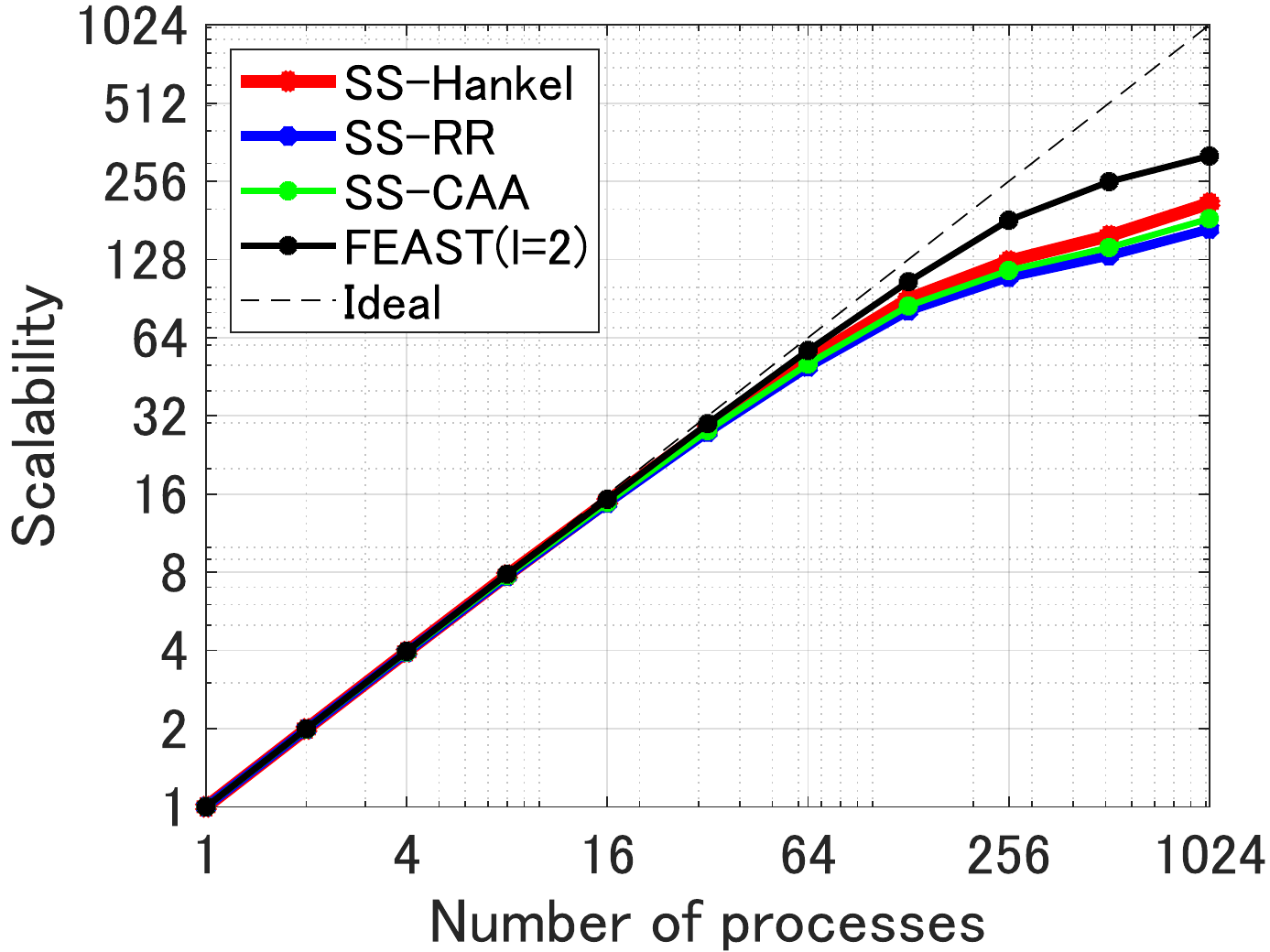}
			\subcaption{Scalability}
		\end{minipage}
		\caption{Estimated time and strong scalability for Orr--Sommerfeld eigenvalue problem with $Re=2000$.}
		\label{fig:scalability}
	\end{figure}
	\par
	We estimated the strong scalability of methods for solving the Orr--Sommerfeld eigenvalue problem with $Re=2000$.
	We used the same parameter values as in Section~4.3.
	Fig.~\ref{fig:scalability} shows the estimated time and strong scalability of methods.
	This result demonstrates that all the methods exhibit highly parallel performance.
	The proposed methods, especially contSS-Hankel, are much faster than contFEAST even with a large number of processes $P$, although contFEAST shows slightly better scalability than the proposed methods.
\subsection{Experiment V: performance for partial differential operators}
\label{sec:partial_differential}
The complex moment-based methods can be extended to partial differential operators in a straightforward manner in which $L$ PDEs are solved regarding each quadrature point.
\par
Here, we evaluate the performances of the proposed methods without iteration ($\ell=1$) and compare them with that of contFEAST for two real self-adjoint problems:
\begin{itemize}
	\item 2D Laplace eigenvalue problem:
	\begin{equation*}
		- \frac{\hbar}{2m} \left( \frac{ \partial^2 }{ \partial x^2 } + \frac{ \partial^2 }{ \partial y^2 } \right) u = \lambda u
	\end{equation*}
	in a domain $[0,\pi] \times [0,\pi]$ with zero Dirichlet boundary condition.
	The true eigenvalues are $i_x^2 + i_y^2$ with $i_x, i_y  \in \mathbb{Z}_+$.
	We computed 4 eigenpairs, counting multiplicity, corresponding to $\lambda_i \in [0, 9]$.
	Note that the target eigenvalues are $2, 5$, and $8$, where the eigenvalue $5$ has multiplicity $2$.
	\item 2D Schr\"{o}dinger eigenvalue problem:
	\begin{equation*}
		\left[ - \frac{\hbar}{2m} \left( \frac{ \partial^2 }{ \partial x^2 } + \frac{ \partial^2 }{ \partial y^2 } \right) + V(x,y) \right] u = \lambda u
	\end{equation*}
	\begin{figure}[!t]
		\centering
		%	\begin{table}		
			\captionof{table}{True and obtained eigenvalues of the contSS-RR method for the 2D-Laplace eigenvalue problem.}
			\begin{tabular}{crc} 
				\toprule
				True eigenvalue & \multicolumn{1}{c}{Obtained eigenvalue} & Absolute error \\ \cmidrule{1-3}
				2.0 &  1.999999999999963 & $3.73 \times 10^{-14}$ \\
				5.0 &  4.999999999999198 & $8.02 \times 10^{-13}$ \\
				5.0 &  4.999999999999917 & $8.34 \times 10^{-14}$ \\
				8.0 &  7.999999999999917 & $8.34 \times 10^{-14}$ \\
				\bottomrule
			\end{tabular}
			\label{table:Laplace_2d_eig}
			%	\end{table}
		\vspace{1em}
		\begin{subfigure}{\textwidth}
			\centering
			\hfill
			\includegraphics[width=0.4\linewidth]{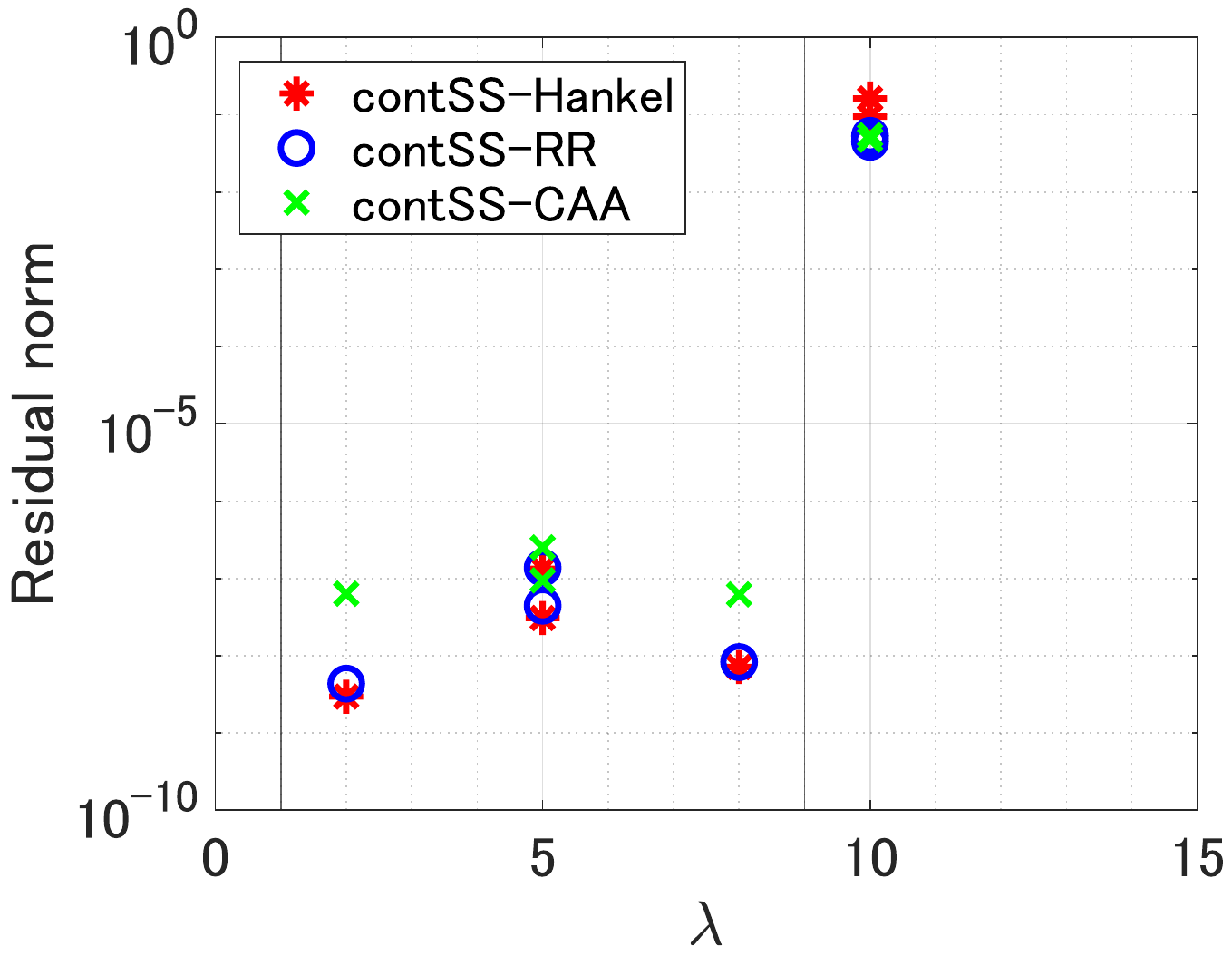}
			\hfill
			\hfill
			\includegraphics[width=0.4\linewidth]{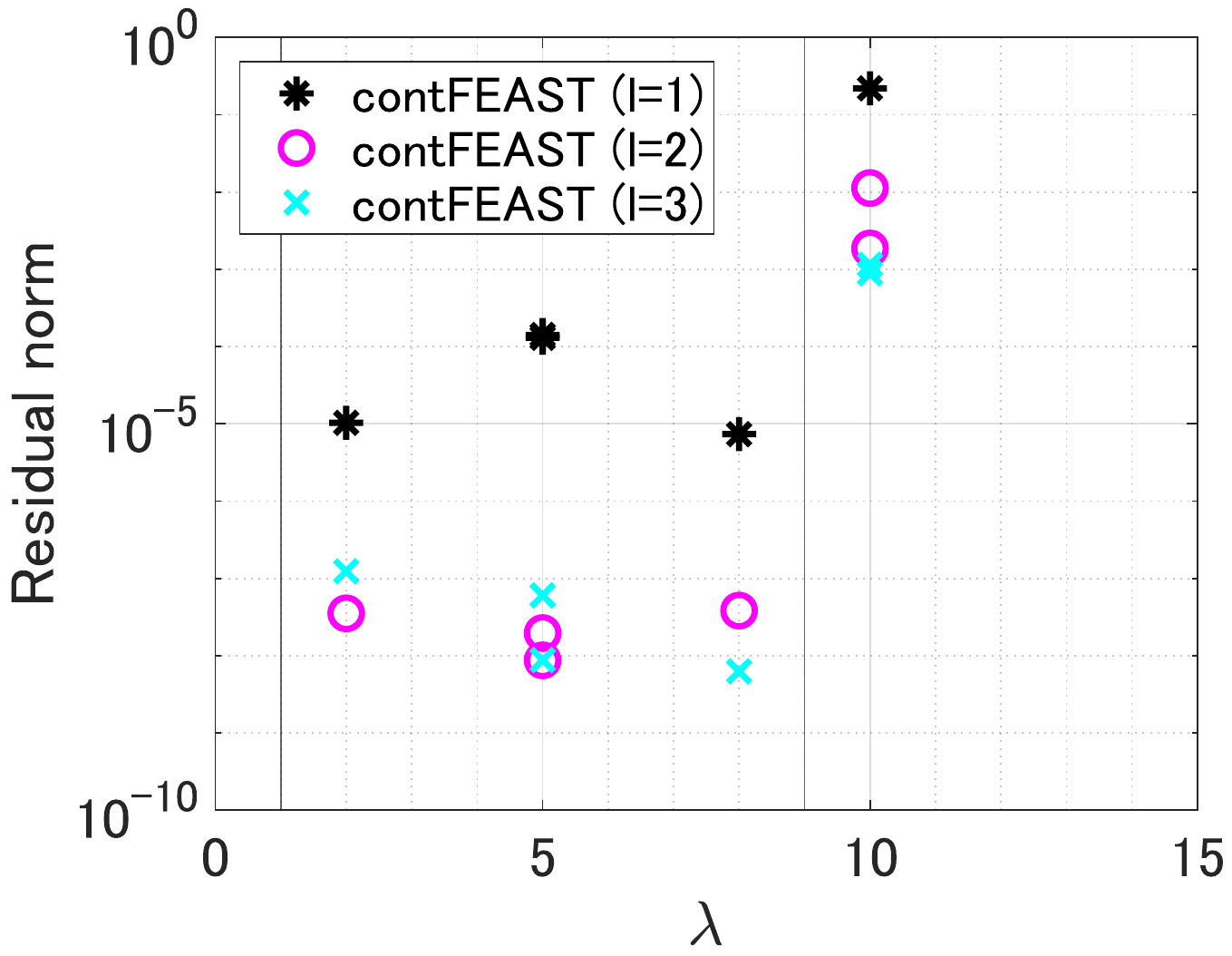}
			\hfill
			\caption{2D Laplace eigenvalue problem}
		\end{subfigure}
		\begin{subfigure}{\textwidth}
			\centering
			\hfill
			\includegraphics[width=0.4\linewidth]{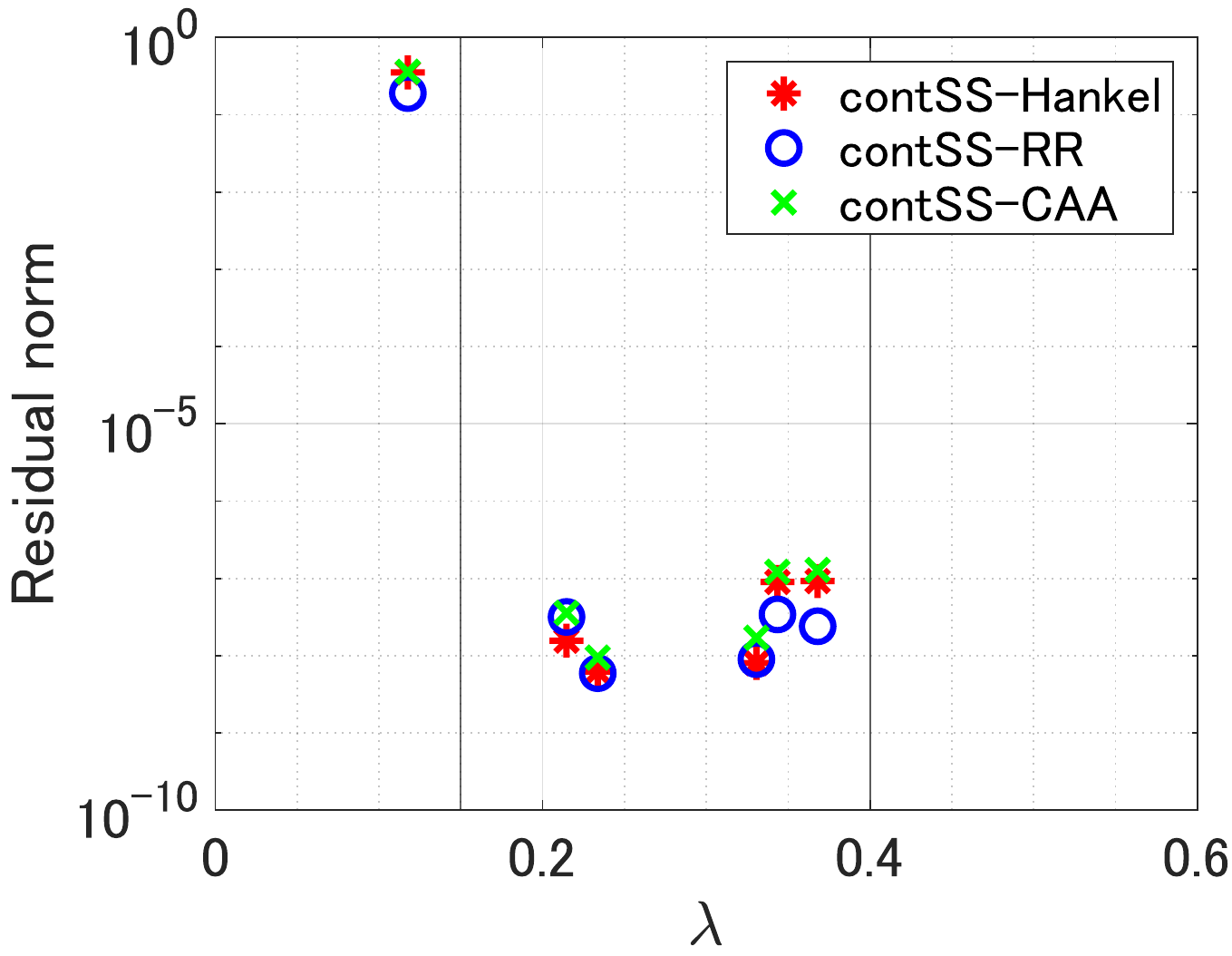}
			\hfill
			\hfill
			\includegraphics[width=0.4\linewidth]{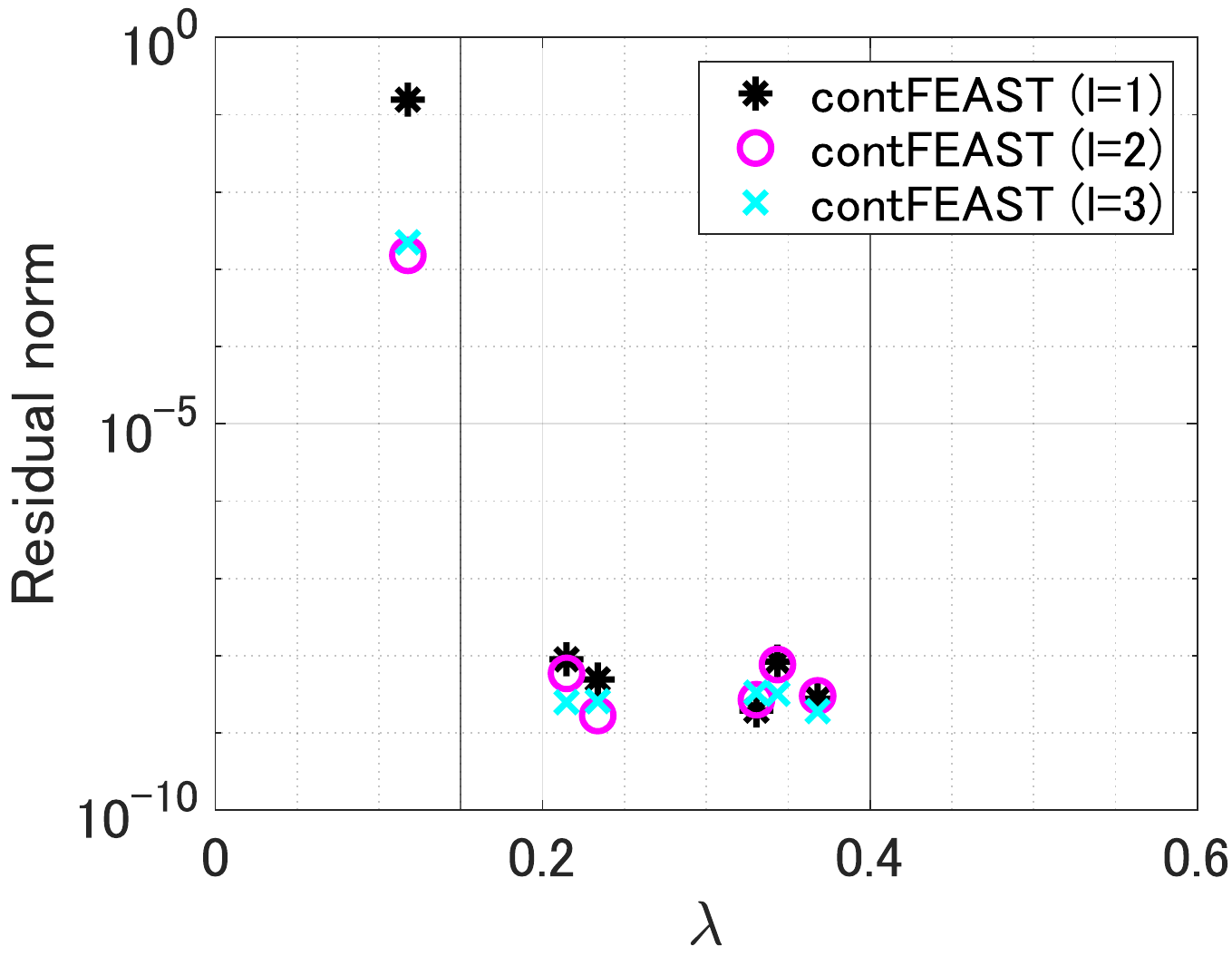}
			\hfill
			\caption{2D Schr\"{o}dinger eigenvalue problem}
		\end{subfigure}
		\caption{Residual norm for 2D problems.}
		\label{fig:ex4_res}
	\end{figure}
	in a domain $[-1,1] \times [-1,1]$ with a potential $V(x) = 0.1 (x+0.4)^2 + 0.1 (y-0.8)^2$ and zero Dirichlet boundary condition, where we set $\hbar/2m = 0.01$.
	We computed 5 eigenpairs corresponding to $\lambda_i \in [0.15,0.4]$.
\end{itemize}
For both problems, we set $(L,M,N)=(2,4,24)$ for the proposed methods and $(L,N) = (6,24)$ for contFEAST.
\par
Table~\ref{table:Laplace_2d_eig} gives the obtained eigenvalues of contSS-RR for the 2D Laplace eigenvalue problem.
In addition, residual norms $\| r_i \|_\mathcal{H} = \| \mathcal{A} \widehat{u}_i - \widehat{\lambda}_i \mathcal{B} \widehat{u}_i \|_\mathcal{H}$ for each problem are presented in Fig.~\ref{fig:ex4_res} and the elapsed times for each problem are presented in Fig.~\ref{fig:ex4_time}.
\par
We observed from Table~\ref{table:Laplace_2d_eig} and Fig.~\ref{fig:ex4_res} that, as in the case of ordinary differential operators, the proposed methods work well for solving DEPs with partial differential operators even for a non-simple case (2D-Laplacian eigenvalue problem).
In addition, the proposed methods exhibit much lower elapsed times than contFEAST; see Fig.~\ref{fig:ex4_time}, although the elapsed times for orthonormalization of the column vectors of $\widehat{S}$ and construction of the matrix eigenvalue problem are relatively larger than the cases of ordinary differential operators in Section 4.3.
\begin{figure}[t]
	\centering
	\begin{minipage}{0.49\hsize}
		\centering
		\includegraphics[width=0.8\textwidth]{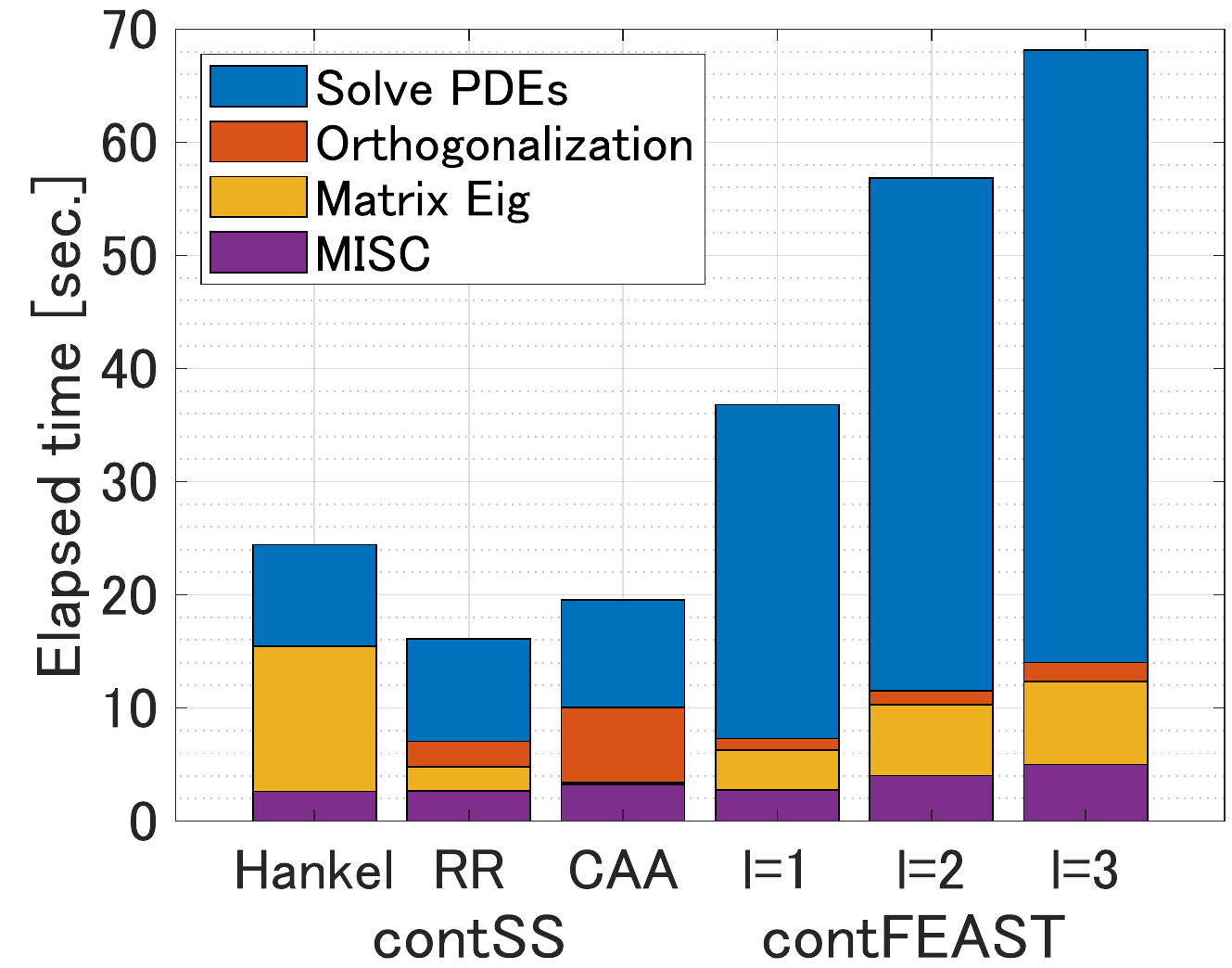}\\
		\subcaption{2D Laplace eigenvalue problem}
	\end{minipage}
	\begin{minipage}{0.49\hsize}
		\centering
		\includegraphics[width=0.8\textwidth]{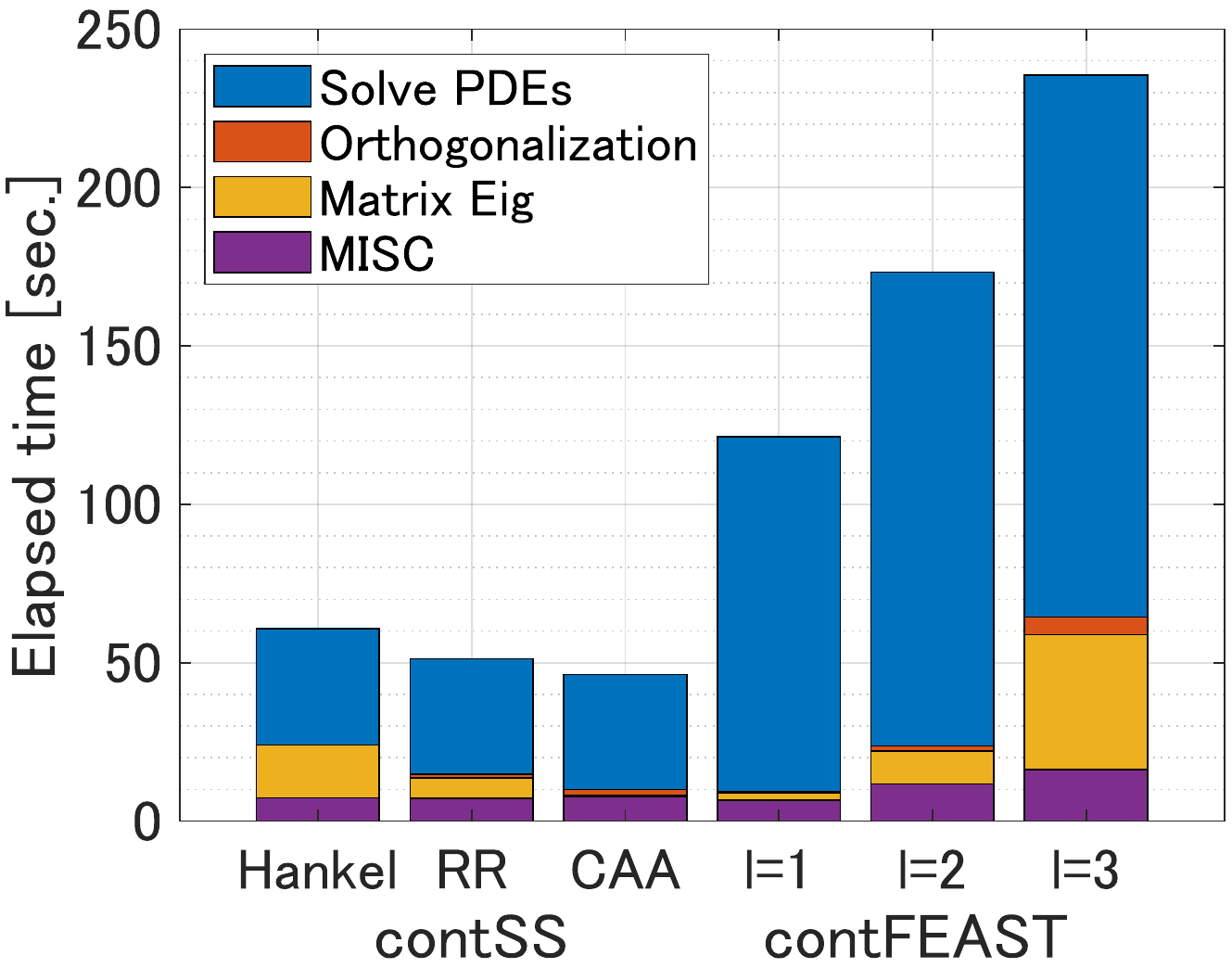}\\
		\subcaption{2D Schr\"{o}dinger eigenvalue problem}
	\end{minipage}
	\caption{Elapsed time for 2D problems.}
	\label{fig:ex4_time}
\end{figure}
\subsection{Summary of numerical experiments}
From the numerical experiments, we observed the following:
\begin{itemize}
	\item As well as contFEAST, the proposed methods in the ``solve-then-discretize'' paradigm exhibit a much higher accuracy than the ``discretize-then-solve'' approach for solving DEPs \eqref{eq:gep}.
	\item Using higher-order complex moments improves the accuracy as well as increasing the number of input functions $L$.
	\item Thanks to the higher-order complex moments, the proposed methods are over eight times faster for real problems and more than four times faster for complex problems compared with contFEAST while maintaining almost the same high accuracy.
\end{itemize}
\section{Conclusion}
\label{sec:conclusion}
In this paper, based on the ``solve-then-discretize'' paradigm, we propose operation analogues of the Sakurai--Sugiura's approach, contSS-Hankel, contSS-RR, and contSS-CAA, for DEPs \eqref{eq:gep}, without discretization of operators $\mathcal{A}$ and $\mathcal{B}$.
Theoretical and numerical results indicate that the proposed methods significantly reduce the number of ODEs to solve and elapsed time by using higher-order complex moments while maintaining almost the same high accuracy as contFEAST.
\par
As well as contFEAST, the proposed methods based on the ``solve-then-discretize'' paradigm exhibit much higher accuracy than methods based on the traditional ``discretize-then-solve'' paradigm.
This study successfully reduced the computational costs of contFEAST and is expected to promote the ``solve-then-discretize'' paradigm for solving differential eigenvalue problems and contribute to faster and more accurate solutions in real-world applications.
\par
This paper did not intend to investigate a practical parameter setting, rounding error analysis and parallel performance evaluation.
In future, we will develop the proposed methods and evaluate the parallel performance specifically for higher dimensional problems.
Furthermore, based on the concept in \cite{imakura2020verified}, we will rigorously evaluate the truncation error of the quadrature and numerical errors in the proposed methods and investigate a verified computation method based on the proposed methods for differential eigenvalue problems.
\section*{Acknowledgements}
This work was supported in part by the Japan Society for the Promotion of Science (JSPS), Grants-in-Aid for Scientific Research (Nos. JP18K13453, JP19KK0255, JP20K14356, and JP21H03451).

\bibliographystyle{siamplain}
\bibliography{mybibfile}
\end{document}